\newtheorem{lem}{Lemma}[section]
\newtheorem{cor}[lem]{Corollary}
\newtheorem{prop}[lem]{Proposition}
\newtheorem{thm}[lem]{Theorem}
\theoremstyle{definition}
\newtheorem{defn}[lem]{Definition}
\newtheorem{exam}[lem]{Example}
\numberwithin{equation}{section}
\numberwithin{figure}{section}
\renewcommand{\phi}{\varphi}                 
\renewcommand{\epsilon}{\varepsilon}
\newcommand\eps{\varepsilon}
\newcommand\eset{\varnothing}
\newcommand\setm{\smallsetminus}
\newcommand\inv{^{-1}}
\newcommand\trans{^{\text{\rm T}}}
\newcommand\textb{\text{\rm b}}
\newcommand\pib{\pi_{\textb}}
\newcommand\chib{\chi^{\textb}}
\newcommand\Lat{\operatorname{Lat}}
\newcommand\Latb{\operatorname{\Lat^{\textb}}}
\newcommand\clos{\operatorname{clos}}
\newcommand\bcl{\operatorname{bcl}}
\newcommand\rk{\operatorname{rk}}
\newcommand\codim{\operatorname{codim}}
\newcommand\nul{\operatorname{nul}}
\newcommand \Diag{\operatorname{Diag}}
\newcommand\cA{\mathcal{A}}
\newcommand\cB{\mathcal{B}}
\newcommand\cD{\mathcal{D}}
\newcommand\cJ{\mathcal{J}}
\newcommand\cL{\mathcal{L}}
\newcommand\cS{\mathcal{S}}
\newcommand\bbA{\mathbb{A}}
\newcommand\bbP{\mathbb{P}}
\newcommand\bbR{\mathbb{R}}
\newcommand\bbZ{\mathbb{Z}}
\newcommand \fG{\mathfrak G}
\newcommand\Ups{\Upsilon}
\newcommand\G{\Gamma}
\newcommand\s{\sigma}
\renewcommand\S{\Sigma}
\newcommand\Ends{\operatorname{Ends}}
\newcommand\K{\mathbf{K}}
\newcommand\e{\mathbf{b}}
\newcommand\bfa{\mathbf{a}}
\newcommand\bfy{\mathbf{y}}
\newcommand\bfz{\mathbf{z}}
\newcommand\bfF{\mathbf{F}}
\newcommand\bfL{\mathbf{L}}
\newcommand\M{\mathbf{M}}
\newcommand\0{\mathbf{0}}
\newcommand\h{\mathbf{h}}
\newcommand\x{\mathbf{x}}
\newcommand\z{\mathbf{z}}
\newcommand\sectionpage{\newpage}
\begin{document}
\allowdisplaybreaks

\title[Gain Signed Graphs]{Matroids of Gain Signed Graphs}
\author[Anderson, Su, Zaslavsky]{Laura Anderson, Ting Su, and Thomas Zaslavsky}
\address{Department of Mathematics \\ Binghamton University \\ 
Binghamton, New York, U.S.A. 13902-6000}
\address{Present affiliation of Ting Su: Changjiang Geophysical Exploration and Testing Co., China}
\email{laura@math.binghamton.edu, tsu2@binghamton.edu, zaslav@math.binghamton.edu}

\begin{abstract}
A signed graph has edge signs.  A gain graph has oriented edge gains drawn from a group.  We define the combination of the two for the abelian case, in which each oriented edge of a signed graph has a gain from an abelian group, concentrating on the case of the additive group of a field.  
We develop the elementary graph properties, the associated matroid, and the vector and hyperplanar representations.
\end{abstract}

\subjclass[2010] {Primary 05C22; Secondary 05B35, 52C35.}

\keywords{abelian gain signed graph, matroid, affinographic hyperplane arrangement, edge polytope, adjacency polytope, arc polytope}

\date{\today}

\maketitle

\vspace{-.6cm}
\setcounter{tocdepth}{3}
\tableofcontents

\sectionpage
\section{Introduction}\label{sec:intro}

An \emph{arrangement of hyperplanes} is a finite set of hyperplanes in some linear, affine, or projective space.  Hyperplane arrangements formed a rather obscure corner of mathematics as recently as Gr\"unbaum's famous book of 1967 \cite{Grunbaum}, in which they were the subject of a chapter, but subsequently they have expanded into many parts of combinatorics, combinatorial geometry, algebraic geometry, algebra, and even analysis.  A fundamental combinatorial object associated to an arrangement is its intersection semilattice, which consists of all intersections of subarrangements (excluding empty sets in the affine case).  Valuable information about the arrangement can be extracted from this semilattice; in particular, through the characteristic polynomial of the semilattice one obtains the number of regions in the complement of a real arrangement \cite{FUTA} and the cohomology of the complement of a complex arrangement \cite{OT}.

Certain special arrangements were long well known, and have been a basis for this expansion: the hyperplane arrangements $A_{n-1}^*$, $B_n^* = C_n^*$, and $D_n^*$ dual to the classical root systems.  
Greene observed that subarrangements of the first of these correspond to graphs and their matroids.  The hyperplanes have equations of the form $x_j=x_i$, and such a hyperplane corresponds to an edge $v_iv_j$ in the associated graph \cite{Greene}.  We call such a hyperplane \emph{graphic}.  Zaslavsky extended this notion to subarrangements of $B_n^* = C_n^*$ and $D_n^*$, developing a matroid and coloring theory of signed graphs---graphs in which each edge is signed positive or negative \cite{SG}---that facilitates the calculation of the characteristic polynomial of the arrangement \cite{SGC}.  Here hyperplanes have equations of the forms $x_j=\pm x_i$ and $x_i=0$, corresponding to signed edges.  

As time passed new particular arrangements attracted interest.  The Catalan arrangement (arising from combinatorial geometry) has hyperplanes of the form $x_j-x_i=0,\pm1$ for $i<j$; the Shi arrangement (arising from algebraic geometry) has hyperplanes of the form $x_j-x_i = 0, 1$ and the Linial arrangement has hyperplanes $x_j-x_i = 1$, both also for $i<j$.  These examples led to the idea of a \emph{deformation} of a root system arrangement: the hyperplanes may be translated (in multiple ways); for instance, the Catalan, Shi, and Linial arrangements are deformations of the $A_{n-1}^*$ arrangement.  (We call arrangements of hyperplanes of the form $x_j-x_i=c$ \emph{affinographic} since they are affine deformations of graphic arrangements.)  The same idea extends to the other classical root system arrangements, with hyperplanes of the form $x_j\pm x_i=c$, and there are variations such as the threshold arrangement with hyperplanes $x_j + x_i = 1$.

To extract such valuable information as mentioned above one has to calculate the characteristic polynomial.  This can be hard.  Athanasiadis in a major paper \cite{Athan} did that for many examples by using what he called the ``finite field method''.  Given an integral deformation of a rational linear arrangement, one treats it as an arrangement over a finite field of sufficiently large order---though in practice only prime fields are used.  This retains the intersection semilattice and enables one to compute the characteristic polynomial by a counting process.  Since there are infinitely many large primes, with sufficient ingenuity one obtains the polynomial for infinitely many prime arguments, thus determining it.

Meanwhile, Zaslavsky had generalized signed graphs, coloring, their hyperplane representations by subarrangements of $B_n^*$, and their matroids, to gain graphs \cite{BG1, BG2, BG3}.  In a gain graph, the edges of the graph are labelled by the elements of a group, invertibly, which means the group element depends on direction, being inverted when the direction is reversed.  Signed graphs are the case where the group has order 2.  A gain graph has a natural matroid, in fact, two of them, called the frame and lift matroids.  Zaslavsky noticed that the integral affinographic arrangements studied by Athanasiadis---that is, arrangements with equations $x_j-x_i=c$ with $c \in \bbZ$---are hyperplanar representations of gain graphs with gain group $\bbZ^+$, the additive integers, and the lattice structure of the arrangement is given by the lift matroid of the gain graph \cite{BG4, ECR}.  Athanasiadis' finite field method, which in practice uses only prime fields, is really a modular coloring method, in which the gain group is the cyclic group $\bbZ_m$ for all sufficiently large moduli $m$; having a finite group enables the necessary counting process by way of coloring using color set $\bbZ_m$ \cite{ECR}.  This simplifies the theory for all integral affinographic arrangements as there is no need for $m$ to be prime; it also strengthens the conclusions since it is possible, at least in principle, to determine all valid moduli $m$ by knowing the gain graph.  (These improvements do not reduce the need for ingenuity.)  However, it leaves out Athanasiadis' examples with hyperplanes of the form $x_j+x_i=c$ in \cite[Theorems 3.10 \emph{et seq.}\ and 5.4--5.5]{Athan}, because those hyperplanes cannot be treated with gain graphs.  (We call such arrangements \emph{signed affinographic} because they are affine deformations of arrangements that represent signed graphs.)  The problem is that, while $x_j-x_i=c$ corresponds to an ordinary or positive edge with gain $c$, $x_j+x_i=c$ corresponds to a negative edge with gain $c$.  Neither signed nor gain graphs alone can handle this; it calls for a combination.  How to produce the necessary combination remained an unsolved problem for some decades.

That is the problem we solve here.  However, the immediate inspiration was different.  Ohsugi and Hibi were led by an algebraic question to study the edge polytope of a graph \cite{OH1}.  This is the convex hull of vectors $\e_i+\e_j$, one for each edge $v_iv_j$ in the graph.  Part of their work involved finding the affine span of a set of such vectors \cite[Proposition 1.3]{OH1}.  It was clear, from the viewpoint of signed graphs, that the affine span is a signed-graphic property; but what, exactly, is that property?  We quickly realized that it requires a combination of signs and identically-$1$ gains on the edges of the graph.  This gave us the necessary hint, and from that we produced a successful definition of a gain signed graph and (the proof of success) a succession of theorems under the assumption that we have an abelian gain group, such as the additive group of a field.  This paper reports our results.  As a demonstration we prove Ohsugi and Hibi's proposition in our new way in Example \ref{X:edgepolytope} and generalize it to bidirected graphs.

\emph{Terminological note.}  Gain signed graphs are not the same as signed gain graphs.  The inverse order of modifiers corresponds grammatically to the order of structures.  A signed gain graph is a gain graph with added signs; the gain-graph structure does not involve the signs, though the sign structure might depend on the gains.  We believe that trying to define signed gain graphs has previously been the barrier to a successful combination of signs and gains.  It is the realization that the signs influence the gain structure, so one needs gains superimposed on a signed graph, that enabled us to succeed.

\sectionpage
\section{Technical introduction}\label{sec:tech}

The object of study is a \emph{gain signed graph}, which is a triple $\Ups = (\G,\s, \phi)$ where $\G=(V,E)$ is a graph, the \emph{signature} or sign function $\s$ gives each edge an element of the sign set $\{+1,-1\}$, and the \emph{gain function} $\phi$ is an oriented labelling of edges from a group; that is, inverting the edge inverts the gain.  The group will be the additive group of a field $\K$ until at the end we retroactively generalize in Section \ref{sec:abstract}.  All these terms must be explained further.

We assume acquaintance with the basics of matroid theory, such as in the beginning of Oxley \cite{Oxley}.  As there are inconsistencies amongst common usage in graph theory, usage in matroid theory, and our special needs, we summarize some of our vocabulary here.  The definitions will follow in this section.

\begin{enumerate}[\qquad ]
\item \emph{Graph}: it may have links, loops, and half and loose edges, as well as multiple edges.  We write $n:=|V|$.
\item \emph{Circle}: a connected, 2-regular subgraph or its edge set.
\item \emph{Circuit}: a matroid circuit. 
\item \emph{Sign circuit}: a circuit of the frame matroid $\bfF(\G,\s)$.  Similarly for cocircuits.
\item \emph{Sign cycle}: a sign circuit oriented to have no source or sink.
\item $\bfF(\S)$: the frame matroid of a signed graph $\S$.
\item A sign circuit is \emph{neutral} in $\Ups$ if it has gain 0.
\item $\M(\Ups)$: the matroid of the gain signed graph $\Ups$.
\item $\M_\infty(\Ups)$: the extended matroid of $\Ups$, with extra element $e_\infty$.
\item \emph{Hypercircuit, hypercocircuit}: a circuit or cocircuit of the matroid $\M(\Upsilon)$ or $\M_\infty(\Ups)$, as appropriate.
\item $\Lat M$: the geometric lattice of a matroid $M$.
\end{enumerate}

\subsection{Graphs}\label{sec:graph}\

The graph $\G$ always means $(V,E)$ whose vertex set is $V=V(\G)=\{v_1,v_2,\ldots,v_n\}$, with $n:=|V|$.  
Our notion of a graph is unusual because we allow four kinds of edge: links, loops, half edges, and loose edges.  Multiple edges are allowed.  A link or loop has two \emph{ends}, each of which is incident with one vertex, called an endpoint of the edge.  (Note that an endpoint is a vertex while an end is part of an edge.)  The edge is a link if these two end vertices are distinct and a loop if they coincide.  A half edge has one end, which is incident with one vertex, and a loose edge has no ends.  In notation we often write $e_{vw}$ or $e_{ij}$ to indicate the endpoints of a link or loop $e$, $e_v$ or $e_i$ for a half edge $e$, and $e_\eset$ for a loose edge (although this notation does not distinguish parallel edges, which have the same endpoints).  
We write $E^2$ for the set of links and loops, $E^0$ for that of loose edges, and $E^1$ for that of half edges.  
For an end of edge $e$ incident with a vertex $v$ we write $(v,e)$; this notation is technically ambiguous for a loop but that will rarely cause difficulty.  We write $\Ends(\G)$ for the set of edge ends of $\G$.  

A \emph{simple graph} is a graph that has only links and has no parallel edges.

The \emph{components} of $\G$ are the usual connected components, not including loose edges.  More precisely, they are the \emph{vertex components} of $\G$.  The \emph{edge components} are each loose edge and the vertex components that are not isolated vertices.

A \emph{circle} is a connected 2-regular graph, or its edge set (widely known as a ``cycle'' or ``circuit'', but we have different uses for those names).  A \emph{unicycle} is a circle or half edge that may have trees attached.  A \emph{handcuff} is a pair of circles with at most one common vertex together with a minimal connecting path (of length 0 if there is a common vertex).

A \emph{cut} is a non-empty set that consists of all edges between a subset of $V$ and its complement.  A minimal cut is a \emph{bond}.  The two components of the bond-deleted graph that are joined by the bond are its \emph{sides}.  

The edge set induced by $X \subseteq V$ is denoted by $E{:}X$; the induced subgraph is denoted by $\G{:}X$.  Suppose $\pi$ is a partition or partial partition of $V$ (that is, a partition of a subset); then the partition-induced subgraph is denoted by $\G{:}\pi = (\bigcup\pi, E{:}\pi) := \bigcup_{B\in\pi} \G{:}B$.  

The \emph{restriction} of $\G$ to an edge set $S$ is the spanning subgraph $\G|S := (V,S)$.  
The partition of $V$ induced by the components of $\G$ is $\pi(\G)$; for an edge set $S \subseteq E$ the induced partition of $V$ is $\pi(S) := \pi(\G|S) = \pi(V,S)$.  The number of components of $\G$ is $c(\G) = |\pi(\G)|$.  
The set of vertices of edges of $S$ is $V(S)$, and $c(S)$ is the number of components of the spanning subgraph $\G|S$.  The \emph{cyclomatic number} of $S$ is 
$$
\xi(S) = |S| - |V| + c(S) = |S| - |V(\G|S)| + c(\G|S).
$$
It is the number of edges that must be deleted in order to eliminate all circuits of the graphic matroid; this includes loose edges.

For a walk, say $W=u_0e_1u_1e_2\cdots e_lu_l$ (where the length $l\geq0$), we write $W_{ij}$ to denote that part of $W$ beginning at $u_i$ and ending at $u_j$.  The \emph{reverse} of that partial walk is $W_{ji}=W_{ij}\inv$.  In particular, $W=W_{0l}$ and $W\inv = W_{l0}$.  The direction of a walk $W$ is indicated by the ordered pair of initial and final vertices; note that this concept of direction is not the one common in directed graph theory and is different from the concept of orientation in Section \ref{sec:orient}.  

For sign circuits (to be defined shortly) we sometimes need an extension of the notion of walk.  An \emph{ultrawalk} is like a walk except that it may begin or end with a half edge.  For example, $W=e_0u_0e_1u_1e_2\cdots e_lu_l$ is an ultrawalk if $e_0$ is a half edge incident with $u_0$.

\subsection{Signed graphs}\label{sec:sg}\

A \emph{signed graph} $\S=(\G,\s)$ consists of a graph $\G$ and a function $\s : E \to \{+1,-1\}$, the \emph{signature} or \emph{sign function}, that is defined on all edges (this differs from the convention in previous articles such as \cite{SG}).  Links and loops may be positive or negative.  Half edges must be negative and loose edges must be positive.  The set of edges with sign $\eps$ is $E^\eps$.
A simple graph $\G$ gives rise to a signed graph in several ways: we may give all edges the same sign $\eps$ and we write that signed graph $\eps\G$, or we may double the edges with both signs, which is the graph $\pm\G = (+\G) \cup (-\G)$, which is the union of two edge-disjoint signed graphs on the same vertex set $V=V(\G)$.

We apply most graph notations to signed graphs; e.g., the restriction of a signed graph is $\S|S := (\G|S, \s|_S)$, more simply $(\G|S,\s)$.  

The sign of an edge set $S$ is $\s(S) :=$ the product of the signs of edges in $S$.  This is important for a circle $C$, which is either positive or negative.  If an edge set $S \subseteq E$ contains no negative circles and no half edges, it is \emph{sign balanced}, and similarly for a subgraph.  We write $b_\S(S)$ for the number of connected components of $S$, or more precisely of $\S|S$, that are sign balanced.

A signed graph $\S$ is \emph{sign antibalanced} if its negative, $-\S$, is balanced.

The sign of a walk, say $W=u_0e_1u_1e_2\cdots e_lu_l$, is $\s(W) := \prod_{i=1}^l \s(e_i)$.  Note that the sign of a walk is not necessarily the same as the sign of its edge set since the walk may repeat edges; but the two definitions do agree for a circle.  
The same definition gives the sign of an ultrawalk: it is the product of the signs of its edges, accounting for multiplicity (recall that half edges are negative).

The \emph{frame matroid} $\bfF(\S)$ (\cite{SG, BG2}; called the bias matroid in \cite{BG2}) is a matroid whose ground set is $E$ and whose circuits are the sign circuits of $\S$.  To define a sign circuit, first we define a \emph{negative figure}: it is any negative circle or half edge.  A \emph{sign circuit} is an edge set that is either a positive circle, or a loose edge, or a pair of negative figures that have exactly one common vertex (a \emph{tight handcuff}), or a pair of disjoint negative figures together with a minimal connecting path (a \emph{loose handcuff})---minimal in the sense that it intersects the two circles only at its endpoints; the length is immaterial.  
(We view a tight handcuff as having a connecting path of length $0$.)  
In this definition any negative circle can be replaced by a half edge; thus we call a \emph{negative figure} any negative circle or half edge.  
The frame matroid's rank function is $\rk_\S(S) = n - b_\S(S)$.  

\begin{figure}[htbp]
\includegraphics[scale=.7]{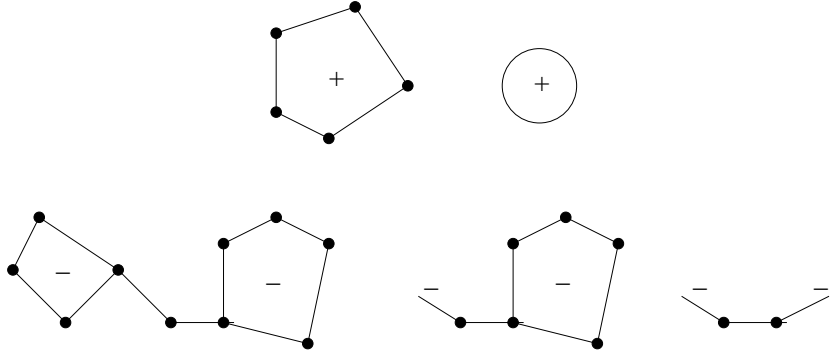}
\caption{The two different varieties of sign circuit.  In the first row, sign balanced, showing a positive circle and a (positive) loose edge.  In the second row, sign unbalanced, showing a handcuff with two negative circles, one such circle and one (negative) half edge, and two half edges; the connecting path may have length $0$.}
\label{F:signcircuit}
\end{figure}

A fundamental operation on signed graphs is \emph{switching} (which we call \emph{sign switching} in the context of gain signed graphs).  A \emph{switching function}, briefly \emph{switcher}, is any vertex function $\zeta: V \to \{\pm1\}$.  Switching $\S$ by $\zeta$ means changing the signs of links and loops: $\S = (\G,\s)$ becomes $\S^\zeta = (\G,\s^\zeta)$ whose sign function is $\s^\zeta(e_{vw}) := \zeta(v)\s(e)\zeta(w)$.  The signs of loose and half edges do not change.  Switching does not change the signs of closed walks and, most importantly, of circles.
A signed graph is balanced or antibalanced if and only if it can be sign switched to all positive or all negative \cite{SG}.

A switcher can also be regarded as a way of signing edges.  We call $\zeta$ a \emph{sign potential} for an edge set $S$ if $\s(e_{vw}) = \zeta(v)\inv\zeta(w)$ for every edge in $S$.  (The inversion exists to suit the common notion of a potential, although for signs it is unnecessary.)  For a subset $S \subseteq E^2$ this is equivalent to $\s^\zeta|_S$ being identically $+1$.

\begin{lem}[\cite{SG}]\label{L:signswitching}
A signed graph is balanced if and only if it switches to all positive.
\end{lem}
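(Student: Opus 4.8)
The plan is to prove the two implications separately; the forward one is almost immediate, and the reverse one is the classical spanning-tree argument. For the forward direction, suppose $\S^\zeta$ has every link and loop positive for some switcher $\zeta$. Sign switching changes neither the sign of any circle nor a half edge (which remains negative), so if $\S$ contained a negative circle or a half edge, then $\S^\zeta$ would too, contradicting that $\S^\zeta$ is all positive. Hence $\S$ has no negative circle and no half edge, i.e.\ $\S$ is balanced.

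For the reverse direction, suppose $\S=(\G,\s)$ is balanced, so it has no half edges and every circle is positive. Since loose edges are already positive and are untouched by switching, and since one may switch each vertex component independently, I would reduce to the case that $\G$ is connected. Then I would fix a root $v_0$ and a spanning tree $T$ and define $\zeta\colon V\to\{\pm1\}$ by letting $\zeta(v)$ be $\s$ of the unique $v_0$--$v$ path in $T$, with $\zeta(v_0)=+1$. The crux is to show $\zeta$ is a sign potential for all of $E$: that $\s(e_{vw})=\zeta(v)\zeta(w)$ for every link or loop $e_{vw}$. Granting this, $\s^\zeta(e_{vw})=\zeta(v)\s(e)\zeta(w)=\zeta(v)^2\zeta(w)^2=+1$ (signs are self-inverse) and there is nothing else to check, so $\S^\zeta$ is all positive.

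To establish that claim I would argue edge by edge. For a tree edge it is immediate from the definition of $\zeta$, since extending the $v_0$--$v$ path by the edge multiplies its sign into $\zeta(w)$. For a non-tree link or loop $e_{vw}$, let $P$ be the $v$--$w$ path in $T$; then $e\cup P$ is a circle, hence positive by balance, so $\s(e)\s(P)=+1$. It then remains to identify $\s(P)$ with $\zeta(v)\zeta(w)$: the closed-walk detour from $v$ back to $v_0$ and then out to $w$ has sign $\s(P_v)\s(P_w)=\zeta(v)\zeta(w)$, and it differs from $P$ only by traversing the shared initial segment once in each direction, which contributes $\s(\cdot)^2=+1$ at each edge of that segment, so the walk's sign equals $\s(P)$. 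Hence $\s(e)=\s(P)\inv=\zeta(v)\zeta(w)$, as wanted (loops appear as the degenerate case $v=w$ with $P$ trivial).

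I do not expect a genuine obstacle, only bookkeeping. The one delicate point is the standing distinction from Section~\ref{sec:sg} between the sign of a walk and the sign of its edge set: it is precisely because a repeated edge contributes $\s(\cdot)^2=1$ to a walk's sign that the back-and-forth over the shared tree segment cancels. Once that is handled, loops (as length-$1$ circles), parallel edges, and the decomposition into vertex components all fall under the same argument without separate treatment.
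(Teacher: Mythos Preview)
The paper does not supply its own proof of this lemma; it is stated with a citation to \cite{SG} and left at that (the only accompanying remark is the observation that, under the paper's conventions, ``all positive'' forces the absence of half edges). Your argument is correct and is precisely the classical spanning-tree proof one finds in \cite{SG}: define $\zeta$ by tree-path signs from a root, so tree edges switch to positive automatically, and invoke balance on the fundamental circle of each non-tree edge to get $\s(e)=\zeta(v)\zeta(w)$. Your handling of the walk-versus-edge-set sign distinction for the detour through the root is the right bookkeeping, and your treatment of half edges in the forward direction matches the paper's note. There is nothing to compare beyond saying you have reproduced the standard proof that the paper chose to cite rather than include.
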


Note that, by our sign conventions, being all positive implies having no half edges.

There are two basic ways to make a signed graph $\S$ balanced.  One can delete some edge set $D$ whose complement is balanced---then $D$ is called a \emph{deletion set} for $\S$; or one can negate the signs of some edge set $N$ to obtain balance---then $N$ is called a \emph{negation set}.  Every negation set is obviously a deletion set, but not every deletion set is a negation set.  However, the minimal deletion and negation sets are the same.  A simple property of a minimal deletion (or negation) set is this:

\begin{lem}\label{L:sbalset}
Let $\S$ be a connected signed graph.  
The minimal deletion sets in $\S$ are the complements of the maximal balanced edge sets.
The complement of a minimal deletion set in $\S$ is connected.  
\end{lem}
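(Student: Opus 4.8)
The plan is to treat the two assertions separately. The first is purely formal: since $D$ is a deletion set exactly when $E\setm D$ is balanced, and complementation in $E$ reverses inclusion, minimal deletion sets correspond to maximal balanced edge sets. The second, the connectivity statement, is the only part with graph-theoretic content, and I would prove it by contradiction, using that an edge bridging two components of a subgraph lies in no circle, hence cannot spoil balance when adjoined.

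\emph{First assertion.} By definition, $D\subseteq E$ is a deletion set for $\S$ iff $\S|(E\setm D)$ is balanced. The involution $D\mapsto E\setm D$ of the subsets of $E$ thus restricts to a bijection from the family of deletion sets onto the family of balanced edge sets, and it reverses inclusion; consequently it carries the inclusion-minimal deletion sets to the inclusion-maximal balanced edge sets and back. The family of balanced edge sets is finite and nonempty (it contains $\eset$), so maximal members exist, and the minimal deletion sets are precisely their complements.

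\emph{Connectivity.} Let $B:=E\setm D$ with $D$ a minimal deletion set, so $B$ is a maximal balanced edge set and $\S|B$ is balanced; I must show $c(B)=1$, i.e., $\G|B=(V,B)$ is connected. Suppose it has at least two components. If every edge of $E\setm B$ had all of its endpoints inside a single component of $\G|B$, then, since the edges of $B$ do so as well, $\G$ would be disconnected, contrary to hypothesis. Hence some $e\in E\setm B$ has endpoints in two distinct components of $\G|B$; such an $e$ has two distinct endpoints, so it is neither a loop nor a half edge nor a loose edge, i.e., $e$ is a link. Now $B\cup\{e\}$ contains no half edge (as $B$ contains none and $e$ is a link), and $e$ is a bridge of $(V,B\cup\{e\})$, so it belongs to no circle of that subgraph; therefore every circle of $\S|(B\cup\{e\})$ already lies in $\S|B$ and so is positive. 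Thus $B\cup\{e\}$ is balanced, contradicting the maximality of $B$. Hence $c(B)=1$. (For $n\geq2$ this in particular shows that $B$ contains a spanning tree of $\G$.)

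The argument has no real obstacle; the only care needed is the bookkeeping forced by the enlarged repertoire of edges---verifying that an edge joining two distinct components must be a link, and that adjoining such a link creates neither a half edge nor a negative circle. The degenerate case $n=1$ is vacuous.
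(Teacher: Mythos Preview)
Your proof is correct. The paper does not actually supply a proof of this lemma; it is stated as ``a simple property'' and left to the reader. Your argument fills that gap cleanly: the first assertion is the formal observation that complementation is an order-reversing bijection between deletion sets and balanced sets, and for connectivity you correctly use that a link bridging two components of $(V,B)$ is an isthmus in $B\cup\{e\}$, hence lies in no circle and introduces no half edge, so adjoining it preserves balance and contradicts maximality. The only delicate point---that the edge $e$ found by connectivity of $\G$ must be a link rather than a loop, half edge, or loose edge---you handle explicitly.
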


For $S \subseteq E$ define the \emph{sign balance-closure}:%
\footnote{Not ``balanced closure''; it might be neither balanced nor a closure.}
$$
\bcl_\S(S) := S \cup \{e \notin S : \exists\ \text{positive circle } C \text{ such that } e \in C \subseteq S \cup \{e\} \} \cup E^0.
$$
We also define the partial partition of $V$ due to $S$, 
$$\pib(S) = \{ V(B):  B \text{ is a sign-balanced component of } \S|S \},$$ 
whose parts are the vertex sets of sign-balanced components of $\S|S$; 
$$U_\S(S) := V \setm \bigcup\pib(S),$$ 
the set of vertices of sign-unbalanced components of $S$;
and for a signature $\zeta: \bigcup\pib(S) \to \{\pm1\}$, the set 
$$E(\zeta) := \{ e_{vw} \in E^2{:}U_\S(S)^c: \s(e_{vw}) = \zeta(v)\zeta(w) \},$$
i.e., the edges for which $\zeta$ is a sign potential.  
With these notions we can define the frame matroid $\bfF(\S)$.

\begin{thm}[Frame Matroid of a Signed Graph {\cite[Theorem 5.1]{SG}}]\label{L:sgmatroid}
In the frame matroid of a signed graph $\S$, consider an edge set $S$.  

The rank function is
$$
\rk_\S(S) = n - b_\S(S) = |V(S)| - b_\S(V(S),S).
$$

The sign balance-closure of a balanced edge set is given by $\bcl_\S(S) = E(\zeta){:}\pib(S) \cup E^0$, where $\zeta$ switches $S$ to all positive.
The closure of any edge set is given by
$$
\clos_\S(S) = [E{:}U_\S(S)] \cup \bcl_\S(S{:}U_\S(S)^c) = [E{:}U_\S(S)] \cup [E(\zeta){:}\pib(S)] \cup E^0.
$$

The closed sets are those of the form 
$$
[E{:}U] \cup [E(\zeta){:}\pi] \cup E^0,
$$
where $U \subseteq V$, $\pi$ partitions $U^c$, and $\zeta: U^c \to \{\pm1\}$.

The cocircuits $D$ are any of the following four types:
\begin{enumerate}[{\rm \quad(D1)}]
\item\label{bondb} $D$ is a bond in a balanced component of $\S$.
\item\label{del} $D$ is a minimal deletion set of an unbalanced component of $\S$.
\item\label{bondu} $D$ is a cut in an unbalanced component of $\S$, such that one side is connected and balanced and the other has no balanced component.
\item\label{bondu-del} $D$ consists of a cut in an unbalanced component of $\S$, such that both sides are unbalanced, one side is connected and unbalanced, and the other side has no balanced component, together with a minimal deletion set of the first side.
\end{enumerate}
\end{thm}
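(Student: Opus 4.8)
The plan is to derive everything from the rank function, which I would establish first; the closure and balance-closure then follow from the matroid identity $e\in\clos_\S(S)\iff\rk_\S(S\cup\{e\})=\rk_\S(S)$, the list of closed sets from the fact that they are exactly the fixed points of $\clos_\S$, and the cocircuits from the fact that they are the complements of the hyperplanes, i.e.\ of the corank-$1$ closed sets. To get the rank, I would first observe that an edge set is independent in $\bfF(\S)$ iff it contains no sign circuit, which, unwinding the definition of sign circuit, says exactly that every component of $\S|S$ has cyclomatic number at most one and every such component that contains a circle or a half edge is sign unbalanced. Hence a maximal independent subset of $S$ uses, in each component $B$ of $\S|S$, a spanning tree of $B$ if $B$ is sign balanced and a spanning tree together with one edge of a negative figure if $B$ is unbalanced, a half edge playing the role of an unbalanced loop. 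Summing $|V(B)|-1$ over the balanced components and $|V(B)|$ over the unbalanced ones and adding the isolated vertices not in $V(S)$ gives $\rk_\S(S)=|V(S)|-b_\S(V(S),S)=n-b_\S(S)$, with each loose edge contributing $0$.

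Next I would compute the balance-closure of a sign-balanced set $T$: by Lemma~\ref{L:signswitching} switch by a potential $\zeta$ with $\s^\zeta|_T\equiv+1$; then a new edge $e=e_{vw}$ (or a loop, read as its own circle) lies on a positive circle inside $T\cup\{e\}$ precisely when $v,w$ lie in one component of $T$ and $\s^\zeta(e)=+1$, i.e.\ $\s(e)=\zeta(v)\zeta(w)$, which with the loose edges gives $\bcl_\S(T)=E(\zeta){:}\pib(T)\cup E^0$. For a general edge set $S$ I would determine $\clos_\S(S)$ by asking, edge by edge, when adjoining $e$ leaves $b_\S$ unchanged: it does for edges inside an unbalanced component and for edges inside a balanced component that complete a positive circle, and it fails for every other edge (one completing a negative circle; a half edge or negative loop at a balanced or isolated vertex; an edge joining two distinct components at least one of which is balanced). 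Collecting the cases yields $\clos_\S(S)=[E{:}U_\S(S)]\cup\bcl_\S(S{:}U_\S(S)^c)\cup E^0$, and since $S{:}U_\S(S)^c$ is sign balanced with $\pib(S{:}U_\S(S)^c)=\pib(S)$, substituting the balance-closure formula puts it in the stated form. The list of closed sets is then immediate: a closed set equals $\clos_\S$ of itself and so has that shape, canonically with $U=U_\S$, $\pi=\pib$, and $\zeta$ the switching potential on the balanced part; conversely one checks that every set of that shape is its own closure.

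For the cocircuits I would use that each is the complement of a hyperplane, a closed set $H$ with $\rk_\S(H)=\rk_\S(E)-1=n-b_\S(\S)-1$. Since $\rk_\S$ is the sum of the ranks over the components of $\S$, such an $H$ agrees with $E$ on every component except one, say $B$, where the rank drops by exactly one; so $D=E\setminus H$ lies inside $B$ and is a minimal set with $\rk_\S(B\setminus D)=\rk_\S(B)-1$. If $B$ is sign balanced, switch it all-positive (Lemma~\ref{L:signswitching}), so $H\cap E(B)$ splits $B$ into two connected balanced parts and $D$ is the bond between them --- type (D1). If $B$ is unbalanced, then $B\setminus D$ has exactly one sign-balanced component, on a vertex set $V_1$, the remaining components on $V_2=V(B)\setminus V_1$ being individually unbalanced; closedness forces $D$ to consist of all $B$-edges between $V_1$ and $V_2$ (a cut, empty only if $V_2=\eset$) together with a minimal deletion set of $\G{:}V_1$. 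This is (D2) when $V_2=\eset$, (D3) when $\G{:}V_1$ is already balanced, and (D4) otherwise; in each case Lemma~\ref{L:sbalset} both supplies the connectedness asserted in the statement and, because the complement of a minimal deletion set is connected, shows that no proper subset of $D$ drops the rank, so $D$ is minimal. The converse, that each of (D1)--(D4) is a cocircuit, is the same rank bookkeeping run backwards.

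The rank formula is quick, but I expect the real obstacles to be the closure analysis and, above all, the cocircuit classification: one must see that the bare equation $\rk_\S(B\setminus D)=\rk_\S(B)-1$ forces the very particular decomposition of $B$ into a balanced piece and a ``no-balanced-component'' piece (possibly after one further deletion), and it is exactly here that Lemma~\ref{L:sbalset} is doing the decisive work, both for the connectedness claims and for the minimality of $D$. The half-edge and loose-edge conventions of this paper contribute an extra case or two at every step, but each reduces to the slogans that a half edge behaves like an unbalanced loop and a loose edge is a one-element sign circuit.
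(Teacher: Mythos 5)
The paper does not actually prove this theorem: it is imported verbatim from Zaslavsky's signed-graph paper \cite[Theorem 5.1]{SG}, so there is no internal proof to compare against. Your reconstruction is correct and follows the standard route — rank computed from maximal independent subsets organized by balanced versus unbalanced components (a spanning tree, respectively a spanning tree plus one edge of a negative figure, with half edges playing the role of negative loops and loose edges being matroid loops), closure read off from the rank identity, flats as the fixed points of closure, and cocircuits as complements of hyperplanes, which by direct-sum decomposition live in a single vertex-component of $\S$ — and you correctly identify Lemma \ref{L:sbalset} as the decisive input for the connectedness assertions and the splitting of $D$ into a cut plus a minimal deletion set in types (D\ref{del})--(D\ref{bondu-del}). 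The two steps you leave implicit (that every set of the displayed shape is its own closure, and the converse verification that each of (D1)--(D4) yields a corank-one flat) are genuinely routine given your forward analysis, so I see no gap.
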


Note that (D\ref{bondu-del}) incorporates (D\ref{bondu}) if we allow the deletion set in (D\ref{bondu-del}) to be empty.  However, for practical use it seems better to state (D\ref{bondu}) separately.

A vector representation of a signed graph is $\x: E(\S) \to \K^n$ defined by 
\begin{align*}
\x(e_{ij}) &:= \e_j - \s(e)\e_i,\\
\x(e_i) &:= \e_i, \\
\x(e_\eset) &:= \0.
\end{align*}
(A negative loop $e_{ii}$ is represented by $\pm2\e_i$.  Recall that we assume the field $\K$ has characteristic other than 2.)

\begin{thm}[Representation of Signed Graphs {\cite[Section 8B]{SG}}]\label{T:sgrep}
The mapping $\x$ is a vector representation of the frame matroid $\bfF(\S)$.
\end{thm}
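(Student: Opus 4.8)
\emph{Proof proposal.} The goal is to show that the matroid on ground set $E$ whose independent sets are the linearly independent sub-families of $\{\x(e):e\in E\}$ is exactly $\bfF(\S)$. Since a matroid is determined by its rank function, the plan is to fix an arbitrary edge set $S\subseteq E$ and prove that $r(S):=\dim\operatorname{span}\{\x(e):e\in S\}$ equals the frame-matroid rank $\rk_\S(S)=n-b_\S(S)$ supplied by Theorem~\ref{L:sgmatroid}. I would do this by reducing to connected edge sets and then computing the span directly in the two cases, sign balanced and sign unbalanced.

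\emph{Reduction to components.} For a vertex component $B$ of $\S|S$ the vectors $\x(e)$ attached to the edges of $B$ lie in the coordinate subspace $\K^{V(B)}$, distinct vertex components occupy disjoint coordinate blocks, and each loose edge contributes the zero vector; hence $r(S)$ is the sum, over the vertex components $B$ of $\S|S$ that contain at least one edge, of the ranks of the corresponding sub-families. On the other side, $b_\S(S)$ counts the sign-balanced components of $\S|S$, namely the sign-balanced nontrivial vertex components together with the $n-|V(S)|$ isolated vertices, so $n-b_\S(S)=\sum_{B}\big(|V(B)|-[\,B\text{ sign balanced}\,]\big)$. It therefore suffices to prove: if $S$ is connected with $V(S)\ne\eset$ then $r(S)=|V(S)|-1$ when $S$ is sign balanced and $r(S)=|V(S)|$ when $S$ is sign unbalanced.

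\emph{The connected case.} Put $k:=|V(S)|$. If $k=1$, then $S$ is a set of loops and half edges at a single vertex $v_i$: a positive loop gives $\x=\0$, a negative loop gives $\pm2\e_i\ne\0$, and a half edge gives $\e_i$, so $r(S)$ is $0$ when $S$ is balanced and $1$ when it is unbalanced, as required (this is the first place the hypothesis $\operatorname{char}\K\ne2$ enters). Now assume $k\ge2$ and pick a spanning tree $T$ of links in $\S|S$ (such a $T$ exists since only links connect distinct vertices). A switcher $\zeta$ for which $\S^\zeta$ makes every edge of $T$ positive exists and is unique up to the irrelevant global sign, and a direct check shows that the representation $\x^\zeta$ of $\S^\zeta$ is obtained from $\x$ by the invertible change of basis $\e_i\mapsto\zeta(v_i)\e_i$ together with a $\pm1$ rescaling of each column; hence $r(S)$ is unchanged if we replace $\S$ by $\S^\zeta$, and I may assume $T$ is all positive. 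Then the $k-1$ vectors $\x(e_{ij})=\e_j-\e_i$ for $e\in T$ are linearly independent and span the hyperplane $H=\{z\in\K^{V(S)}:\textstyle\sum_i z_i=0\}$. If $S$ is sign balanced then, by Lemma~\ref{L:signswitching} together with the uniqueness of the tree-switcher, every edge of $S$ is positive after switching, so every $\x(e)$ lies in $H$ and $r(S)=k-1$. If $S$ is sign unbalanced then some edge $e$ of $S$ is not positive after switching; whether $e$ is a negative link, a negative loop, or a half edge, the vector $\x(e)$ has coordinate sum $2$ or $1$ and so lies outside $H$ (again using $\operatorname{char}\K\ne2$), whence $\operatorname{span}\{\x(e):e\in S\}\supseteq H+\K\,\x(e)=\K^{V(S)}$ and $r(S)=k$. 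This completes the computation.

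\emph{Where the difficulty lies.} Given Theorem~\ref{L:sgmatroid} the argument is essentially routine, and I do not expect a serious obstacle. The two points needing a little care are the bookkeeping that rewrites $n-b_\S(S)$ as a sum over the nontrivial components of $\S|S$ (one must remember the isolated vertices and the loose edges, which on the vector side contribute $\0$), and the verification that sign switching acts on the representing vectors by an invertible diagonal transformation, which is what licenses the normalization ``$T$ is all positive.'' The characteristic hypothesis is used only to prevent a non-positive edge---in particular a negative loop---from lying in the sum-zero hyperplane $H$.
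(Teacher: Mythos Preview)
The paper does not actually prove this statement: Theorem~\ref{T:sgrep} is quoted from \cite[Section~8B]{SG} as a known result, with no argument given here. So there is no ``paper's proof'' to compare against.

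That said, your proposal is a correct self-contained proof. The reduction to connected pieces via the block-diagonal structure of the coordinates is clean, and the accounting $n-b_\S(S)=\sum_B(|V(B)|-[B\text{ balanced}])$ is right once one remembers that isolated vertices of $\S|S$ are balanced components contributing zero on both sides and that loose edges give the zero vector. In the connected case your use of switching is exactly what is needed: the observation that $\x^\zeta(e)=\zeta(v_j)\,D_\zeta\,\x(e)$ (diagonal change of basis plus a column rescaling) preserves linear rank, so one may normalize a spanning tree to all-positive; after that the tree vectors span the sum-zero hyperplane $H$, balanced forces every remaining edge to be positive (hence in $H$), and any surviving negative link, negative loop, or half edge has coordinate sum $2$, $2$, or $1$ and escapes $H$ since $\operatorname{char}\K\ne2$. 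The $k=1$ base case is handled correctly as well.

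One tiny clarification you might add: in the unbalanced connected case you assert ``some edge of $S$ is not positive after switching.'' This is true, but the cleanest justification is the contrapositive---if every edge were positive after switching then $S$ would be balanced by Lemma~\ref{L:signswitching}---rather than appealing to a specific negative circle, since balance in this paper also excludes half edges.
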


\subsection{Orientation}\label{sec:orient}\

A signed graph, that is $\S = (\G,\s)$, is oriented differently from an ordinary, unsigned graph.  An \emph{orientation} of $\S$ is a mapping from edge ends to signs, say $\tau: \Ends \to \{+1,-1\}$, such that for a link or loop $e_{vw}$, $\tau(v,e)\tau(w,e) = -\s(e_{vw})$.  In diagrams we interpret an end sign $+1$ as an arrow pointing into the endpoint and sign $-1$ as an arrow pointing from the endpoint into the edge.  With this sign rule an oriented positive edge is an ordinary directed edge.  An oriented negative edge, on the contrary, does not have a direction; it is either \emph{introverted}, with both ends negative, or \emph{extraverted}, with both ends positive.  A half edge also has an orientation (at its single end); it is introverted if its end is negative and extraverted if it is positive.  
A loose edge has no orientation (as it has no end).  To denote an oriented signed graph and its orientation we write $\S_\tau$.  When we do not wish to specify a particular orientation of an edge we write $e$ for one orientation and $e\inv$ for the opposite orientation.

\begin{figure}[h]
\includegraphics{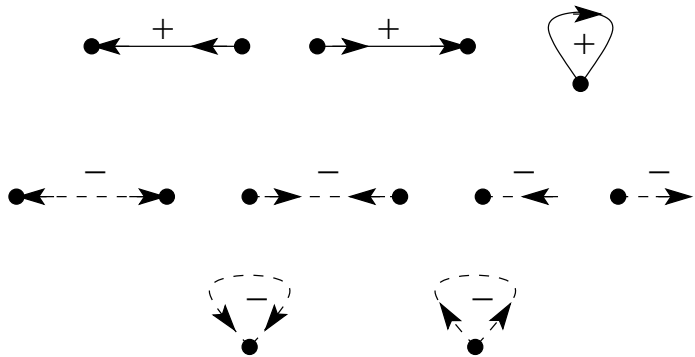}
\bigskip
\caption{Oriented edges.  Top: positive edges with $\tau = +-,\ -+,\ -+$.  Middle:  negative edges with $\tau = ++,\ --,\ +,\ -$. Bottom:  negative loops with $\tau = ++,\ --$.}
\label{F:oriented edges}
\end{figure}

Orientations switch: if we switch $\S$ by $\zeta$, an orientation $\tau$ becomes $\tau^\zeta$ defined by $\tau^\zeta(v,e) = \zeta(v)\tau(v,e)$.  Switching of an orientation applies to all edges (and is trivial for loose edges).

A walk in an oriented signed graph, say $W=u_0e_1u_1e_2\cdots e_lu_l$, is \emph{coherent} if the arrows line up at each vertex, that is, $\tau(u_i,e_i) = -\tau(u_i,e_{i+1})$ for each $i=1,2,\ldots,l-1$.  (In any walk, each intermediate vertex $u_i$ is either coherent, if $\tau(u_i,e_i) = -\tau(u_i,e_{i+1})$, or incoherent.  Coherence of a vertex in a walk is not altered by switching.)  The same definition applies to any ultrawalk if among the intermediate vertices we include any vertex supporting a half edge.  
A \emph{closed walk}, in which $u_l=u_0$, is \emph{coherent} if it is coherently oriented as an open walk and in addition it closes coherently, i.e., $\tau(u_l,e_l) = -\tau(u_0,e_1)$; this implies that $\s(W)=+1$.  A closed walk is \emph{rooted coherent} if it is coherent as an open walk but not as a closed walk; this implies that $\s(W) = -1$; the \emph{root} is $u_0$.  To prove those sign statements, we show that a positive closed walk, however oriented, has an even number of incoherent vertices, while a negative circle has an odd number.  The proof is a simple calculation  (with subscripts modulo $l$):
\begin{align*}
\s(C) &= \prod_{i=1}^l \s(e_i) = \prod_{i=1}^l [-\tau(u_{i-1},e_i)\tau(u_i,e_i)] 
\\&= \prod_{i=1}^l [-\tau(u_i,e_i)\tau(u_i,e_{i+1})] = \prod_{i=1}^l \iota(u_i),
\end{align*}
where $\iota(u_i) = +1$ or $-1$ if the vertex is, respectively, coherent or incoherent.

A \emph{source} in an oriented signed graph is a vertex $v$ such that all arrows point inward, i.e., $\tau(v,e) = +1$ for every edge end $(v,e)$ at $v$.  A \emph{sink} is similar but all arrows point outward, i.e., $\tau(v,e) = -1$ for every edge end $(v,e)$ at $v$.  
There are exactly two ways to orient a sign circuit (other than a loose edge) so that none of its vertices is a source or sink, and they differ only by reversing the orientations of all the edges.  We call such an edge set in an oriented signed graph a \emph{sign cycle}.

A sign circuit $C$ has one or two minimal covering ultrawalks, up to choice of direction and starting point, which we call \emph{circuit walks}.  The description can be complicated.  For a circle the circuit walk goes once around the circle.  For a tight handcuff whose negative figures are circles, it goes once around each circle, crossing over at their intersection vertex.  For a loose handcuff whose negative figures are circles, it goes once around each circle and twice through the connecting path, once in each direction; this reduces to the circuit walk of a tight handcuff if the connecting path has length zero.  For a handcuff in which one negative figure is a circle and the other is a half edge $h$, the circuit walk begins at $h$, goes through the connecting path (if any) to the circle, goes once around the circle, and returns along the connecting path to end with $h$.  In a handcuff whose negative figures are both half edges, the circuit walk is an ultrawalk that begins with one half edge and proceeds to the other, where it ends.  (The difference of the last type from the others is due to the fact that negative circles interfere with unimodularity while half edges do not.  This oddity does not affect our theory.  For a treatment of it see \cite{BZ}.)

For all types, if $C$ is oriented, it is a sign cycle if and only if one (equivalently, each) of these walks is coherent.

Now consider a circle.  If it is positive, it can be coherently oriented, and then it is a sign cycle.  If it is negative, it can be coherently oriented except at one arbitrarily chosen incoherent vertex $v$ with incident edges $e, f$.  Then we call it an \emph{excycle} \emph{rooted at $v$} if $\tau(v,e)=\tau(v,f)=+1$ (i.e., the edge ends at $v$ point towards $v$ and thus out of the circle) and an \emph{incycle} if the opposite.  An incycle or excycle is the nearest a negative circle can be to a cycle.  We also call an oriented half edge an incycle or excycle depending whether it is oriented into or out from its vertex.  
The orientation of a sign cycle on a loose handcuff is such that one circle is an incycle, the other is an excycle, and the connecting path is a coherent path connecting the root of the excycle to the root of the incycle and, at its endpoints, coherent with the incident circle edges.  For a tight handcuff sign cycle, the two roots are the common vertex and the incident edges in different circles are coherent with each other.  

\begin{figure}[htbp]
\includegraphics[scale=.7]{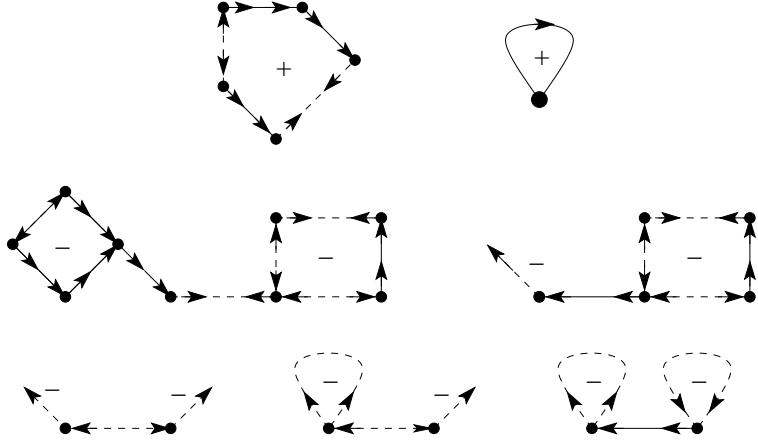}
\smallskip
\caption{Some sign cycles. Solid edges are positive and dashed edges are negative.  Top row:  a positive circle of length 5 and a positive loop.  Bottom rows:  sign-unbalanced sign circuits with different kinds of negative figures.}
\label{F:signcycles}
\end{figure}

\subsection{Gains}\label{sec:gains}\

\subsubsection{The gain function}\

The gain function $\phi$ is a mapping from oriented edges to an abelian group, written additively, such that the two orientations of the edge, $e$ and $e\inv$, have mutually inverse gains, i.e.,  $\phi(e\inv) = -\phi(e)$ (so when we specify a value for the gain $\phi(e)$, we must also specify the orientation).  
For a positive edge or a half edge, this means that the two directions of the edge have mutually inverse gains; but for a negative edge there is not a direction in the usual sense.  
An edge with gain 0 is called \emph{neutral}.  
The set of neutral loose edges is written $E^{00}$.  
Note that all edges have gains (unlike the definition in \cite{BG2}).  

In a slightly more formal definition, let the set of oriented edges be $\vec E$.  The gain function is a mapping $\phi$ from $\vec E$ to an additive abelian group with the property that $\phi(e\inv) = -\phi(e)$.  (It is important to remember that $e$ and $e\inv$ are the same object with opposite orientations.)

\emph{Reorienting} an edge in $\Ups$ means negating the $\tau$ values of that edge and inverting the gain of that edge as well.  We define sign-switching to have no effect on gains.

The restriction of a gain signed graph to an edge set $S$ is $\Ups|S := (\G|S, \s|_S, \phi|_S)$, abbreviated as $(\G|S,\s,\phi)$.  

Suppose we treat an unsigned graph as all positive; the pair $(\G,\phi)$ is called a \emph{gain graph}.  The frame matroid theory of signed graphs was developed in \cite{SG}; that of gain graphs was developed in \cite{BG2}.  The former is a special case of the latter; but it turns out that this is the wrong way to look at it, at least for us.  A gain signed graph is not a gain graph: the negative edges cannot be treated as edges of a gain graph.  The reason for our treatment of negative edges will appear when we introduce the vector model in Section \ref{sec:vector}.

Our gain group will be the additive group $\K^+$ of a field $\K$ whose characteristic is not equal to 2 (because we need $1 \neq -1$), especially the real or complex numbers.  We generalize this to an arbitrary abelian group in Section \ref{sec:abstract}.  The most interesting such groups may be the additive group of integers and those of the integers modulo a natural number.

\subsubsection{Walk gains}\label{sec:walkgains}\

We now define a crucial concept: the gain sum of a sign circuit $C$.  First we define that of a walk or ultrawalk $W$.  We want a definition that makes the gain of $W$ invariant under reorientation, that agrees with the gain of a walk in a gain graph, and that agrees with the vector model in Section \ref{sec:vector}; this implies a complicated definition that is not invariant under sign switching.  We choose an orientation $\tau$ of $\S$.  Then the gain of a walk $W=u_0e_1u_1e_2\cdots e_lu_l$ is 
\begin{equation}
\begin{aligned}
\phi(W) &:= - \tau(u_0,e_1) \sum_{i=1}^l \phi(e_i) \prod_{j=1}^{i-1} [-\tau(u_j,e_j)\tau(u_j,e_{j+1})] 
\\&= - \sum_{i=1}^l \phi(e_i) \s(W_{0,i-1}) \tau(u_{i-1},e_i),
\end{aligned}
\label{E:walkgainsum}
\end{equation}
where $W_{0,i-1}$ denotes the part of $W$ from $u_0$ to $u_{i-1}$.  The factor $[- \tau(u_0,e_1)]$ is a general sign correction that is positive if the first edge leaves $u_0$, negative if it enters $u_0$.  The factors $[-\tau(u_j,e_j)\tau(u_j,e_{j+1})]$ are positive at a coherent vertex $u_j$ and negative, thus reversing the signs with which following edge gains are added, at an incoherent vertex.  That is, $\phi(e_i)$ is added to the sum if it is preceded by an even number of incoherent vertices and subtracted if preceded by an odd number of incoherent vertices.

The gain of an ultrawalk is similar but we have to take account of some extreme cases, such as an ultrawalk that consists only of one half edge.  Let $W=e_0u_0e_1u_1e_2\cdots e_lu_le_{l+1}$ be an ultrawalk, in which $e_0$ and $e_{l+1}$ are half edges.  Its gain is given by 
\begin{equation}
\begin{aligned}
\phi(W) 
&:= \tau(u_0,e_0)\phi(e_0) + \phi(W_{0l}) - \phi(e_{l+1}) \s(W_{0l}) \tau(u_l,e_{l+1}),
\end{aligned}
\label{E:ultrawalkgainsum}
\end{equation}
where the first term is omitted if $W$ has no initial half edge and the last term is omitted if $W$ has no final half edge.

The gain of a walk of length $1$ may not equal the gain of the edge $e_1$; that depends on the orientation of $e_1$.  Considering the walk $u_0e_1u_1$, we see that 
\begin{equation}
\phi(u_0e_1u_1) = -\tau(u_0,e_1) \phi(e_1).
\label{E:walkgainsum1}
\end{equation}
Similarly, the gain of an ultrawalk $e_0u_0$ or $u_0e_1$ is given respectively by
\begin{equation}
\phi(e_0u_0) = \tau(u_0,e_0) \phi(e_0) \text{ and } \phi(u_0e_1) = -\tau(u_0,e_1) \phi(e_1).
\label{E:walkgainsum2}
\end{equation}

\begin{lem}\label{L:swwalkgain}
The gain of a walk or ultrawalk is invariant under reorientation and sign switching, except that it is negated by sign switching the initial vertex.
\end{lem}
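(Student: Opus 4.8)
\emph{Approach.}
The plan is to work directly from the expanded (second) form of \eqref{E:walkgainsum},
$\phi(W) = -\sum_{i=1}^{l} \phi(e_i)\,\s(W_{0,i-1})\,\tau(u_{i-1},e_i)$,
and track how each summand changes; the ultrawalk formula \eqref{E:ultrawalkgainsum} is then dealt with by handling its two half-edge terms separately. The one preliminary computation I need is how the sign of a partial walk transforms: for a walk $Y = v_0 f_1 v_1 \cdots f_k v_k$ through links and loops, switching by $\zeta$ gives $\s^\zeta(Y) = \zeta(v_0)\zeta(v_k)\,\s(Y)$, because each interior vertex contributes a factor $\zeta(v_j)^2 = 1$ and only the two endpoints survive; in particular $\s^\zeta(W_{0,i-1}) = \zeta(u_0)\zeta(u_{i-1})\,\s(W_{0,i-1})$. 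I also note that edge signs, hence $\s(W_{0,i-1})$, are untouched by reorientation, since reorienting $e_{vw}$ negates both end-signs $\tau(v,e)$ and $\tau(w,e)$ and so fixes $-\tau(v,e)\tau(w,e) = \s(e_{vw})$.

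\emph{Reorientation.}
Reorienting an edge $e$ affects the formula only through the summands with $e_i = e$, and for each of these it negates $\phi(e_i)$ and $\tau(u_{i-1},e_i)$ simultaneously while fixing $\s(W_{0,i-1})$, so the summand is unchanged; repeated traversals of $e$ along $W$ are each unchanged this way, and edges not traversed by $W$ are irrelevant. Hence $\phi(W)$ is reorientation-invariant. For an ultrawalk, the middle term $\phi(W_{0l})$ is covered by the walk case, the initial term $\tau(u_0,e_0)\phi(e_0)$ is fixed because reorienting $e_0$ flips both of its factors, and the final term $-\phi(e_{l+1})\s(W_{0l})\tau(u_l,e_{l+1})$ is fixed for the same reasons; a missing half edge or the case $l = 0$ merely drops terms.

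\emph{Sign switching.}
Switching by $\zeta$ leaves every $\phi(e_i)$ fixed (gains do not switch), sends $\tau(u_{i-1},e_i)$ to $\zeta(u_{i-1})\tau(u_{i-1},e_i)$, and sends $\s(W_{0,i-1})$ to $\zeta(u_0)\zeta(u_{i-1})\s(W_{0,i-1})$, so the $i$-th summand is multiplied by $\zeta(u_0)\zeta(u_{i-1})^2 = \zeta(u_0)$. Summing, $\phi(W^\zeta) = \zeta(u_0)\phi(W)$, which is exactly the assertion: $\phi(W)$ is invariant unless $\zeta(u_0) = -1$, i.e.\ unless the initial vertex is switched, in which case it is negated. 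The same accounting applied to \eqref{E:ultrawalkgainsum} gives the ultrawalk case: the initial half-edge term scales by $\zeta(u_0)$ through $\tau(u_0,e_0)$, the middle term by the walk case, and the final term because $\s(W_{0l})$ contributes $\zeta(u_0)\zeta(u_l)$ and $\tau(u_l,e_{l+1})$ contributes $\zeta(u_l)$, for net factor $\zeta(u_0)$.

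\emph{Main difficulty.}
There is no genuine obstacle here; the content is bookkeeping. The points to handle carefully are the telescoping identity for $\s^\zeta$ of a partial walk — checking that interior vertices really cancel in pairs so only $\zeta(u_0)$ and $\zeta(u_{i-1})$ remain — and the patient verification that every boundary shape of an ultrawalk (an initial half edge, a final half edge, both, or a walk part of length $0$, i.e.\ two half edges at one vertex) is consistent with $\phi(W^\zeta) = \zeta(u_0)\phi(W)$ and with reorientation-invariance; the length-$1$ identities \eqref{E:walkgainsum1} and \eqref{E:walkgainsum2} supply the needed base checks.
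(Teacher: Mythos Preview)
Your proof is correct and takes essentially the same approach as the paper: both argue directly from the walk-gain formula \eqref{E:walkgainsum} and track how each factor transforms under reorientation and sign switching. The paper's version is terser---it argues vertex-by-vertex (initial, internal, final) using the coherence factors in the first form of \eqref{E:walkgainsum}---while you compute with a general switcher $\zeta$ all at once via the second form and are more explicit about the ultrawalk boundary terms; but these are stylistic differences, not a different route.
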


\begin{proof}
Reorienting an edge $e_i$ changes the sign of both $\phi(e_i)$ and $\tau(u_{i-1},e_i)$, so it has no effect on the term of $e_i$.  For $j>i$ the sign of $W_{0,j-1}$ remains the same.  Thus, there is no change to $\phi(W)$.

Switching the initial vertex negates $\tau(u_0,e_1)$ (and $\tau(u_0,e_0)$ if there is an initial half edge or if $W$ is closed so $u_l=u_0$ and we can take $e_0=e_l$), so it negates the sum in \eqref{E:walkgainsum}.  Switching the final vertex has no effect on the sum.  Switching an internal vertex has no effect on coherence, so it does not change the sum.
\end{proof}

\begin{lem}\label{L:revwalkgain}
For a walk or ultrawalk $W$, the gain of\/ $W\inv$ is given by $\phi(W\inv) = -\s(W)\phi(W)$.
\end{lem}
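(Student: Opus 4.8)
The plan is a direct computation from the defining formula \eqref{E:walkgainsum}, matching $\phi(W\inv)$ against $\phi(W)$ one edge-term at a time after a reindexing. Fix an orientation $\tau$ of $\S$ once and for all; by Lemma~\ref{L:swwalkgain} the gains involved do not depend on this choice. First take an ordinary walk $W = u_0e_1u_1\cdots e_lu_l$, so that $W\inv = u_le_lu_{l-1}\cdots e_1u_0$. Writing the vertices of $W\inv$ as $u'_k := u_{l-k}$ and its edges as $e'_k := e_{l-k+1}$, and setting $i := l-k+1$, the $e_i$-term of $\phi(W\inv)$, read off from the second form of \eqref{E:walkgainsum}, is $-\phi(e_i)\,\s\!\big((W\inv)_{0,\,l-i}\big)\,\tau(u_i,e_i)$: in $W\inv$ the edge $e_i$ is entered at the vertex $u_i$ and is preceded by the stretch of $W\inv$ running from $u_l$ back to $u_i$, which is $(W_{i,l})\inv$.

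The computation then rests on two elementary facts. Since the sign of a walk is the product of the signs of its edges counted with multiplicity, it is unchanged by reversal and multiplicative under concatenation; hence $\s\!\big((W\inv)_{0,\,l-i}\big) = \s(W_{i,l})$, and from the splitting $W = W_{0,i-1}\,(u_{i-1}e_iu_i)\,W_{i,l}$ we get $\s(W_{i,l}) = \s(W)\,\s(W_{0,i-1})\,\s(e_i)$, all three factors being $\pm1$. Secondly, the orientation axiom $\tau(u_{i-1},e_i)\,\tau(u_i,e_i) = -\s(e_i)$ for a link or loop rearranges to $\s(e_i)\,\tau(u_i,e_i) = -\tau(u_{i-1},e_i)$. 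Substituting both, the $e_i$-term of $\phi(W\inv)$ becomes
\[
-\phi(e_i)\,\s(W)\,\s(W_{0,i-1})\,\s(e_i)\,\tau(u_i,e_i)
\;=\; \s(W)\,\phi(e_i)\,\s(W_{0,i-1})\,\tau(u_{i-1},e_i),
\]
which is exactly $-\s(W)$ times the $e_i$-term of $\phi(W)$. Summing over $i=1,\dots,l$ gives $\phi(W\inv) = -\s(W)\,\phi(W)$.

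For an ultrawalk $W = e_0u_0e_1\cdots e_lu_le_{l+1}$ (with $e_0$ and/or $e_{l+1}$ half edges) one applies \eqref{E:ultrawalkgainsum}. Reversal interchanges the roles of the initial and final half edges and replaces the core walk $W_{0l}$ by $(W_{0l})\inv$, whose gain is $-\s(W_{0l})\,\phi(W_{0l})$ by the case just settled, with $\s\!\big((W_{0l})\inv\big) = \s(W_{0l})$. One then expands $-\s(W)\,\phi(W)$, regroups it into its initial-half-edge, core, and final-half-edge parts, and checks each against the matching part of $\phi(W\inv)$, using \eqref{E:walkgainsum2} for the boundary half-edge terms.

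The argument is pure sign bookkeeping, and I expect the only delicate point to lie in the ultrawalk step: one must keep straight which terminal half edge of $W$ becomes the initial and which the final half edge of $W\inv$, and treat the short degenerate ultrawalks directly via \eqref{E:walkgainsum2}. For an ordinary walk the entire content is the single identity $\s(e_i)\,\tau(u_i,e_i) = -\tau(u_{i-1},e_i)$, which is exactly what lets one re-read an edge-term from the opposite end of the walk at the cost of the global factor $-\s(W)$.
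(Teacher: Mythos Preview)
Your proof is correct and is essentially the paper's own argument: both reduce to the identity $\s(e_i)\tau(u_i,e_i)=-\tau(u_{i-1},e_i)$ together with multiplicativity of the sign to match the $e_i$-term of one side against the $e_i$-term of the other. The only cosmetic difference is that the paper starts from $\phi(W)$ and manipulates it into $-\s(W)\phi(W\inv)$, while you start from $\phi(W\inv)$; and the paper carries out the ultrawalk computation in full where you sketch it.
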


\begin{proof}
We compute for a walk:
\begin{align*}
\phi(W) &=  - \sum_{i=1}^l \phi(e_i) \s(W_{0,i-1}) \tau(u_{i-1},e_i) 
\\&= - \sum_{i=1}^l \phi(e_i) \s(W)\s(W\inv_{l,i}) \s(e_i) \tau(u_{i-1},e_i) 
\\&= \s(W) \sum_{i=1}^l \phi(e_i) \s(W\inv_{l,i}) [\tau(u_i,e_i)] 
= -\s(W) \phi(W\inv).
\end{align*}
For an ultrawalk with half edges at both ends,
\begin{align*}
\phi(W) &=  \tau(u_0,e_0)\phi(e_0) + \phi(W_{0l}) - \phi(e_{l+1}) \s(W_{0l}) \tau(u_l,i_{l+1})
\\&= \s(W_{0l}) [ \s(W_{0l}) \tau(u_0,e_0)\phi(e_0) + \s(W_{0l}) \phi(W_{0l}) - \phi(e_{l+1}) \tau(u_l,i_{l+1}) ]
\\&= \s(W_{0l}) [ \phi(e_{l+1}) (-\tau(u_l,i_{l+1}) + \phi(W_{l0}) - \s(W_{0l}) (-\tau(u_0,e_0))\phi(e_0) ]
\\&= -\s(W) \phi(W\inv)
\end{align*}
by \eqref{E:ultrawalkgainsum}, because for the formula in the direction of $W\inv$, the half-edge orientations reverse.  For an ultrawalk with a half edge at only one end, omit one term in the preceding expressions.
\end{proof}

\begin{lem}\label{L:sumwalkgains}
If $W$ is the concatenation of walks, $W = W_1W_2$, then $\phi(W) = \phi(W_1) + \s(W_1)\phi(W_2)$.  The same holds for the concatenation of any two ultrawalks $W_1$ and $W_2$ for which $W_1$ ends and $W_2$ begins at the same vertex.
\end{lem}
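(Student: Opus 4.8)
The plan is to prove the walk case first by unfolding the defining sum \eqref{E:walkgainsum} and splitting it at the junction vertex, and then to bootstrap the ultrawalk case from it by peeling off the initial and final half-edge correction terms supplied by \eqref{E:ultrawalkgainsum}. Nothing beyond the definitions of $\phi$ and $\s$ for walks and ultrawalks, together with the trivial fact that $\s$ of a concatenation of walks is the product of the two signs, should be needed.

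For the walk case, write $W_1 = u_0e_1u_1\cdots e_ku_k$ and $W_2 = u_ke_{k+1}u_{k+1}\cdots e_lu_l$, so that $W = W_1W_2 = u_0e_1u_1\cdots e_lu_l$, and substitute $W$ into the second form of \eqref{E:walkgainsum}, breaking $\sum_{i=1}^l$ into $\sum_{i=1}^k$ and $\sum_{i=k+1}^l$. For $1\le i\le k$ the prefix $W_{0,i-1}$ of $W$ agrees edge for edge with the corresponding prefix of $W_1$, so the first block is exactly the sum \eqref{E:walkgainsum} that computes $\phi(W_1)$. For $k+1\le i\le l$ the prefix $W_{0,i-1}$ is $W_1$ followed by a prefix of $W_2$, and since the sign of a walk is the product of its edge signs we get $\s(W_{0,i-1}) = \s(W_1)\,\s\bigl((W_2)_{0,\,i-1-k}\bigr)$; pulling the constant $\s(W_1)$ out of the second block and re-indexing $j = i-k$ leaves precisely $\s(W_1)$ times the sum \eqref{E:walkgainsum} for $W_2$, i.e.\ $\s(W_1)\phi(W_2)$. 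Adding the two blocks gives $\phi(W) = \phi(W_1) + \s(W_1)\phi(W_2)$; the degenerate cases $k=0$ and $k=l$ need no argument, since a length-$0$ walk has gain $0$ and sign $+1$.

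For the ultrawalk case, observe that since $W_1$ ends and $W_2$ begins at a common (genuine) vertex, the only half edges in $W = W_1W_2$ can be an initial one inherited from $W_1$ and a final one inherited from $W_2$, and the walk part of $W$ is the concatenation of the walk parts of $W_1$ and $W_2$. Applying \eqref{E:ultrawalkgainsum} to $W$, $W_1$, and $W_2$ rewrites each gain as the gain of its walk part, plus the initial half-edge term $\tau\phi$ (when present) and minus the final half-edge term (when present); one then invokes the walk case just proved on the walk parts, uses multiplicativity of $\s$ on that concatenated walk part, and regroups, letting the initial half-edge term join the $W_1$-contribution and the final half-edge term -- whose sign coefficient splits through the junction -- join the $W_2$-contribution, with \eqref{E:walkgainsum1} and \eqref{E:walkgainsum2} covering the length-$0$ corners. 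The step I expect to be the real obstacle is exactly this regrouping: one must track carefully, using the convention that a half edge contributes $-1$ to an ultrawalk's sign, which sign factor ultimately multiplies the $W_2$-contribution, and verify that it matches the coefficient $\s(W_1)$ asserted in the statement. Everything in the walk case itself is routine index bookkeeping.
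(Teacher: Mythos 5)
Your walk case is exactly the paper's argument (the published proof is a one-line appeal to the second form of \eqref{E:walkgainsum}), and it is complete and correct: splitting the sum at the junction vertex, using $\s(W_{0,i-1})=\s(W_1)\s\bigl((W_2)_{0,i-1-k}\bigr)$ for $i>k$, and factoring $\s(W_1)$ out of the tail is all that is needed.

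The ultrawalk case is where the actual content lies, and you have explicitly deferred the one step that matters: ``which sign factor ultimately multiplies the $W_2$-contribution.'' Carrying out your own regrouping --- write $W_1=e_0u_0\cdots u_k$ and $W_2=u_k\cdots u_le_{l+1}$, apply \eqref{E:ultrawalkgainsum} to $W$, $W_1$, $W_2$, and use the walk case on the concatenated walk parts --- one finds
$$\phi(W) \;=\; \phi(W_1) \;+\; \s\bigl((W_1)_{0k}\bigr)\,\phi(W_2),$$
i.e.\ the coefficient that emerges is the sign of the \emph{walk part} of $W_1$, not the sign of the ultrawalk $W_1$ as the paper defines it (the product of all its edge signs, with a leading half edge contributing $-1$). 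The two coincide exactly when $W_1$ has no initial half edge; when it does, they differ by a factor $-1$. The shortest example makes this concrete: for $W_1=e_0u_0$ and $W_2=u_0e_1u_1$, formulas \eqref{E:walkgainsum1}--\eqref{E:walkgainsum2} and \eqref{E:ultrawalkgainsum} give $\phi(W)=\tau(u_0,e_0)\phi(e_0)-\tau(u_0,e_1)\phi(e_1)=\phi(W_1)+\phi(W_2)$, so the coefficient is $+1$ while $\s(W_1)=\s(e_0)=-1$. So the verification you postpone does not simply ``match the coefficient $\s(W_1)$ asserted in the statement''; you must either restrict the ultrawalk clause to $W_1$ with no initial half edge (which already covers concatenations where only $W_2$ carries a half edge), or state explicitly that $\s(W_1)$ is to be read as the sign of the walk part of $W_1$. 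Until you commit to one of these and finish the computation, the ultrawalk half of your proof is not done.
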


\begin{proof}
This follows directly from the second form of Equation \ref{E:walkgainsum}, with appropriate modifications for half edges.
\end{proof}

The complicated definitions \eqref{E:walkgainsum} and \eqref{E:ultrawalkgainsum} call for explanation.  For simplicity assume $W$ is a path and $\tau(u_0,e_0)=+1$ ($e_0$ leaves $u_0$).  Consider first a path of positive edges, all oriented in the direction from the beginning at $u_0$ to the end at $u_l$.  Then $-\tau(u_{i-1},e_i) = +1 = \s(W_{0,i-1})$, so $\phi(W) = \sum_{i=1}^l \phi(e_i)$, simply a sum, consistent with an ordinary gain graph.  This consistency is what we want.  Suppose, though, that some of the edges (still all positive) are oriented in reverse; then at a backward edge we have $-\tau(u_{i-1},e_i) = -1$ so the gain of $e_i$ is subtracted, not added.  Equivalently, we can view this as first reorienting the backward edges, which negates their gains, and then summing the revised gains as in the first case.  Note that reorienting (say) the first backward edge $e_i$ changes $u_{i-1}$ from incoherent to coherent and reverses the coherence of $u_i$ (this is for $i>1$).  Thus, the number of incoherent vertices before $e_i$, and only before $e_i$, changes parity, while the gain changes sign.  This implies that the sign we put on $\phi(e_i)$ is determined by the number $p$ of preceding incoherent vertices in $W$ and the sum should contain $(-1)^p\phi(e_i)$, exactly as stated after Equation \eqref{E:walkgainsum}.

But what if our path $W$ contains negative edges?  We may switch it with a suitable sign-switching function $\zeta$ to $W^\zeta$ which is all positive; this does not change the gains of edges nor the state of coherence or incoherence at any vertex.  We may choose $\zeta$ not to switch the initial vertex, i.e., so that $\zeta(u_0)=+1$.  By Lemma \ref{L:swwalkgain} $\phi(W^\zeta) = \phi(W)$.  That is, the sign of $\phi(e_i)$ in the sum is still determined by the number of preceding incoherent vertices.

\subsubsection{Sign-cycle gains}\

Consider a positive circle $C$.  Its gain $\phi(C)$ is the sum of its edge gains after sign switching and reorientation so it is an all-positive sign cycle.  The only uncertainty is the sign of $\phi(C)$, which depends on the choice of direction around $C$.  There is not much else to say about it.

A handcuff $C$ is a different story.  First we need a more elementary concept.

A \emph{rooted negative figure} $(C_1,v)$ is a negative figure $C_1$ (a circle or half edge) with a distinguished vertex $v$, the \emph{root}.  
If $C_1$ is a circle, to compute its gain we orient it as an excycle rooted at $v$, i.e., so that $\tau(v,e)=-1$ for both edges at $v$.  Then the \emph{gain of $(C_1,v)$}, written $\phi(C_1,v)$, is the gain of a minimal walk around $C$ from $v$ to $v$.  By Lemmas \ref{L:swwalkgain} and \ref{L:revwalkgain} and since $C$ is negative, $\phi(C_1,v)$ is independent of the particular orientation and choice of direction, except that it negates if $\tau(v,e)=+1$ for the edges at $v$.  

If $C_1=\{e\}$, consisting of a half edge $e$ at vertex $v$, orient it as an excycle, that is with $\tau(v,e) = -1$; the gain $\phi(C_1,v)$ is the gain of the excycle $\{e\}$ considered as an oriented ultrawalk, which by \eqref{E:walkgainsum2} is $\phi(e)$ since $(C_1,v)$ is an excycle.

Now, orient the edges of the handcuff $C$ so it is a sign cycle.  If its two negative figures are circles, choose a circuit walk $W$ around $C$ (which will necessarily be coherent).  The gain of $C$ is $\phi(C) := \phi(W)$.  If one negative figure in $C$ is a circle and the other is a half edge $e$, we choose $W$ to be a circuit ultrawalk that begins and ends with $e$.  If both negative figures are half edges, we choose $W$ to be a circuit ultrawalk that begins at one half edge and ends at the other (this type is exceptional in that the connecting path is traversed only once).  

For a loose edge $e$, its gain as a sign circuit is its gain as an edge.

In the following proposition we compute the gain of a sign circuit.  

\begin{prop}\label{L:gainsum}
The gain of a sign circuit $C$ is well defined up to negation and choice of circuit walk.  
For a positive circle $C$, it is the sum of the edge gains when $C$ is oriented so every vertex is coherent.  
If $C$ is a handcuff with rooted negative figures $(C_1,u_1)$ and $(C_2,u_2)$ where the roots are the endpoints of the connecting path $P$, and $P$ is oriented out of $u_1$, then 
$$\phi(C) = \phi(C_1,u_1) - \phi(C_2,u_2) - 2 \phi(P_{u_1u_2})$$ 
(or its negative) if both negative figures are circles,
$$\phi(C) = 2\phi(C_1,u_1) - \phi(C_2,u_2) - 2 \phi(P_{u_1u_2})$$ 
(or its negative) if $C_1$ is a half edge and $C_2$ is a circle, and
$$\phi(C) = \phi(C_1,u_1) - \phi(C_2,u_2) - \phi(P_{u_1u_2})$$ 
(or its negative) if both negative figures are half edges.
\end{prop}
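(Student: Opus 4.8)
The plan is to reduce everything to the three walk-gain lemmas. By definition $\phi(C)$ is the gain $\phi(W)$ of a circuit walk $W$ of $C$, computed after the edges of $C$ are oriented so that $C$ is a sign cycle, so the first task is to check that this is well defined up to negation. A sign circuit other than a loose edge has exactly two sign-cycle orientations, differing by reversing every edge; reversing all edges turns a circuit walk $W$ into $W\inv$, and Lemma \ref{L:revwalkgain} gives $\phi(W\inv) = -\s(W)\phi(W) = \pm\phi(W)$. Any two circuit walks for a fixed orientation differ by a change of starting point and/or direction (and, in the ``two circuit walks'' cases, by which negative figure one starts in); for a closed circuit walk a change of starting point writes $W = AB$, $W' = BA$, and iterating Lemma \ref{L:sumwalkgains} with $\s(A)\s(B)=\s(W)=+1$ gives $\phi(W') = \phi(B)+\s(B)\phi(A) = \s(A)\phi(W) = \pm\phi(W)$, while for the two-half-edge handcuff there is no free starting point and only reversal occurs. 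Finally, Lemma \ref{L:swwalkgain} shows $\phi(W)$ is unchanged by reorientation of individual edges and by sign switching, except possibly for a sign flip when the base vertex is switched. Hence $\phi(C)$ is determined up to negation, which is exactly the ``(or its negative)'' in the statement.

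\textbf{Positive circle.}
Orient $C$ as a sign cycle; then its circuit walk $W$ runs once around $C$ and is a coherent closed walk, so every intermediate vertex is coherent and every correction factor $[-\tau(u_j,e_j)\tau(u_j,e_{j+1})]$ in \eqref{E:walkgainsum} equals $+1$. Thus $\phi(W) = -\tau(u_0,e_1)\sum_{i=1}^{l}\phi(e_i)$, and choosing the direction so that the first edge leaves $u_0$ makes this $\sum_i \phi(e_i)$, the sum of the edge gains when $C$ is coherently oriented.

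\textbf{Handcuff.}
Here I would fix the sign-cycle orientation and take the circuit walk $W$ described in Section \ref{sec:orient}: from $u_1$, go around $C_1$ (a closed walk $W_{C_1}$ based at $u_1$, or, if $C_1$ is a half edge, traverse $h_1$), cross $P$ from $u_1$ to $u_2$ (giving $P_{u_1u_2}$), go around $C_2$ (as $W_{C_2}$, or traverse $h_2$), and return along $P$ (giving $P_{u_2u_1}=P_{u_1u_2}\inv$); when half edges are present the path is instead traversed once. Decomposing $W$ this way and applying Lemma \ref{L:sumwalkgains} iteratively, using $\s(W_{C_i})=-1$ for a negative circle, $\s(P_{u_1u_2})=\s(P_{u_2u_1})=\s(P)$, $\s(P)^2=1$, and $\phi(P_{u_2u_1})=-\s(P)\phi(P_{u_1u_2})$ from Lemma \ref{L:revwalkgain}, collapses the $\phi(P)$-terms to $-2\phi(P_{u_1u_2})$ in the two-circle and circle-plus-half-edge cases and to $-\phi(P_{u_1u_2})$ when $P$ is traversed once. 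It remains to identify the circle contributions with $\phi(C_i,u_i)$: in the sign-cycle orientation one of $C_1,C_2$ is the excycle rooted at its $P$-endpoint and the other the incycle rooted at its $P$-endpoint, so $\phi(W_{C_i})$ equals $\pm\phi(C_i,u_i)$, the sign being governed by the incycle/excycle distinction; the half-edge contributions are read off from \eqref{E:ultrawalkgainsum} and \eqref{E:walkgainsum2}, where a half edge at $u_1$ occurs at both ends of $W$ and, since the enclosed closed subwalk has sign $-1$, its two occurrences add rather than cancel, producing the coefficient $2$ on $\phi(C_1,u_1)$. Collecting terms and absorbing the residual global sign into ``(or its negative)'' gives the three displayed formulas.

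\textbf{Main obstacle.}
I expect the delicate point to be precisely this last reconciliation of signs in the handcuff case: one must pin down the canonical sign-cycle orientation (which negative figure is the incycle and which the excycle, and how $P$ is oriented coherently with the circle edges at $u_1$ and $u_2$) and then push all the $\pm1$ factors through the repeated use of Lemma \ref{L:sumwalkgains} so that the pieces reassemble into exactly the stated coefficients, in particular the asymmetry between the factor $1$ on a negative-circle term and the factor $2$ on a half-edge term. Everything else is a routine, if somewhat lengthy, unwinding of \eqref{E:walkgainsum} and \eqref{E:ultrawalkgainsum}.
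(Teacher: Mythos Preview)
Your approach is essentially the paper's: decompose the circuit walk via Lemma \ref{L:sumwalkgains}, reverse the second pass through $P$ with Lemma \ref{L:revwalkgain}, and read off the half-edge terms from \eqref{E:ultrawalkgainsum}. Your well-definedness argument (cyclic shift via $W=AB\mapsto BA$ and reversal via Lemma \ref{L:revwalkgain}) is more explicit than the paper's, which simply says the independence of circuit walk ``follows from the formulas.''

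There is one difference worth noting, and it bears exactly on what you call the main obstacle. Before computing, the paper sign-switches so that $P$ is all positive, which is legitimate by Lemma \ref{L:swwalkgain}. With $\s(P)=+1$ the iterated Lemma \ref{L:sumwalkgains} gives directly
\[
\phi(C_1) - \phi(P) - \phi(C_2) + \phi(P\inv) = \phi(C_1) - \phi(C_2) - 2\phi(P),
\]
with no residual $\s(P)$ on the $C_2$ term. In your version, keeping $\s(P)$ generic, the $\phi(P)$ terms do collapse to $-2\phi(P)$ as you say (since $\s(P)\cdot(-\s(P))=-1$), but the $C_2$ term carries a factor $-\s(P)$ rather than $-1$. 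You then have to argue that this extra sign is exactly cancelled by the incycle/excycle discrepancy between $\phi(W_{C_2})$ and $\phi(C_2,u_2)$, and that the net effect is a single global sign. That argument can be made, but it is the nontrivial sign-chase you anticipate; the paper sidesteps it entirely with the preliminary switch. If you adopt that one move, the ``delicate point'' you flag largely evaporates.
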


\begin{proof}
If $C$ is a positive circle, this is obvious.  
Thus, assume $C$ is a handcuff with connecting path $P=P_{u_1u_2}$ from $u_1$ to $u_2$.  By sign switching make $P$ positive.  Note that $\phi(C_i)$ for a negative circle is computed by taking a walk on it from $u_i$ to $u_i$.  For a half edge $\phi(C_i)$ is computed by taking a walk from $u_i$.

First, consider the case where $C_1$ and $C_2$ are circles.  
We apply Lemma \ref{L:sumwalkgains} to $W = C_1PC_2P\inv$ but we have to be careful about signs.  
We get 
\begin{align*}
\phi(W) &= \phi(C_1) + \s(C_1)\phi(P) + \s(C_1P)\phi(C_2) + \s(C_1PC_2)\phi(P\inv) 
\\&= \phi(C_1) - \phi(P) - \phi(C_2) + \phi(P\inv)
\\&= \phi(C_1) - \phi(C_2) - 2\phi(P)
\end{align*}
because  $\phi(P\inv) = -\phi(P)$ by Lemma \ref{L:revwalkgain}.

If we interchanged the roles of $C_1$ and $C_2$, taking the walk $W' = C_2P\inv C_1P$, we would get the negated gain $\phi(C_2) - \phi(P\inv) - \phi(C_1) + \phi(P) = \phi(C_2) - \phi(C_1) + 2\phi(P)$; that explains the ambiguity of negation.

Second, consider the case where $C_1=\{e_1\}$ for a half edge $e_1$ and $C_2$ is a circle.  
Take the circuit ultrawalk $W = C_1u_1Pu_2C_2u_2P\inv u_1C_1$.  
We initially walk on $C_1$ from $e_1$ to $u_1$, which is $C_1\inv$ because $C_1$ is an excycle for computing $\phi(C_1)$.
The calculation gives 
\begin{align*}
\phi(W) &= \phi(C_1\inv) + \s(C_1)\phi(P) + \s(C_1P)\phi(C_2) 
\\&\quad + \s(C_1PC_2)\phi(P\inv) + \s(C_1PC_2P\inv)\phi(C_1)
\\&= -\s(C_1)\phi(C_1) - \phi(P) - \phi(C_2) + \phi(P\inv) + \phi(C_1)
\intertext{by Lemma \ref{L:revwalkgain} applied to $C_1$,}
&= 2\phi(C_1) - \phi(C_2) - 2 \phi(P).
\end{align*}

Finally, consider the case where $C_1 = \{e_1\}$ and $C_2 = \{e_2\}$ are both half-edge figures.  
Take the circuit ultrawalk $W = e_1u_1Pu_2e_2$.  
Then 
\begin{align*}
\phi(W) 
&= \phi(C_1\inv) + \s(C_1)\phi(P) + \s(C_1P)\phi(C_2) 
\\&= \phi(C_1) - \phi(P) - \phi(C_2) .
\end{align*}

Independence of the choice of circuit walk follows from the formulas for $\phi(C)$.
\end{proof}

Now we can make the key definition.

\begin{defn}[Hyperbalance]\label{D:neutral}
A sign circuit is \emph{neutral} if its gain is $0$.  
An arbitrary edge set or subgraph is \emph{hyperbalanced} if every sign circuit in it is neutral; otherwise it is \emph{hyperfrustrated}.  
\end{defn}

Neutrality is the fundamental fact about an edge set.  It is independent of orientation, because by Lemmas \ref{L:swwalkgain} and \ref{L:revwalkgain} neutrality of a sign circuit is independent of orientation.  The list of neutral sign circuits is the essential datum upon which we base the definition of the matroid of a gain signed graph.

\subsubsection{Gain switching}\

Gains can be switched by a gain-switching function, briefly \emph{gain switcher}, which is a function $\theta: V \to \K^+$ (until Section \ref{sec:abstract}).  The gain function $\phi$ switches to $\phi^\theta$ defined on an oriented edge $e=e_{vw}$ by 
\begin{align*}
\phi^\theta(e) &:= \tau(v,e)\theta(v) + \phi(e) + \tau(w,e)\theta(w)
\\&	= \phi(e) + \tau(v,e)[\theta(v) - \s(e)\theta(w)].
\end{align*}
For example, if $\theta$ is a constant, then $\phi^\theta(e) = \phi(e) + \tau(v,e)(1-\s(e))\theta$.  
For a half edge $e=e_v$, 
$$
\phi^\theta(e) := \tau(v,e)\theta(v) + \phi(e).
$$
Gain switching has no effect on a loose edge.

Very similar is the concept of a \emph{gain potential} for an edge set $S$, which is a mapping $\bar\theta: V \to \K^+$ such that $$\phi(e_{vw}) = \tau(v,e)\bar\theta(v) + \tau(w,e)\bar\theta(w)$$ for every link or loop in $S$, $\phi(e_v) = \tau(v,e)\bar\theta(v)$ for every half edge in $S$, and $\phi(e_\eset) = 0$ for every loose edge in $S$.  A gain potential for $S$ is the negative of a gain switcher that switches $S$ to all neutral gains.  For a gain switcher $\theta$ we define 
\begin{align*}
E(\theta) &:= (\phi^\theta)\inv(0) 
\\&\,= \{ e_{vw} \in E: \phi(e_{vw}) = -\tau(v,e)\theta(v) - \tau(w,e)\theta(w) \} 
\\&\qquad \cup \{ e_v \in E: \phi(e_v) = -\tau(v,e)\theta(v) \} 
\\&\qquad \cup E^{00},
\end{align*}
i.e., it is the largest edge set such that $-\theta$ is a gain potential for these edges.  
Combining gain and sign potentials, $E(\theta, \zeta) := E(\theta) \cap E(\zeta)$, which is the set of edges with sign determined by $\zeta$ and gain by $\theta$.

\begin{lem}\label{L:swgainswalk}
Switching the gains does not change the gain sum of a positive closed walk or an ultrawalk with a half edge at each end.  More generally, switching the gains on a walk or ultrawalk $W$ from $u_0$ to $u_l$ ($l\geq0$) gives switched gain 
\begin{align*}
\phi^\theta(u_0e_1\cdots u_l) &= -\theta(u_0) + \phi(W) + \s(W)\theta(u_l),
\\
\phi^\theta(e_0u_0e_1 \cdots u_l) &= -\theta(u_0) + \phi(W),
\\
\phi^\theta(u_0e_1 \cdots u_le_{l+1}) &= \phi(W) + \s(W_{0l})\theta(u_l),
\\
\phi^\theta(e_0u_0e_1 \cdots u_le_{l+1}) &= \phi(W_{0l}),
\end{align*}
\end{lem}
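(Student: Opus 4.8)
The plan is to reduce all four identities to the single computation for an open walk $W = u_0e_1u_1\cdots e_lu_l$, since the ultrawalk cases differ only by the boundary half-edge terms, which switching handles separately. Starting from the second form of \eqref{E:walkgainsum}, I would compute $\phi^\theta(W)$ by substituting $\phi^\theta(e_i) = \phi(e_i) + \tau(u_{i-1},e_i)[\theta(u_{i-1}) - \s(e_i)\theta(u_i)]$ into the sum $-\sum_{i=1}^l \phi^\theta(e_i)\,\s(W_{0,i-1})\,\tau(u_{i-1},e_i)$. The $\phi(e_i)$ part reproduces $\phi(W)$ exactly. In the remaining part, note $\tau(u_{i-1},e_i)^2 = 1$, so the $i$th term contributes $-\s(W_{0,i-1})[\theta(u_{i-1}) - \s(e_i)\theta(u_i)] = -\s(W_{0,i-1})\theta(u_{i-1}) + \s(W_{0,i})\theta(u_i)$, using $\s(W_{0,i}) = \s(W_{0,i-1})\s(e_i)$. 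Summing over $i$, the total telescopes to $-\s(W_{0,0})\theta(u_0) + \s(W_{0,l})\theta(u_l) = -\theta(u_0) + \s(W)\theta(u_l)$, which gives the first identity.

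For the ultrawalk identities I would invoke the definition \eqref{E:ultrawalkgainsum}. When there is an initial half edge $e_0$ at $u_0$, the extra term is $\tau(u_0,e_0)\phi(e_0)$, and switching sends $\phi(e_0) \mapsto \phi(e_0) + \tau(u_0,e_0)\theta(u_0)$, so the term becomes $\tau(u_0,e_0)\phi(e_0) + \theta(u_0)$, i.e.\ it acquires exactly $+\theta(u_0)$. Added to the interior contribution $-\theta(u_0) + \phi(W_{0l}) + \s(W_{0l})\theta(u_l)$ from the first identity, the $\theta(u_0)$ cancels: that yields the second and fourth identities (the fourth also losing the $\s(W_{0l})\theta(u_l)$ term to the final half edge, see below). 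When there is a final half edge $e_{l+1}$ at $u_l$, the extra term in \eqref{E:ultrawalkgainsum} is $-\phi(e_{l+1})\s(W_{0l})\tau(u_l,e_{l+1})$; switching sends $\phi(e_{l+1}) \mapsto \phi(e_{l+1}) + \tau(u_l,e_{l+1})\theta(u_l)$, so this term picks up $-\s(W_{0l})\tau(u_l,e_{l+1})^2\theta(u_l) = -\s(W_{0l})\theta(u_l)$, which cancels the $+\s(W_{0l})\theta(u_l)$ coming from the interior walk. That handles the third identity, and combining both boundary cancellations gives the fourth.

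Finally, the first sentence of the lemma is a corollary: for a positive closed walk $u_l = u_0$ and $\s(W) = +1$, so the first formula gives $\phi^\theta(W) = -\theta(u_0) + \phi(W) + \theta(u_0) = \phi(W)$; for an ultrawalk with a half edge at each end, the fourth formula reads $\phi^\theta(W) = \phi(W_{0l})$, which equals $\phi(W)$ since the half-edge terms in \eqref{E:ultrawalkgainsum} are unaffected. The only mild subtlety — and the one place I would be careful — is bookkeeping the sign conventions at the boundary half edges: one must keep straight that $\tau$ of a half edge is squared away in the switched term, and (as noted after Lemma \ref{L:swwalkgain}) that when $W$ is closed the initial-vertex switch also touches $e_l$, so the two occurrences of $\theta(u_0)$ must be collected consistently. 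Everything else is the routine telescoping above.
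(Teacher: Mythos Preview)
Your telescoping computation for the open walk is correct and is essentially the paper's argument; the paper merely organizes it differently, carrying out the calculation for the full ultrawalk (both half edges present) in one pass and then noting which boundary terms drop when a half edge is absent.

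There is, however, a genuine error in your final paragraph. You assert that for an ultrawalk with a half edge at each end, $\phi(W_{0l}) = \phi(W)$ ``since the half-edge terms in \eqref{E:ultrawalkgainsum} are unaffected.'' This is false: by \eqref{E:ultrawalkgainsum}, $\phi(W) = \tau(u_0,e_0)\phi(e_0) + \phi(W_{0l}) - \phi(e_{l+1})\s(W_{0l})\tau(u_l,e_{l+1})$, which differs from $\phi(W_{0l})$ whenever either half edge has nonzero gain. Your own boundary computation (and the paper's) in fact yields $\phi^\theta(W) = \phi(W)$ here, exactly as the paper's proof concludes. Likewise, if you follow your boundary bookkeeping through, the initial-half-edge case produces $\phi(W) + \s(W_{0l})\theta(u_l)$ and the terminal-half-edge case produces $-\theta(u_0) + \phi(W)$: these are the right-hand sides of the \emph{third} and \emph{second} displayed formulas, respectively, not the second and third as you claim. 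In short, your computations are right but they do not match the lemma as printed; the second and third lines are swapped and the fourth should read $\phi(W)$. You should have flagged the discrepancy rather than manufacturing the false identity $\phi(W_{0l}) = \phi(W)$ to reconcile them.
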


\begin{proof}
We check by a calculation, which we present for an ultrawalk with a half edge at each end:
\begin{align*}
\phi^\theta(W) &= \tau(u_0,e_0)[\phi(e_0) + \tau(u_0,e_0)\theta(u_0)] 
\\&\quad - \sum_{i=1}^l [\tau(u_{i-1},e_i)\theta(u_{i-1}) + \phi(e_i) + \tau(u_i,e_i)\theta(u_i)] \s(W_{0,i-1}) \tau(u_{i-1},e_i) 
\\&\quad - [\tau(u_{l},e_{l+1})\theta(u_{l}) + \phi(e_{l+1})] \s(W_{0l}) \tau(u_{l},e_{l+1})
\\&= \theta(u_0) - \sum_{i=1}^l [\theta(u_{i-1}) + \tau(u_i,e_i)\tau(u_{i-1},e_i) \theta(u_i)] \s(W_{0,i-1})
\\&\quad - \theta(u_{l})\s(W_{0l}) + \phi(W) 
\\&= \theta(u_0) - \sum_{i=1}^l \theta(u_{i-1}) \s(W_{0,i-1})
	+ \sum_{i=1}^l \theta(u_i) \s(W_{0i}) - \theta(u_{l})\s(W_{0l})
	+ \phi(W) 
\\&= [\theta(u_0) - \theta(u_{l})\s(W_{0l})] - \theta(u_0) + \theta(u_l) \s(W_{0l}) + \phi(W).
\end{align*}
The two terms in square brackets apply when there is an initial half edge $e_0$ (the first term) or a terminal half edge $e_{l+1}$ (the second term).  The stated formulas follow from this.

Evidently, $\phi^\theta(W)$ equals $\phi(W)$ when $W$ is closed and positive and also when it begins and ends with a half edge.
\end{proof}

The effect of switching on the gain of a walk does not depend on orientation.

\begin{thm}\label{P:swhyperbal}
Switching of signs and gains preserves the property of hyperbalance or hyperunbalance of every edge set.
\end{thm}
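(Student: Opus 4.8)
The plan is to reduce the statement to a fact about a single circuit walk of a single sign circuit under each of the two kinds of switching. Recall (Definition~\ref{D:neutral}) that an edge set $S$ is hyperbalanced precisely when every sign circuit contained in $S$ is neutral, i.e.\ has gain $0$; so it is enough to show that the neutrality of an arbitrary sign circuit $C$ is preserved by switching. Two things simplify matters. First, the family of sign circuits of $\Ups$ is determined by the underlying signed graph $(\G,\s)$ alone---it is assembled from positive circles, negative figures, loose edges, and connecting paths---and sign switching fixes this family setwise because it preserves the signs of circles and leaves half edges negative, while gain switching fixes it trivially. Second, by Proposition~\ref{L:gainsum} the gain $\phi(C)$ is computed from any circuit walk $W$ of $C$ and is well defined up to sign, so ``$C$ is neutral'' is exactly the condition $\phi(W)=0$, a condition insensitive to replacing $\phi(W)$ by $-\phi(W)$. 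Finally, a general switching of signs and gains is carried out as a sign switching followed by a gain switching, and neutrality is orientation-independent, so it suffices to treat the two operations in turn.

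For sign switching this is immediate. Fix a circuit walk $W$ of $C$. By Lemma~\ref{L:swwalkgain} the gain of $W$ is unchanged by sign switching, except that it is negated when the initial vertex of $W$ is switched. Either way $\phi(W)=0$ if and only if $\phi^\zeta(W)=0$, so $C$ is neutral in $\Ups$ iff it is neutral in the sign-switched gain signed graph.

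For gain switching the point is that the circuit walk of every sign circuit has one of exactly two shapes covered by the invariance clause of Lemma~\ref{L:swgainswalk}. If $C$ is a positive circle or a handcuff whose two negative figures are circles, then its circuit walk $W$ is a \emph{closed} walk; choosing the orientation of $C$ that makes it a sign cycle makes $W$ coherent, and a coherent closed walk has $\s(W)=+1$ by the computation in Section~\ref{sec:orient}, hence $W$ is a positive closed walk. If at least one negative figure of $C$ is a half edge, then $W$ begins with a half edge and ends with a half edge. In both cases Lemma~\ref{L:swgainswalk} gives $\phi^\theta(W)=\phi(W)$; and if $C$ is a loose edge, gain switching does nothing to it and neutrality is the unaltered condition $\phi(e)=0$. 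Thus neutrality of $C$ survives gain switching as well.

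Combining the two cases, every switching of signs and gains sends neutral sign circuits to neutral sign circuits and non-neutral to non-neutral while fixing the collection of sign circuits; hence an edge set is hyperbalanced in $\Ups$ if and only if it is hyperbalanced after switching, and likewise for hyperfrustration. The only real work is the case analysis in the gain-switching paragraph: one must verify that the various circuit walks described in Section~\ref{sec:orient}---once around a circle, crossing over at a handcuff vertex, through a connecting path in both directions, or running from one half edge to another---are exactly the positive closed walks and half-edge-to-half-edge ultrawalks to which Lemma~\ref{L:swgainswalk} applies verbatim. Everything else follows directly from Lemmas~\ref{L:swwalkgain} and \ref{L:swgainswalk} and Proposition~\ref{L:gainsum}.
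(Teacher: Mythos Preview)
Your proof is correct and follows essentially the same approach as the paper's: the paper's proof says sign switching is ``obvious'' (your Lemma~\ref{L:swwalkgain} argument) and then invokes Lemma~\ref{L:swgainswalk} for gain switching. You supply the case analysis the paper elides---verifying that every circuit walk is either a positive closed walk or an ultrawalk with a half edge at each end---which is exactly what is needed to apply Lemma~\ref{L:swgainswalk}, so your version is a fleshed-out form of the same argument.
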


\begin{proof}
For sign switching this is obvious.  The gain sum of a sign circuit equals the gain sum of a circuit walk on it, and by Lemma \ref{L:swgainswalk} switching preserves that sum, so a neutral or non-neutral sign circuit remains neutral or non-neutral, respectively.
\end{proof}

For stating the next lemma we define a \emph{pseudotree} to be a tree or a tree with an attached half edge.  A  \emph{pseudoforest} is a graph whose components are pseudotrees.  If $T$ is a pseudotree with a half edge $e$ at vertex $v$, then $T_{re}$ denotes the path in $T$ from $r$ ending with $e$.

\begin{lem}\label{L:0gaintree}
Let $\Ups$ be a gain signed graph and $T$ a pseudoforest.  The gains can be switched to $\Ups^\theta$ in which $T$ has all gains $0$.  

For connected $T$, the gain-switching function is given by $\theta(v) := \s(T_{rv})[\theta_0 + \phi(T_{rv})]$, where $r$ is an arbitrarily fixed root vertex.  If $T$ is a tree, $\theta_0$ is any element of $\K$.  If $T$ has a half edge $e$ at vertex $v$, then $\theta_0 = \phi(T_{re})$.
\end{lem}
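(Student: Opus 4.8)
The plan is to reduce everything to the case of a connected pseudotree and then verify, by a direct computation using the walk-gain switching formula (Lemma~\ref{L:swgainswalk}), that the prescribed $\theta$ does the job. Since a pseudoforest is a disjoint union of pseudotrees and gain switching on one component does not interact with the others, it suffices to treat a single connected pseudotree $T$ with a fixed root $r$; I would state this reduction in one sentence and then concentrate on connected $T$.

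First I would verify the formula is well defined: for a tree, $T_{rv}$ is the unique $r$--$v$ path in $T$, so $\s(T_{rv})$ and $\phi(T_{rv})$ are unambiguous; for a pseudotree with half edge $e$ at $w$, the path $T_{re}$ is again unique, and the value $\theta(w)$ arising from the edge $e$ (viewed via the formula for the half edge) must be consistent with the value prescribed by the path $T_{rw}$ — this is exactly the role of the constraint $\theta_0 = \phi(T_{re})$, and I would check that the two expressions for $\theta(w)$ coincide. Next, the main step: take an arbitrary edge $f$ of $T$ and show $\phi^\theta(f)=0$. There are two cases. If $f=f_{vw}$ is a link or loop of the tree part, then $T_{rw}$ is $T_{rv}$ followed by $f$ (after orienting $f$ suitably), so by Lemma~\ref{L:sumwalkgains}, $\phi(T_{rw}) = \phi(T_{rv}) + \s(T_{rv})\phi(f_{vw})$, where the one-edge walk gain $\phi(f_{vw}) = -\tau(v,f)\phi(f)$ by \eqref{E:walkgainsum1}. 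Substituting the definition of $\theta$ into the switching formula $\phi^\theta(f) = \phi(f) + \tau(v,f)[\theta(v) - \s(f)\theta(w)]$ and using $\s(f) = \s(T_{rv})\s(T_{rw})$ (since $\s$ of a two-edge product of signs is multiplicative and $\s(T_{rw}) = \s(T_{rv})\s(f)$), the terms should telescope to $0$. If $f=e$ is the half edge at $w$, then $\phi^\theta(e) = \tau(w,e)\theta(w) + \phi(e)$; plugging in $\theta(w) = \s(T_{rw})[\phi(T_{re}) + \phi(T_{rw})]$ together with $\phi(T_{re}) = \phi(T_{rw}) + \s(T_{rw})\phi(u_0 e) $-type relation from \eqref{E:walkgainsum2}, and $\s(e) = -1$, should again collapse to $0$ after using $\theta_0 = \phi(T_{re})$.

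The cleanest way to organize the computation, and what I would actually write, is to avoid special-casing $f$ by using Lemma~\ref{L:swgainswalk} globally: for any vertex $v$, $\theta$ is designed so that $-\theta(r) + \phi(T_{rv}) + \s(T_{rv})\theta(v) = 0$ when $\theta(r) = \theta_0$ (respectively $-\theta(r) + \phi(T_{re}) = 0$ for the half-edge walk, forcing $\theta_0 = \phi(T_{re})$). That is, $\phi^\theta(T_{rv}) = 0$ for every $r$--$v$ path and $\phi^\theta(T_{re}) = 0$ for the half-edge ultrawalk — directly from the switching formulas in Lemma~\ref{L:swgainswalk}. Since every edge of a connected pseudotree lies on such a path/ultrawalk as its last edge, and a one-edge walk has gain $\pm\phi^\theta$ of that edge by \eqref{E:walkgainsum1}--\eqref{E:walkgainsum2}, vanishing of all these path gains forces every individual edge gain in $T$ to be $0$: concretely, $\phi^\theta(T_{rv}) - \s(T_{ru})\phi^\theta(f_{uv}) = \phi^\theta(T_{ru})$ where $f_{uv}$ is the last edge of $T_{rv}$ with parent $u$, so $0 - \s(T_{ru})\phi^\theta(f_{uv}) = 0$.

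The main obstacle I anticipate is bookkeeping of the sign factors $\s(T_{rv})$ and the orientation signs $\tau$: these formulas are "not invariant under sign switching" (as the authors warn), so one must be scrupulous about where $\tau(v,f)$ versus $\tau(w,f)$ appears, and about the relation $\tau(v,f)\tau(w,f) = -\s(f)$ when converting between edge gains and one-edge walk gains. A secondary subtlety is the freedom of $\theta_0$ in the tree case: I would remark that changing $\theta_0$ corresponds to adding a constant to all $\theta(v)$ weighted by $\s(T_{rv})$, which by Lemma~\ref{L:swgainswalk} (the "constant switcher" remark after the definition of $\phi^\theta$) shifts each edge gain by $\tau(v,f)(1-\s(f))\cdot(\text{const})$ — but on a tree all edges can still be made simultaneously $0$ because the shift is itself a coboundary; since we are asserting existence, not uniqueness, it is enough to exhibit the one choice, and the arbitrariness of $\theta_0$ is then a free bonus requiring no extra work.
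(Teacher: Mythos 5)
Your proposal is correct and takes essentially the same route as the paper: the paper runs the identical telescoping computation in the opposite direction, imposing $\phi^\theta(e)=0$ edge by edge along each root-to-vertex path, unrolling the recurrence to obtain $\s(T_{rw})\theta(w)=\theta(r)-\phi(T_{rw})$, and then using the half edge to pin down $\theta_0$, exactly the constraint you identify. One small point: your substitution via Lemma \ref{L:swgainswalk} forces $\s(T_{rv})\theta(v)=\theta_0-\phi(T_{rv})$, which agrees with the sign the paper's own proof derives but not with the $+$ in the lemma's displayed formula, so that discrepancy lies in the statement rather than in your argument.
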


\begin{proof}
We may assume $\Ups$ is connected and the root $r$ is fixed.  We prove that $\theta$ defined in the lemma is a switching function that makes all gains $0$.

Consider an edge $e_l=vw\in T$ whose endpoint farther from the root is $w$, and let $T_{rw} = u_0e_{01}\cdots u_{i-1}e_{i-1,l}u_l$ in $T$, where $r=u_0$, $e = e_{l-1,l}$, $v=u_{l-1}$, and $w=u_l$).  The switched gain is given by 
\begin{align*}
0 = \phi^\theta(e) = \tau(u_{l-1},e)\theta(u_{l-1}) + \phi(e) + \tau(u_l,e)\theta(u_l), 
\end{align*}
from which it follows that 
\begin{align*}
\s(T_{rw})\theta(w) &= \s(T_{ru_{l-1}})\theta(u_{l-1}) + \s(T_{ru_{l-1}})\tau(u_{l-1},e_{l-1,l})\phi(e_{l-1,l})
\\&= \theta(r) + \sum_{i=1}^l \s(T_{ru_{i-1}})\tau(u_{i-1},e_{i-1,i})\phi(e_{i-1,i})
\\&= \theta(r) - \phi(T_{rw})
\end{align*}
by \eqref{E:walkgainsum}.  This gives the value of $\theta(w)$, with an arbitrary constant $\theta(r)$ that we call $\theta_0$, provided there is no half edge in $T$.

If there is a half edge $e$ at vertex $v$, its switched gain is $0 = \phi^\theta(e) = \tau(v,e)\theta(v) + \phi(e),$ hence $\theta(v) = -\tau(v,e)\phi(e).$  Inserting this into the general formula for $\theta(v)$ gives the value of $\theta_0$:
$$
\theta_0 = \phi(T_{rv}) - \s(T_{rv})\tau(v,e)\phi(e) = \phi(T_{re})
$$
by \eqref{E:ultrawalkgainsum}.
\end{proof}

It is easy to see that, if $T$ has no half edge, any one choice of root gives all possible switching functions by varying $\theta_0$.  A similar proof establishes that the gains on $T$ can be set by gain switching to any desired values; we omit the details.  
We mention that if $T$ is all positive and is oriented as an out-arborescence from $r$, then $\phi(T_{rv})$ equals the sum of the gains of edges in $T_{rv}$.

\begin{thm}\label{L:hyperbalneutral}
The gain signed graph $\Ups$ is hyperbalanced if and only if its gains can be switched so all edges are neutral.
\end{thm}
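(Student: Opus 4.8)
The plan is to prove both implications. One direction is immediate: if $\Ups$ switches by a gain switcher $\theta$ to a gain signed graph $\Ups^\theta$ in which every edge is neutral, then by the walk‑gain formulas \eqref{E:walkgainsum} and \eqref{E:ultrawalkgainsum} every walk or ultrawalk all of whose edges are neutral has gain $0$, so by Proposition~\ref{L:gainsum} every sign circuit of $\Ups^\theta$ is neutral; that is, $\Ups^\theta$ is hyperbalanced, and since $\Ups$ and $\Ups^\theta$ switch to each other, Theorem~\ref{P:swhyperbal} gives that $\Ups$ is hyperbalanced. The substance is the converse. So assume $\Ups$ is hyperbalanced. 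A loose edge is itself a sign circuit, hence has gain $0$, and gain switching does not affect loose edges; thus loose edges may be discarded, and since switching acts componentwise we may take $\Ups$ connected with underlying signed graph $\S$. The idea is to choose a well‑adapted spanning subgraph $B$, use Lemma~\ref{L:0gaintree} to switch its gains all to $0$, and then read off from hyperbalance that every edge outside $B$ also acquires switched gain $0$.

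Fix a spanning tree $T$ of $\S$. If $\S$ is sign balanced, put $B:=T$; if $\S$ is unbalanced and has a half edge $h$, put $B:=T\cup\{h\}$; if $\S$ is unbalanced with no half edge, it contains a negative circle, and we choose $T$ to contain all but one edge $f$ of such a circle $C_0$ and put $B:=T\cup\{f\}$. In the first two cases $B$ is a pseudoforest, so Lemma~\ref{L:0gaintree} applies to $B$ directly; in the third case it applies to $T$, leaving the root value $\theta_0$ free, and we spend that freedom to make $f=e_{xy}$ neutral as well. By Lemma~\ref{L:0gaintree}, $\theta(v)=\s(T_{rv})[\theta_0+\phi(T_{rv})]$, so the gain‑switching formula makes $\phi^\theta(f)$ an affine function of $\theta_0$ with linear coefficient $\tau(x,f)[\s(T_{rx})-\s(f)\s(T_{ry})]$. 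Since the $x$--$y$ path in $T$ is $C_0\setminus\{f\}$ and $C_0$ is negative, $\s(T_{rx})\s(T_{ry})=\s(C_0\setminus\{f\})=-\s(f)$, whence this coefficient equals $2\tau(x,f)\s(T_{rx})=\pm2\neq0$ because $\K$ has characteristic $\neq 2$; so a unique value of $\theta_0$ makes $\phi^\theta(f)=0$, which we fix. In all three cases, after switching by $\theta$ the spanning subgraph $B$ has every gain $0$ and, in the unbalanced cases, contains a negative figure $N$ (namely $h$, or the negative circle $C_0$).

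It then remains to show $\phi^\theta(e)=0$ for every $e\notin B$. If $e$ is a link or loop, then $e$ together with the $T$-path joining its endpoints is a circle $C_e$ with $C_e\setminus\{e\}\subseteq T$, and by Lemma~\ref{L:sumwalkgains} the gain of a walk once around $C_e$ (from any root) is $\pm\phi^\theta(e)$. If $C_e$ is positive it is a sign circuit, so hyperbalance gives $\phi^\theta(e)=0$; if $C_e$ is negative — which forces $\S$ unbalanced, so $N$ exists — then $C_e$ and $N$, joined by a $T$-path, form a handcuff sign circuit whose gain, by Proposition~\ref{L:gainsum} together with the vanishing of the gains of $N$ and of the connecting path, reduces to $\pm\phi^\theta(e)$, again $0$ by hyperbalance. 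If $e$ is a half edge (possible only in the second case, and then $e\neq h$), then $\{e\}$ and $\{h\}$, joined by a $T$-path, form a handcuff whose gain, by Proposition~\ref{L:gainsum}, is $\pm\phi^\theta(e)$, so $\phi^\theta(e)=0$. (A loose edge outside $B$ already has gain $0$.) Hence $\phi^\theta\equiv0$, which proves the converse.

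The main obstacle is the third case. When $\S$ is unbalanced but has no half edge there is no spanning pseudoforest carrying enough information, so Lemma~\ref{L:0gaintree} cannot be applied to a spanning basis of $\bfF(\S)$ directly; instead one must isolate the correct one‑parameter family of switchers — indexed by the free root value $\theta_0$ — and verify that the extra edge $f$ of a spanning negative circle can be neutralized within it, which is the single point where the hypothesis that $\K$ has characteristic $\neq 2$ is used essentially. The remaining steps are routine applications of the walk‑gain calculus and the definition of a sign circuit.
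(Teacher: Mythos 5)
Your argument has one step that fails as stated, in your third case (unbalanced, no half edge): the claim that for a negative non-$B$ edge $e$ the circles $C_e$ and $N=C_0$, ``joined by a $T$-path, form a handcuff sign circuit.'' A handcuff requires its two negative figures to have at most one common vertex, but $C_e\setminus\{e\}$ and $C_0\setminus\{f\}$ are both paths in the same spanning tree $T$ and may share edges --- for instance, two parallel negative links $e,f$ between vertices $u$ and $v$ whose fundamental circles both contain the whole tree path $T_{uv}$. In that configuration $C_e\cup C_0$ is a theta graph, not a handcuff, and Proposition~\ref{L:gainsum} does not apply to it. The repair is short and is exactly the bifurcation the paper makes at this point: when the two negative circles meet in a path of positive length, the third circle of the theta graph (their symmetric difference) is positive, hence a sign circuit; it contains $e$ and $f$ and otherwise only tree edges, all of which are neutral after your switching, so its gain is $\pm\phi^\theta(e)\pm\phi^\theta(f)=\pm\phi^\theta(e)$, and hyperbalance gives $\phi^\theta(e)=0$. (The analogous worry does not arise in your second case, since a half edge meets a circle in at most one vertex, nor for negative loops.)

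Apart from this, the proof is correct and takes a mildly different route from the paper's. The paper sign-switches $T$ to all positive, shows every positive non-tree edge is neutral, shows that all negative links and loops (respectively, all half edges) acquire one common gain, and then removes that common gain by a constant switcher $\theta\equiv\frac12\phi_0$; you instead absorb one negative figure into the neutralized spanning subgraph $B$ by tuning the free root value $\theta_0$, and then neutralize every remaining edge against $B$. Both arguments invoke $\operatorname{char}\K\neq2$ exactly once: yours through the coefficient $\pm2$ of $\theta_0$, the paper's through the division by $2$.
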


\begin{proof}
Sufficiency is trivial.  For necessity, we may assume $\Ups$ is connected and, by suitably switching signs, has an all-positive spanning tree $T$.  By reorientation make all negative edges (including half edges) introverted.  By gain-switching as in Lemma \ref{L:0gaintree} we can make all tree gains $0$.  

First, we show all positive edges have gain $0$.  Let $e$ be such an edge, not in $T$.  The unique circle $C_T(e) \subseteq T \cup \{e\}$ is all positive, hence a sign circuit, hence it has gain $0$.  From the definition, $\phi(C_T(e)) = \pm\phi(e)$; that implies $\phi(e)=0$.

Next, we show all negative links and loops have the same gain.  Two such edges, $e$ and $f$, form circles $C_T(e)$ and $C_T(f)$, whose intersection may be either a path of positive length, or at most a single vertex.  In the former case, $C_T(e) \cup C_T(f)$ is a theta graph in which there is a positive circle $C$ containing both negative edges.  Take a walk $W = u_0eu_1 \cdots u_{i-1}fu_i \cdots u_0$ around $C$; then $\phi(W) = -\tau(u_0,e)\phi(e) - \s(W_{0,i-1})\tau(u_{i-1},f)\phi(f) = \phi(e) - \phi(f)$.  As $C$ is hyperbalanced, $\phi(e)=\phi(f)$.  In the latter case, $T \cup \{e,f\}$ contains a handcuff circuit $C$ in which $e$ and $f$ belong to opposite circles; by Proposition \ref{L:gainsum} a circuit walk around $C$ gives the same conclusion that $\phi(e)=\phi(f)$.  

Consider two half edges, $e$ and $f$.  They are joined by a path $P$ of length $l$ in $T$ with which they make an ultrawalk $W$, which is a circuit walk on the circuit $C = P \cup \{e,f\}$.  As in the previous case, by Proposition \ref{L:gainsum} $\phi(W) = \phi(e) - \phi(f)$ so $\phi(e)=\phi(f)$ by hyperbalance.  That is, all half edges have the same gain.

Now compare a half edge $e$ to a negative link or loop $f$.  $T \cup \{e,f\}$ contains a handcuff circuit $C$ in which $f$ belongs to a negative circle and $\phi(C) = 2\phi(e) - \phi(f) = 0$.  Thus, the gain of a half edge equals half that of a negative link or loop.

Let the common value of $\phi(e)$ for all negative links and loops be $\phi_0$ and switch by the constant switching function $\theta \equiv \frac12 \phi_0$; or if there are no such edges, let $\phi_1$ be the gain of a half edge and switch by the constant function $\theta \equiv \phi_1$.  This reduces the gains of all negative edges to $0$.

Hyperbalance implies that all loose edges are neutral.  Thus, all edges have gain $0$ after gain switching.

Finally, switch the signs back with the original sign switcher.  This does not change the gains, which remain identically $0$.
\end{proof}

Now that we have three separate operations, two switchings and a reorientation, that preserve essential properties, it is desirable to know how they commute.  
Sign and gain switching, on the other hand, do not commute.  Define an action of $\zeta: V \to \{\pm1\}$ on $\theta: V \to \K^+$ by $\theta^\zeta(v) := \zeta(v)\theta(v)$.

\begin{prop}[Commutation of Switching and Reorientation]\label{P:commut}
The commutation relations among sign switchers $\zeta$, gain switchers $\theta$, and reorientation functions $\rho$ are  $\zeta\theta = \theta^\zeta \zeta$, $\zeta\rho = \rho\zeta$, and $\theta\rho = \rho\theta$.
\end{prop}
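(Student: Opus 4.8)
The plan is to prove the three relations by a direct edge-by-edge check, exploiting the fact that each of the three operations is local: it transforms the data $\bigl(\s(e),\phi(e),\tau(v,e),\tau(w,e)\bigr)$ of a link or loop $e=e_{vw}$ using only that data together with the values of the relevant switcher at $v,w$ (or of $\rho$ at $e$). Loose edges are fixed by all three operations, so nothing is needed there; a half edge $e_v$ is the one-ended specialization of the link case (with $\tau(v,e)$ the only end datum and the sign of $e$ held fixed at $-1$), so it suffices to verify each identity for links and loops and then remark that suppressing the $w$-end gives the half-edge case.

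First I would dispatch the two ``easy'' relations. For $\zeta\rho=\rho\zeta$: sign switching multiplies $\s(e_{vw})$ by $\zeta(v)\zeta(w)$ and $\tau(v,e)$ by $\zeta(v)$ and leaves $\phi$ alone, while reorientation multiplies both $\tau(v,e)$ and $\phi(e)$ by the same scalar $\rho(e)\in\{\pm1\}$ and leaves $\s$ alone; since the scalars $\zeta(v)$ and $\rho(e)$ commute, either composite sends $\s(e_{vw})\mapsto\zeta(v)\s(e)\zeta(w)$, $\tau(v,e)\mapsto\zeta(v)\rho(e)\tau(v,e)$, and $\phi(e)\mapsto\rho(e)\phi(e)$. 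For $\theta\rho=\rho\theta$: one must note that the gain-switching action is defined through $\tau$, which $\rho$ changes. Applying $\theta$ and then $\rho$ turns the gain of $e_{vw}$ into $\rho(e)\bigl[\tau(v,e)\theta(v)+\phi(e)+\tau(w,e)\theta(w)\bigr]$ while $\tau(v,e)\mapsto\rho(e)\tau(v,e)$; applying $\rho$ and then $\theta$ turns it into $\rho(e)\tau(v,e)\theta(v)+\rho(e)\phi(e)+\rho(e)\tau(w,e)\theta(w)$, with the same change to $\tau$. These agree because $\rho$ scales $\tau$ and $\phi$ by one and the same factor, and the gain-switching formula is ``$\tau$-linear'' in just the way that makes it equivariant under that scaling.

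The relation requiring care is $\zeta\theta=\theta^\zeta\zeta$, and this is where I expect the only genuine bookkeeping. Applying $\zeta$ first replaces $\tau(v,e)$ by $\zeta(v)\tau(v,e)$, so the subsequent gain switch by $\theta$ produces gain $\zeta(v)\tau(v,e)\theta(v)+\phi(e)+\zeta(w)\tau(w,e)\theta(w)$, with signature $\zeta(v)\s(e)\zeta(w)$ and $\tau$ multiplied by $\zeta$. Going the other way, the gain switch by $\theta^\zeta$ produces $\tau(v,e)\theta^\zeta(v)+\phi(e)+\tau(w,e)\theta^\zeta(w)$, which equals the same expression because $\theta^\zeta(v)=\zeta(v)\theta(v)$; the subsequent sign switch then adjusts the signature to $\zeta(v)\s(e)\zeta(w)$ and $\tau$ by $\zeta$ without touching the gains. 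The two results coincide (and the computation is symmetric under the opposite composition convention, since there $\zeta(v)^2=1$ does the cancelling). The conceptual point — which explains why $\zeta$ and $\theta$ fail to commute on the nose — is that sign switching carries the hidden orientation flip $\tau(v,e)\mapsto\zeta(v)\tau(v,e)$, and the gain-switching formula sees $\tau$; pushing $\zeta$ past $\theta$ therefore forces $\theta$ to be read against the flipped orientation, which is exactly the conjugate $\theta^\zeta$.
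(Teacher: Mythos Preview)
Your proof is correct and follows essentially the same approach as the paper: a direct edge-by-edge verification, with the nontrivial relation $\zeta\theta=\theta^\zeta\zeta$ handled by the same computation showing that the $\zeta$-induced change $\tau(v,e)\mapsto\zeta(v)\tau(v,e)$ forces $\theta$ to be read as $\theta^\zeta$. The paper actually leaves the two relations involving $\rho$ to the reader (with a hint), so your write-up is more complete on that front.
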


\begin{proof}
We leave to the reader the proof that reorientation commutes freely with both switchings.  Hint:  Define a reorientation function $\rho: E \to \{\pm1\}$ such that $\phi^\rho(e) = \rho(e)\phi(e)$ and $\tau^\rho(v,e) = \rho(e)\tau(v,e)$ (assuming $v$ is an endpoint of $e$), i.e., $\rho$ acts on $\phi$ and $\tau$ by multiplication.

Under the action of $\zeta$, the orientation $\tau$ required for gain switching changes to $\tau^\zeta := \zeta\tau$, i.e., $\tau^\zeta(v,e) = \zeta(v)\tau(v,e)$.  The actions of $\theta\zeta$ and $\zeta\theta$ on $\phi$ are given by 
\begin{align*}
\phi^{\theta\zeta}(e_{vw}) := (\phi^\theta)^\zeta(e_{vw}) &= \phi^\theta(e_{vw})
\intertext{because sign switching does not change gains,}
&= \tau(v,e)\theta(v) + \phi(e_{vw}) + \tau(w,e)\theta(w) 
\intertext{for $e = e_{vw}$, while}
\phi^{\zeta\theta}(e_{vw}) := (\phi^\zeta)^\theta(e_{vw}) &= \tau^\zeta(v,e)\theta(v) + \phi^\zeta(e_{vw}) + \tau^\zeta(w,e)\theta(w) 
\\&
= \zeta(v)\tau(v,e)\theta(v) + \phi(e_{vw}) + \zeta(w)\tau(w,e)\theta(w) 
\\&
= \tau(v,e)\theta^\zeta(v) + \phi(e_{vw}) + \tau(w,e)\theta^\zeta(w) 
\\&
= \phi^{\theta^\zeta}(e_{vw}) = \phi^{\theta^\zeta \zeta}(e_{vw}) 
\end{align*}
according to the preceding calculation.  
Note that in $\phi^{\theta^\zeta \zeta}$, $\zeta$ acts on $\tau$ after $\theta^\zeta$ is applied to $\phi$, while in $\phi^{\zeta\theta}$, $\zeta$ acts on $\tau$ before $\theta^\zeta$ is applied.  The action on $\tau$ is why $\zeta\theta = \theta^\zeta \zeta \neq \theta^\zeta$.
\end{proof}

Proposition \ref{P:commut} suggests that the full switching group, which combines both kinds of switching, might be a semidirect product $(\K^+)^V \rtimes \{+1,-1\}^V$, but it is not; the associative law fails, as the reader can verify.

\subsection{The extra point}\label{sec:extra}

In Section \ref{sec:rank} we will define a matroid on the edge set of $\Ups$ that has a one-point extension by an \emph{extra point} $e_\infty$, which is not part of the graph.  
The extension is necessary for a full understanding of the matroid and its canonical hyperplane representation.  
(We call it a point because it is a point in the geometrical interpretation of our matroid; in the projective dual interpretation of Section \ref{sec:projhyp}, where matroid points are affine hyperplanes, $e_\infty$ corresponds to the infinite hyperplane.)  
We write $E_\infty = E \cup \{e_\infty\}.$  
The extra point, not being an edge, does not have a sign or gain.

\sectionpage
\section{The vector model}\label{sec:vector}

Our definition of the matroid of $\Ups$ is modeled on a mental picture of vectors over a field $\K$, so we begin the main work with that picture.  We are in $\K^{1+n}$ with coordinates $x_0,x_1,\ldots,x_n$, the coordinate $x_i$ corresponding to the vertex $v_i$.  The standard unit basis is $\{\e_0,\e_1,\ldots,\e_n\}$.  The $x_0$-coordinate is special: it contains gains.  There is a natural projection $\pi_0: \K^{1+n} \to \K^n$ by deleting the $x_0$-coordinate.

Here is the vector associated to an edge $e=e_{v_iv_j}$.  We assume an orientation $\tau$; reversing the orientation of $e$ negates the vector.
\[
\z(e) = \z_\tau(e) 
:= \phi(e)\e_0 + \tau(v_i,e)\e_i + \tau(v_j,e)\e_j 
= \begin{pmatrix} \phi(e) \\0 \\\vdots \\\tau(v_i,e) \\0 \\\vdots \\\tau(v_j,e) \\0 \\\vdots\end{pmatrix} ,
\]
where the nonzero rows are numbered 0, $i$, $j$, respectively.  This is for a link.  If $e$ is a loop, so $v_i=v_j$, the $\tau$ values are added together in row $i$ of the matrix form.  Thus, a positive loop has $x_i=0$ and a negative loop has $x_i=\pm2$.  
For a half edge $e=e_{v_i}$ we define 
\[
\z(e) = \z_\tau(e) 
:= \phi(e)\e_0 + \tau(v_i,e)\e_i
= \begin{pmatrix} \phi(e) \\0 \\\vdots \\\tau(v_i,e) \\0  \\\vdots\end{pmatrix} .
\]
For a loose edge $e=e_\eset$, 
\[
\z(e) = \z_\tau(e) 
:= \phi(e)\e_0 
= \begin{pmatrix} \phi(e) \\\0\end{pmatrix} 
\]
(which has ambiguous sign if $\phi(e) \neq 0$, but that will not affect any of our theory; for linear algebra with the vector of a loose edge, what matters is only whether the gain is $0$ or not).

This defines a mapping $\z: E \to \K^{1+n}$.  
We call the vectors corresponding to the edges of $\Ups$ the \emph{standard vector representation} of $\Ups$.  Note that $\pi_0\z = \x$, the signed-graph representation mapping of Theorem \ref{T:sgrep}.

The extra point $e_\infty$ corresponds to the vector $\z(e_\infty)=(1,\bf0)$; that correspondence extends $\z$ to a mapping $\z: E_\infty \to \K^{1+n}$.  

The \emph{incidence matrix} of $\Ups$ is the $(1+n)\times|E|$ matrix that has a column for each edge, in which is placed the standard vector $\bfz(e)$.  The \emph{extended incidence matrix} has an extra column $\z(e_\infty)$ for the extra point.
We mention incidence matrices because they have many uses, such as to define flows on the graph, but those uses are outside the scope of this article.

Given a sign switcher $\zeta$, define the diagonal switching matrix $D_\zeta := \left(\begin{matrix} 1 & \0\trans \\ \0 & \Diag(\zeta) \end{matrix}\right)$.  Evidently, this matrix is self-inverse.

\begin{lem}\label{L:vecprops}
Some elementary properties of the vector representation are:
\begin{enumerate}[{\rm(1)}]
\item $\z(e\inv) = -\z(e)$.
\item Switching signs by $\zeta$ multiplies vectors by $D_\zeta$: that is, $\z(e)$ becomes $D_\zeta \z(e)$.
\item Switching gains by $\theta$ changes $\z(e)$ to 
$$[\tau(v_i,e)\theta(v_i) + \tau(v_j,e)\theta(v_j)]\e_0 + \z(e)$$
for a loop or link $e=e_{ij}$ and to $\tau(v_i,e)\theta(v_i)\e_0 + \z(e)$ for a half edge $e=e_i$.  It has no effect on a loose edge.
\end{enumerate}
\end{lem}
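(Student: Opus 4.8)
The plan is to verify each of the three properties directly from the definition of $\z(e)=\z_\tau(e)$ given above, treating the cases of links/loops, half edges, and loose edges separately wherever the formulas differ. All three statements are ``unwind the definitions'' assertions, so the proof will be short; the only thing to be careful about is keeping track of which coordinate ($x_0$ versus $x_i$) each contribution lands in.

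For property (1), I would observe that reorienting $e$ negates $\tau(v,e)$ for every end $(v,e)$ of $e$ by definition of reorientation, and simultaneously negates $\phi(e)$ by the defining property $\phi(e\inv)=-\phi(e)$ of the gain function. Since $\z(e)$ is a $\K$-linear combination of $\e_0,\e_i,\e_j$ with coefficients $\phi(e),\tau(v_i,e),\tau(v_j,e)$ (and the analogous shorter expressions for half and loose edges), negating all of these coefficients negates the vector; for a loose edge the sign is ambiguous anyway, as already noted, so the claim is vacuous there. For property (2), sign switching by $\zeta$ leaves gains unchanged and sends $\tau(v,e)$ to $\zeta(v)\tau(v,e)$ at each end; hence the $x_0$-entry $\phi(e)$ is unchanged while the entry in row $i$ is multiplied by $\zeta(v_i)$ and the entry in row $j$ by $\zeta(v_j)$. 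This is exactly the effect of left-multiplication by $D_\zeta=\left(\begin{smallmatrix} 1 & \0\trans \\ \0 & \Diag(\zeta)\end{smallmatrix}\right)$, which fixes the $0$th coordinate and scales coordinate $k$ by $\zeta(v_k)$; the same holds for a half edge (only one scaled row) and trivially for a loose edge (no rows below the $0$th). The self-inverse remark follows since $\zeta(v)^2=1$.

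For property (3), I would substitute the definition of $\phi^\theta$. For a link or loop $e=e_{ij}$, $\phi^\theta(e)=\tau(v_i,e)\theta(v_i)+\phi(e)+\tau(v_j,e)\theta(v_j)$, while the $\tau$-entries in rows $i,j$ are untouched by gain switching; so $\z_{\tau}(e)$ computed with $\phi^\theta$ differs from the one computed with $\phi$ only in row $0$, by the added amount $[\tau(v_i,e)\theta(v_i)+\tau(v_j,e)\theta(v_j)]$, i.e.\ it becomes $[\tau(v_i,e)\theta(v_i)+\tau(v_j,e)\theta(v_j)]\e_0+\z(e)$, as claimed. For a half edge $e=e_i$ one uses $\phi^\theta(e)=\tau(v_i,e)\theta(v_i)+\phi(e)$ to get the extra term $\tau(v_i,e)\theta(v_i)\e_0$, and for a loose edge gain switching has no effect by definition, so $\z(e)$ is unchanged.

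I do not expect any genuine obstacle: the proof is a matter of matching the already-established switching formulas for $\phi$, $\tau$ against the coordinate expression for $\z$. The one place to stay alert is the loop case, where the two $\tau$-values add in the single row $i$; but this does not interact with the $x_0$-coordinate, so all three statements go through verbatim (with $v_j$ replaced by $v_i$ throughout), and I would simply remark on this rather than write it out. Thus the proof will consist of three short paragraphs, one per item, each essentially a one-line substitution with a parenthetical note for the half-edge and loose-edge cases.
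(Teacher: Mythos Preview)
Your proposal is correct and is exactly the intended approach: the paper states this lemma without proof, treating all three parts as immediate from the definitions of $\z$, reorientation, sign switching, and gain switching. Your direct verification by substitution is precisely what is implicitly expected, and your caution about the loop case (where the two $\tau$-values add in a single row) is appropriate but, as you note, harmless.
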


Given an orientation $\tau$ of $\S$ and a walk $W$ from $u_0=v_i$ to $u_l=v_j$, we define 
\begin{equation}
\z(W) := - \tau(u_0,e_1)  \sum_{k=1}^l \z(e_k) \prod_{m=1}^{k-1} [-\tau(u_m,e_m)\tau(u_m,e_{m+1})] .
\label{E:vecwalksum}
\end{equation}
For an ultrawalk $W$, add $\tau(u_0,e_0)\z(e_0)$ if $W$ begins with a half edge $e_0$ and add $-\tau(u_l,e_{l+1})\s(W_{0l})\z(e_{l+1})$ if $W$ ends with a half edge $e_{l+1}$.

\begin{lem}\label{L:walkvector}
For an ultrawalk $W = e_0u_0\cdots u_le_{l+1}$ that begins with a half edge $e_0$ at vertex $u_0$ and ends with a half edge $e_{l+1}$ at vertex $u_l$, we have
\begin{align*}
\z(W) &= \z(e_0)\tau(u_0,e_0) - \sum_{k=1}^l \z(e_k) \s(W_{0,k-1}) \tau(u_{k-1},e_k) - \z(e_{l+1})\s(W_{0l})\tau(u_l,e_{l+1})
\\&= \z(e_0)\tau(u_0,e_0) + \sum_{k=1}^l \z(e_k) \s(W_{0,k}) \tau(u_{k},e_k) - \z(e_{l+1})\s(W_{0l})\tau(u_l,e_{l+1}).
\end{align*}
If the ultrawalk begins at vertex $u_0$ (without $e_0$), omit the first term.  If it ends at vertex $u_l$ (without $e_{l+1}$), omit the last term.
\end{lem}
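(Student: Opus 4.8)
The plan is to unwind the definition \eqref{E:vecwalksum} of $\z(W)$ and collapse the telescoping product of $\tau$-values, using the orientation identity $\tau(u_{i-1},e_i)\tau(u_i,e_i) = -\s(e_i)$ from Section~\ref{sec:orient} together with multiplicativity of the sign function, $\s(W_{0,k}) = \s(W_{0,k-1})\s(e_k)$. Concretely, the first displayed formula reduces to the assertion that, for each $k$, the coefficient of $\z(e_k)$ in \eqref{E:vecwalksum},
$$
-\tau(u_0,e_1)\prod_{m=1}^{k-1}[-\tau(u_m,e_m)\tau(u_m,e_{m+1})],
$$
equals $-\s(W_{0,k-1})\tau(u_{k-1},e_k)$. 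I would prove this by induction on $k$: the case $k=1$ is the empty product, where both sides are $-\tau(u_0,e_1)$ (the empty product of signs being $1$); and if the partial product through the first $j-1$ brackets equals $-\s(W_{0,j-1})\tau(u_{j-1},e_j)$, then multiplying by the next bracket $[-\tau(u_j,e_j)\tau(u_j,e_{j+1})]$ and applying the two identities above yields $-\s(W_{0,j})\tau(u_j,e_{j+1})$, as desired. Substituting this coefficient into \eqref{E:vecwalksum}, and carrying along the two half-edge correction terms $\tau(u_0,e_0)\z(e_0)$ and $-\s(W_{0l})\tau(u_l,e_{l+1})\z(e_{l+1})$ exactly as they appear in the ultrawalk clause of the definition, produces the first formula; the statements about omitting the first or last term when $W$ lacks an initial or terminal half edge are literally the corresponding clauses of that definition.

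For the second displayed formula I would rewrite the coefficient of $\z(e_k)$ one more time with the same tools: since $\s(W_{0,k})\tau(u_k,e_k) = \s(W_{0,k-1})\s(e_k)\tau(u_k,e_k) = -\s(W_{0,k-1})\tau(u_{k-1},e_k)\tau(u_k,e_k)^2 = -\s(W_{0,k-1})\tau(u_{k-1},e_k)$, the two sums agree term by term, while the half-edge terms are unchanged. This gives the second equality.

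Everything here is a direct calculation, so I do not anticipate a real obstacle; the work is purely bookkeeping, and the one point that needs attention is that the brackets in \eqref{E:vecwalksum} encode coherence at the intermediate vertices $u_m$ where $e_m$ meets $e_{m+1}$ (so they involve $\tau(u_m,e_m)$ and $\tau(u_m,e_{m+1})$), whereas the factors $-\s(e_m)$ produced in the telescoping involve the two ends $\tau(u_{m-1},e_m),\tau(u_m,e_m)$ of a single edge; the index shift in the induction is precisely what reconciles these, with the leftover $\tau$ at the ``current'' vertex being the quantity carried forward. A minor further remark: if some $e_k$ is a loop then $u_{k-1}=u_k$, and $\tau(u_{k-1},e_k),\tau(u_k,e_k)$ must be read as the two formally distinct ends of the loop, for which $\tau(u_{k-1},e_k)\tau(u_k,e_k)=-\s(e_k)$ still holds by the conventions of Section~\ref{sec:orient}; under that reading the computation is unchanged, since the internal coordinate structure of $\z(e_k)$ is never examined, only the scalar coefficient multiplying it.
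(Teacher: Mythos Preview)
Your argument is correct and is essentially the same approach as the paper's: the paper's proof is the single sentence that the passage from the definition to the two displayed forms is the same telescoping computation already carried out for the scalar formula \eqref{E:walkgainsum}, and you have simply written that computation out in full. The induction on $k$ using $\tau(u_{m-1},e_m)\tau(u_m,e_m)=-\s(e_m)$ and the multiplicativity $\s(W_{0,k})=\s(W_{0,k-1})\s(e_k)$ is exactly what underlies the equality of the two expressions in \eqref{E:walkgainsum}, so there is no substantive difference.
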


\begin{proof}
The expressions for $\z(W)$ equal the definition for the same reason the two expressions in formula \eqref{E:walkgainsum} are equal.  
\end{proof}

\begin{prop}\label{T:closedwalkvector}
The vector $\z(W)$ of a positive closed walk or an ultrawalk with initial and terminal half edges is unchanged by gain switching.
\end{prop}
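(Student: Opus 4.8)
The plan is to mimic, at the level of the vector-valued walk sum $\z(W)$, the computation already carried out scalar-wise in Lemma \ref{L:swgainswalk}. The key observation is that $\z(W)$ is built from the edge vectors $\z(e_k)$ by exactly the same sign-correction coefficients (products of $-\tau\tau$ factors, i.e.\ $\s(W_{0,k-1})\tau(u_{k-1},e_k)$) that govern the scalar gain sum $\phi(W)$ in \eqref{E:walkgainsum}. By Lemma \ref{L:vecprops}(3), gain switching by $\theta$ changes $\z(e_k)$ only in the $\e_0$-coordinate, adding $[\tau(u_{k-1},e_k)\theta(u_{k-1}) + \tau(u_k,e_k)\theta(u_k)]\e_0$ for a link or loop, and $\tau(u_j,e)\theta(u_j)\e_0$ for a half edge, and nothing for a loose edge. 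Hence the change in $\z(W)$ under gain switching lies entirely in the $\e_0$-coordinate, and that coordinate-change is precisely $\phi^\theta(W) - \phi(W)$ computed on the same walk, because $\pi_0\z = \x$ and the $\e_0$-entry of $\z(e)$ is $\phi(e)$ while the coefficients are identical. In other words, $\pi_0(\z^\theta(W) - \z(W)) = \0$ trivially and the $\e_0$-component of $\z^\theta(W) - \z(W)$ equals $\phi^\theta(W) - \phi(W)$.

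First I would make that reduction precise: write $\z^\theta(W)$ for the walk vector computed with the switched gains $\phi^\theta$ and unchanged $\tau$ and $\s$, and observe from \eqref{E:vecwalksum} and Lemma \ref{L:walkvector} that $\z^\theta(W) - \z(W) = (\phi^\theta(W) - \phi(W))\,\e_0$, since the two walk vectors differ only in their $\e_0$-entries and those entries are assembled by the same formula as the scalar gain sums. Second, I would invoke Lemma \ref{L:swgainswalk}: it states exactly that $\phi^\theta(W) = \phi(W)$ when $W$ is a closed walk that is positive (so $\s(W) = +1$ and the $-\theta(u_0) + \s(W)\theta(u_l)$ boundary terms cancel since $u_l = u_0$), and also when $W$ is an ultrawalk with a half edge at each end (the last formula, $\phi^\theta(e_0u_0e_1\cdots u_le_{l+1}) = \phi(W_{0l})$, whose right side does not involve $\theta$). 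In either case the $\e_0$-coordinate difference vanishes, so $\z^\theta(W) = \z(W)$, which is the assertion.

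The only real bookkeeping is the first step — verifying that the difference $\z^\theta(W) - \z(W)$ genuinely has no component outside $\e_0$ and that its $\e_0$-component matches the scalar gain difference. This is routine: the non-$\e_0$ rows of every $\z(e_k)$ (the $\tau(u,e)$ entries) are untouched by gain switching, so those rows of $\z(W)$ are untouched; and the $\e_0$-row of $\z(W)$ is, by construction, literally $\phi(W)$ with the same $\tau$, $\s$ inputs, so after switching it becomes $\phi^\theta(W)$ with those same inputs. I expect no obstacle beyond matching the half-edge correction terms of Lemma \ref{L:walkvector} against the half-edge terms of \eqref{E:ultrawalkgainsum}, which the paper has already aligned. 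Thus the proposition reduces cleanly to Lemma \ref{L:swgainswalk} plus Lemma \ref{L:vecprops}(3), with no delicate estimates or case analysis.
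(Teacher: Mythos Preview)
Your proposal is correct and is essentially the same approach as the paper's proof, which consists of the single line ``See Lemma \ref{L:swgainswalk}.'' You have simply made explicit the reduction the paper leaves to the reader: gain switching alters only the $\e_0$-row of each $\z(e_k)$, the coefficients in \eqref{E:vecwalksum} match those in \eqref{E:walkgainsum}, so the $\e_0$-component of $\z^\theta(W)-\z(W)$ equals $\phi^\theta(W)-\phi(W)$, which Lemma \ref{L:swgainswalk} says vanishes in the two cases at hand.
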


\begin{proof}
See Lemma \ref{L:swgainswalk}.
\end{proof}

We can state walk vectors in the same form as we have stated edge vectors.  To emphasize the similarity we describe the walk vectors as if they were the vectors of fictitious edges.

\begin{prop}\label{T:walkvector}
\begin{enumerate}[\rm(I)]
\item For a walk $W$ from $v_i$ to $v_j$, the vector $\z(W)$ equals the vector $\z(e_{ij}) = \phi(W)\e_0 - \e_i + \s(W)\e_j$ of an edge $e_{ij}$ with sign $\s(W)$ and gain $\phi(W)$. 

In particular, for a closed walk $W$ from $v_i$ to $v_i$, the vector $\z(W)$ equals the vector $\phi(W)\e_0 - (1-\s(W))\e_i$ of a loop at $v_i$ with sign $\s(W)$ and gain $\phi(W)$.  

For a circuit walk $W$ on a sign circuit $C$, $\z(W) = \phi(C)\e_0$.

\item For an ultrawalk $W$ from $v_i$ to a terminal half edge, the vector $\z(W)$ equals the vector $\z(e_{i}) = \phi(W)\e_0 - \e_i$ of an introverted half edge $e_{i}$ with gain $\phi(W)$. 

\item For an ultrawalk $W$ from an initial half edge to $v_j$, the vector $\z(W)$ is $\phi(W)\e_0 + \s(W)\e_j$, which is the vector $\z(e_j)$ of a half edge $e_j$ with gain $\phi(W)$ and which is extraverted or introverted depending on the sign of $W$. 

\item For an ultrawalk $W$ that begins and ends with a half edge, the vector $\z(W)$ equals $\phi(W)\e_0$, the vector of a loose edge with gain $\phi(W)$.
\end{enumerate}
\end{prop}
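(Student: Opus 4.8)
The plan is to compute $\z(W)$ directly from Lemma~\ref{L:walkvector} (and its specialization~\eqref{E:vecwalksum} for ordinary walks), reading off the three coordinates that can be nonzero: the $x_0$-coordinate, the $x_i$-coordinate at the initial vertex, and the $x_j$-coordinate at the terminal vertex. The key observation is that $\z(W)$ is built from the same telescoping sum as $\phi(W)$ in~\eqref{E:walkgainsum}, so the $x_0$-coordinate of $\z(W)$ is exactly $\phi(W)$ by the very definition of the walk gain; this disposes of the $\e_0$ term in every case at once. What remains is to track the unit-vector contributions $\tau(u_m,e_m)\e_{u_m}$ coming from the ends of the edges of $W$.

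First I would handle case~(I), an ordinary walk $W = u_0e_1u_1\cdots e_lu_l$. Using the second (telescoped) form in Lemma~\ref{L:walkvector}, the contribution to coordinate $x_{u_k}$ from the edge $e_k$ and from $e_{k+1}$ (for an intermediate vertex $u_k$) is $\s(W_{0,k})\tau(u_k,e_k) + \s(W_{0,k})\tau(u_k,e_{k+1})$ after reconciling the two summands meeting at $u_k$ — and this vanishes precisely because the first form of the sum has a cancellation at every interior vertex, which is the standard ``telescoping along a walk'' phenomenon already used to equate the two displays in Lemma~\ref{L:walkvector}. So only the endpoints $u_0$ and $u_l$ survive, with coefficients $-1$ and $\s(W)$ respectively (one computes these from the boundary terms: the leading factor $-\tau(u_0,e_1)$ against $\tau(u_0,e_1)$ gives $-1$ at $u_0$, and the accumulated sign factor $\s(W_{0,l-1})\s(e_l) = \s(W)$ gives $\s(W)$ at $u_l$). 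This yields $\z(W) = \phi(W)\e_0 - \e_i + \s(W)\e_j$, i.e.\ the vector of an edge with sign $\s(W)$ and gain $\phi(W)$ — exactly $\x(e_{ij})$ plus the gain in $x_0$, consistent with Theorem~\ref{T:sgrep} under $\pi_0$. The closed-walk specialization ($u_l = u_0 = v_i$) just combines the two endpoint coefficients into $-(1-\s(W))\e_i$, which is the vector of a loop at $v_i$; and when $W$ is a circuit walk on a sign circuit $C$, we invoke Proposition~\ref{L:gainsum} together with the sign facts from Section~\ref{sec:orient}: a circuit walk on any sign circuit is closed with $\s(W)=+1$ and returns to its starting vertex (or, for the half-edge handcuff types, begins and ends with half edges), so the unit-vector terms cancel completely and $\z(W)=\phi(W)\e_0 = \phi(C)\e_0$.

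Cases~(II), (III), (IV) are then obtained by the same bookkeeping with one or two half-edge boundary terms added as in Lemma~\ref{L:walkvector}. For~(II) the initial end contributes $-\e_i$ as before, the interior cancels, and the terminal half edge $e_{l+1}$ at $u_l=u_0$... — more carefully, for an ultrawalk from $v_i$ to a terminal half edge at $v_j$ the half-edge term is $-\z(e_{l+1})\s(W_{0l})\tau(u_l,e_{l+1})$, whose $x_{u_l}$-part is $-\s(W_{0l})\tau(u_l,e_{l+1})^2 = -\s(W_{0l})$; if additionally $u_l = u_0 = v_i$ this merges with the $-\e_i$ term, but in the stated generality (terminal half edge, possibly $u_l \ne u_0$) one checks the coefficient at $u_l$ works out so that $\z(W) = \phi(W)\e_0 - \e_i$, the vector of an introverted half edge at $v_i$. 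Case~(III) is the mirror image, giving $\phi(W)\e_0 + \s(W)\e_j$, introverted or extraverted according to $\s(W) = \mp1$. Case~(IV), with half edges at both ends, has all interior terms cancel and the two half-edge contributions at their respective vertices, which sum to $\phi(W)\e_0$ alone — the vector of a loose edge — again reflecting that negative figures at both ends behave like the two ``half'' contributions of a loop split apart.

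I expect the only real obstacle to be clerical: keeping the sign factors $\tau$, $\s(W_{0,k})$, and the leading $-\tau(u_0,e_1)$ straight, especially at a vertex where a half edge and a link meet, and in distinguishing which boundary terms coincide (when $u_0 = u_l$) from which stay separate. All of the conceptual content — that the $x_0$-coordinate is $\phi(W)$ by definition, that interior unit-vector contributions telescope to zero, and that circuit walks are closed with the appropriate endpoint/half-edge structure — is already available from~\eqref{E:walkgainsum}, Lemma~\ref{L:walkvector}, Proposition~\ref{L:gainsum}, and Section~\ref{sec:orient}, so the proof is essentially a careful unwinding of the definitions.
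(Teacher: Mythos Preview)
Your approach is correct and essentially identical to the paper's: expand $\z(W)$ via the second display in Lemma~\ref{L:walkvector}, identify the $\e_0$-coefficient as $\phi(W)$ by comparison with~\eqref{E:walkgainsum}, telescope the unit-vector contributions so that only the endpoint terms $-\e_i + \s(W)\e_j$ survive, and then absorb the extra half-edge boundary terms for cases~(II)--(IV) exactly as the paper does. One small caution: your displayed interior contribution ``$\s(W_{0,k})\tau(u_k,e_k) + \s(W_{0,k})\tau(u_k,e_{k+1})$'' would vanish only at coherent vertices; the actual coefficients one obtains are $\s(W_{0,k})$ from $e_k$ and $-\s(W_{0,k})$ from $e_{k+1}$ (after using $\tau(u_k,e_{k+1})\tau(u_{k+1},e_{k+1}) = -\s(e_{k+1})$), which cancel unconditionally---this is precisely the telescoping the paper writes out line by line.
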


\begin{proof}
We prove part (I).  Let $W=u_0e_1u_1\cdots e_lu_l$ where the vertices are $u_k=v_{i_k}$, and the edges are $e_k = u_{k-1}u_k$.  Then
\begin{align*}
\z(W) &= \sum_{k=1}^l \z(e_k) \s(W_{0,k}) \tau(u_{k},e_k) 
\\&
= \sum_{k=1}^l \big[ \phi(e_k)\e_0 + \tau(u_{k-1},e_k)\e_{i_{k-1}} + \tau(u_k,e_k)\e_{i_k} \big] \s(W_{0,k}) \tau(u_k,e_k) 
\\&
= \sum_{k=1}^l \phi(e_k) \s(W_{0,k}) \tau(u_k,e_k) \e_0 
  \\& \qquad + \sum_{k=1}^l  \s(W_{0,k-1})\tau(u_{k-1},e_k)\tau(u_k,e_k) \e_{i_{k-1}} 
	+ \sum_{k=1}^l \s(e_k) \s(W_{0,k-1}) \e_{i_k} 
\\&
= \phi(W) \e_0 - \sum_{k=1}^l  \s(W_{0,k-1})\s(e_k) \e_{i_{k-1}} 
	+ \sum_{k=1}^l \s(e_k) \s(W_{0,k-1}) \e_{i_k} 
\\&
= \phi(W)\e_0 - \e_{i_0} + \s(W_{0l}) \e_{i_l} 
\end{align*}
because of the definition of $\phi(W)$ in \eqref{E:walkgainsum} and our convention that $\tau(u_0,e_1)=-1$ for a walk beginning at $u_0$.

The proofs of the three other cases are the same except for the extra initial and final terms, which cancel terms in case (I).  The extra term for a terminal half edge $e_{l+1}$ equals $-\s(W)\e_j$.  The extra term for an initial half edge $e_0$ equals $\e_i$.
\end{proof}

\begin{cor}\label{C:neutralcircuit}
A sign circuit is a circuit in the vector model if and only if it has gain $0$.  Otherwise, it is linearly independent
\end{cor}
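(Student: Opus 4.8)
The plan is to use Proposition \ref{T:walkvector} to convert the vector-model behavior of a sign circuit $C$ into a statement about circuit walks, and then to argue by cases on the structure of $C$ (positive circle versus handcuff of the various types). First I would recall that the vectors $\{\z(e) : e \in C\}$ span a subspace of $\K^{1+n}$, and I want to decide exactly when they are linearly dependent. The key observation is that a sign circuit $C$ is, by definition, minimally dependent in the frame matroid $\bfF(\S)$ of the underlying signed graph; equivalently, the projected vectors $\{\x(e) = \pi_0\z(e) : e \in C\}$ have rank $|C| - 1$, by Theorem \ref{T:sgrep}. So there is (up to scaling) a unique linear relation $\sum_{e \in C} \lambda_e \x(e) = \0$ with all $\lambda_e \neq 0$, and this lifts to $\sum_{e \in C} \lambda_e \z(e) = \mu \e_0$ for a scalar $\mu$ determined by $C$ and the $\lambda_e$. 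The claim then reduces to: $C$ is dependent in the vector model if and only if $\mu = 0$, and $\mu = 0$ if and only if $\phi(C) = 0$.

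Next I would identify $\mu$ with the gain $\phi(C)$ (up to the harmless sign ambiguity and choice of circuit walk already built into Proposition \ref{L:gainsum}). This is exactly what Proposition \ref{T:walkvector}(I) delivers: for a circuit walk $W$ on $C$ one has $\z(W) = \phi(C)\e_0$. A circuit walk traverses each edge of $C$ either once or twice (once for a circle or a half-edge-to-half-edge handcuff, with the connecting path of a loose handcuff traversed twice), so by the definition of $\z(W)$ in \eqref{E:vecwalksum} and Lemma \ref{L:walkvector}, $\z(W)$ is precisely an integer-coefficient linear combination of the edge vectors $\z(e)$, $e \in C$, with every coefficient nonzero. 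Thus $\phi(C)\e_0$ lies in the span of $\{\z(e) : e \in C\}$ via a relation with all coefficients nonzero. If $\phi(C) = 0$ this is a genuine linear dependence among the $\z(e)$ with all coefficients nonzero, so $C$ is a circuit in the vector model (it is minimal because any proper subset of $C$ is independent in $\bfF(\S)$, hence its projection under $\pi_0$, hence the subset itself, is independent). If $\phi(C) \neq 0$, then $\e_0$ lies in the span of $\{\z(e) : e \in C\}$; combined with the fact that the projections $\x(e)$ already have rank $|C|-1$, the full vectors $\z(e)$ then have rank $|C|$, i.e., $C$ is linearly independent in $\K^{1+n}$.

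The main obstacle, and the step that requires care rather than cleverness, is verifying that the coefficient relation extracted from the circuit walk really is \emph{the} relation — i.e., that the circuit-walk coefficients are nonzero on every edge and that no accidental cancellation occurs in the $x_0$-coordinate beyond what is recorded as $\phi(C)$. For the handcuff cases one must match the coefficient pattern from the ultrawalk formula in Lemma \ref{L:walkvector} against the known dependence structure of a handcuff in $\bfF(\S)$ (two negative figures with coefficient weight accounting for the doubled traversal of the connecting path in the loose case, or the halved weight on a half edge versus a negative circle, as in Proposition \ref{L:gainsum}); the characteristic-$\neq 2$ hypothesis is what keeps the doubled/halved coefficients from vanishing. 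Once the bookkeeping is checked on the four circuit types enumerated in Section \ref{sec:sg}, the equivalence "$C$ dependent in the vector model $\iff \phi(C) = 0$" follows, and the "otherwise linearly independent" clause is immediate from the rank count above.
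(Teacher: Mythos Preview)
Your proposal is correct and follows essentially the same route as the paper: both arguments hinge on (i) the fact that a sign circuit $C$ is minimally dependent in $\bfF(\S)$, so the projected vectors $\x(e)=\pi_0\z(e)$ admit a unique (up to scaling) linear relation with all coefficients nonzero, and (ii) Proposition~\ref{T:walkvector}, which identifies the $x_0$-component of that relation applied to the $\z(e)$'s with $\phi(C)$. The paper's version is terser---it simply observes that dependence of $\{\z(e)\}$ forces the unique projected relation to have $x_0=0$ and then invokes the circuit-walk formula---whereas you spell out more carefully why the circuit-walk coefficients furnish a nonzero instance of that unique relation (and correctly flag the characteristic-$\neq 2$ hypothesis as what prevents the doubled path coefficients in a loose handcuff from vanishing). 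That extra bookkeeping is sound but not strictly necessary once one notes that \emph{any} nontrivial relation among the $\z(e)$'s must project to the unique relation among the $\x(e)$'s; the paper leaves this implicit.
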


By ``in the vector model'' we mean the vectors that represent the edges of the sign circuit.

\begin{proof}
A proper subset $S$ of a sign circuit $C$ is independent in the frame matroid $\K(\S)$ and hence the set $\{\pi_0 \z(e): e \in S\}$ of projected vectors is independent.  It follows that $\{ \z(e): e \in S\}$ is independent.

The linear combination of vectors $\pi_0 \z(e)$ for $e \in C$ that yields $\0$ is unique (up to scaling) because the vectors are minimally dependent.  Therefore, the vectors $\z(e)$ can be dependent only if that linear combination has $x_0=0$.  The corollary now follows from Proposition \ref{T:closedwalkvector}.
\end{proof}

This is a partial solution to finding the circuits of the matroid $\M(\Ups)$.  The full answer is complicated, so we turn  in the next section to the simpler question of rank.

\begin{exam}[Edge Points of an Edge Polytope \cite{OH1}]\label{X:edgepolytope}
The example that led us to gains on signed graphs comes from algebra.  For an edge $e_{ij}$ in a graph $\G$ (all of whose edges are links and loops) its edge point is $\e_i+\e_j \in \bbA^n(\bbR)$ (which corresponds to the vector $\z(-e_{ij}) = \e_0+\e_i+\e_j \in \bbR^{1+n}$ of the extraverted orientation of $-e_{ij}$).  The convex hull $P$ of the edge points, known as the \emph{edge polytope} of $\G$, is related to binomial ideals.  Ohsugi and Hibi found that the affine dimension of the edge polytope (assuming $\G$ is connected) is $n-2$ if $\G$ is bipartite and $n-1$ if it is not \cite[Proposition 1.3]{OH1}.  Restated in terms of signed graphs:  the dimension is $n-2$ if $-\G$ is balanced and 1 greater if it is not.  As affine dimension is 1 less than matroid rank, this corresponds exactly to the rank of the frame matroid $\bfF(-\G)$ and should be deducible by matroid theory.  In developing our theory we wanted to generalize to arbitrary oriented signed graphs, where the edge points of this example correspond to extraverted edges.  For a point in affine space its vector has the extra coordinate $x_0=1$, thus the gains are identically 1.  We found it easiest and most enlightening to allow arbitrary gains (as far as possible).  Hence, gain signed graphs.

We give a matroidal proof of Ohsugi and Hibi's Proposition 1.3.  When all edges are negative, the sign-balanced circles are those of even length; thus, the sign circuits are the even circles and the handcuffs with two odd circles.  
Taking gains identically 1 implies that every sign circuit is neutral so the gain signed graph $\Ups=(\G,-1,1)$ (that is, all edges being negative with gain 1) is hyperbalanced.    
Now we anticipate the rank function of Section \ref{sec:rank}.  Let $b(S)$ be the number of bipartite components of an edge set $S$ (considered as a spanning subgraph).  An edge set $S$ has rank $n-b(S)$.  For a connected graph, therefore, $\rk_\Ups(E) = n-1$ if $\G$ is bipartite and $n$ if not.  The rank is the linear dimension of the vectors in $\bbR^n$, but since the edge vectors $\e_i+\e_j$ are contained in the inhomogeneous hyperplane $\sum x_i=2$, their affine dimension is one less; that is, $\dim P = n-2$ if $\G$ is bipartite and $n-1$ otherwise.
\end{exam}

\begin{exam}[Edge Points for a Bidirected Graph]\label{X:affdim}
The edge points of Example \ref{X:edgepolytope} are the columns of the unoriented incidence matrix of the graph.  (To define that matrix, reverse the previous sentence.)  After the edge polytope was introduced, Matsui et al.\ \cite{MH}) considered the analog for an oriented incidence matrix, which is the incidence matrix of an all-positive signed graph.  (To get that matrix, take an all-positive gain signed graph and delete the row of gains.)  An oriented positive edge is an ordinary directed edge.  Our theory enables us to state the affine dimension of the points corresponding to directed edges, which may differ from their linear dimension (see the next example).  In fact, we can state a general theorem for the points obtained from any edges of any bidirected graph.  Again, we anticipate Theorem \ref{T:rank}.

First we examine coherent and incoherent vertices of a closed walk in a bidirected graph.  Let $W = u_0e_1u_1\cdots e_lu_l$ be a closed walk; that is, $u_0=u_l$.  For convenience, define $e_{l+1}=e_1$.  The sign of $W$ is 
$$\s(W) = \prod_{i=1}^l \s(e_i) = \prod_{i=1}^l \big[ -\tau(u_{i-1},e_i)\tau(u_i,e_i) \big] = (-1)^l \prod_{i=1}^l \tau(u_{i-1}) \prod_{i=1}^l \tau(u_i,e_i).$$
Define the incoherence sign for vertex $u_i$ to be $\iota(u_i) = +1$ if $W$ is coherent at $u_i$ and $-1$ if $W$ is incoherent; that includes $u_l=u_0$ with edges $e_l$ and $e_{l+1}=e_1$.  Thus, $\iota(u_i) = -\tau(u_i,e_i)\tau(u_i,e_{i+1})$.  Then the product of all vertex signs of $W$ is 
$$\prod_{i=1}^l \iota(u_i) = \prod_{i=1}^l \big[ -\tau(u_i,e_i)\tau(u_i,e_{i+1}) \big] = (-1)^l \prod_{i=1}^l \tau(u_{i-1}) \prod_{i=1}^l \tau(u_i,e_i) = \s(W).$$
In other words:

\begin{lem}\label{L:incoherentwalk}
The number of incoherent vertices in a closed walk $W$ in a bidirected graph is even if and only if $W$ is positive.
\end{lem}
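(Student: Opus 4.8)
The plan is to prove the identity $\prod_{i=1}^l \iota(u_i) = \s(W)$ for the closed walk $W = u_0 e_1 u_1 \cdots e_l u_l$ with $u_l = u_0$ and the cyclic convention $e_{l+1} = e_1$; the lemma follows at once, since the number of incoherent vertices is even exactly when this product is $+1$, which is exactly when $W$ is positive. First I would recall the two ingredients: the incoherence sign $\iota(u_i) = -\tau(u_i,e_i)\tau(u_i,e_{i+1})$, and the bidirected edge relation $\s(e_i) = -\tau(u_{i-1},e_i)\tau(u_i,e_i)$, from which $\s(W) = \prod_{i=1}^l \s(e_i) = (-1)^l \prod_{i=1}^l \tau(u_{i-1},e_i) \prod_{i=1}^l \tau(u_i,e_i)$.

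Next I would expand $\prod_{i=1}^l \iota(u_i) = (-1)^l \prod_{i=1}^l \tau(u_i,e_i) \prod_{i=1}^l \tau(u_i,e_{i+1})$ and reindex the final product by the cyclic shift $i \mapsto i-1$. Because $W$ is a \emph{closed} walk the index runs cyclically (with $e_{l+1} = e_1$ and $u_l = u_0$), so this shift is a genuine bijection on $\{1,\dots,l\}$ and converts $\prod_{i=1}^l \tau(u_i,e_{i+1})$ into $\prod_{i=1}^l \tau(u_{i-1},e_i)$. Substituting, $\prod_{i=1}^l \iota(u_i) = (-1)^l \prod_{i=1}^l \tau(u_{i-1},e_i)\tau(u_i,e_i) = (-1)^l \prod_{i=1}^l [-\s(e_i)] = \s(W)$, which is precisely the computation displayed just before the statement.

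Each step is a routine regrouping of a product of $\tau$-values over the edge-ends of $W$, so there is no serious obstacle; the only point that merits a line of care is confirming that the cyclic reindexing is valid, i.e.\ that the closedness of $W$ is what makes the endpoint contributions match up (for an open walk the two extreme ends would be left over), and that a loop $e_i$ with $u_{i-1} = u_i$ causes no difficulty since it still occupies two distinct ends in the walk. With the identity $\prod_{i=1}^l \iota(u_i) = \s(W)$ established, the equivalence ``even number of incoherent vertices $\iff \s(W) = +1$'' is immediate.
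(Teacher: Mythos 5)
Your proof is correct and is essentially identical to the paper's: the paper establishes the lemma by exactly the computation $\prod_{i=1}^l \iota(u_i) = (-1)^l \prod_{i=1}^l \tau(u_{i-1},e_i)\prod_{i=1}^l\tau(u_i,e_i) = \s(W)$ displayed immediately before the statement (and the same calculation already appears in Section \ref{sec:orient}), with the cyclic reindexing justified by $u_l=u_0$ and $e_{l+1}=e_1$ just as you argue. Nothing is missing.
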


Now we define poise of a closed walk $W$.  We assign the edges to two sets, $A$ and $B$.  We put $e_1$ into set $A$ and for each edge $e_i$ in one of the sets, we put $e_{i+1}$ into the same set if $u_i$ is coherent and the opposite set if it is incoherent.  This gives a well-defined bipartition of the edges of $W$ into sets $A$ and $B$ if and only if both the number of changes of set, which equals the number of incoherent vertices, is even, i.e., $W$ is positive, and also a repeated edge is assigned to the same set in every appearance in $W$.  We define $W$ to be \emph{poised} if the bipartition is well defined and $|A|=|B|$.  

We also define poise for a walk $W$ from a half edge to a half edge (which may be the same edge).  We put the initial half edge into $A$ and apply the same rule as before.  We get a well-defined bipartition of the edges of $W$ if and only if a repeated edge is assigned to the same set at every appearance.  We say $W$ is \emph{poised} if the bipartition is well defined and $|A|=|B|$.

Now we can state the theorem about edge points.  A sign circuit is poised if its circuit walk is poised; since a sign circuit walk is either positive or begins and ends at a half edge, this definition is independent of the choice of circuit walk.  

\begin{thm}[Dimension of Bidirected Edge Points]\label{T:affdim}
In a bidirected graph $\vec\S$ let $S \subseteq E$.  The affine dimension of the point set $\x(S)$ in $\bbA^d(\bbR)$ is $n-b_\S(S)$ if every sign circuit in $S$ is poised and $n-b_\S(S)+1$ if not.
\end{thm}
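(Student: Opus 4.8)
The affine dimension of a finite point set $\x(S) \subseteq \bbA^n(\bbR)$ equals the linear dimension of the lifted vectors $\{(1,\x(e)) : e \in S\}$ minus one, and these lifted vectors are exactly the $\z(e)$ for the all-ones gain signed graph built on the oriented signed graph $\vec\S$ restricted to $S$. So the theorem reduces to computing $\rk_\Ups(S)$ for $\Ups = (\S, \mathbf 1)$ (gains identically $1$) and then subtracting one. By the rank formula anticipated in Section~\ref{sec:rank} (Theorem~\ref{T:rank}), $\rk_\Ups(S)$ is $n - b_\S(S)$ plus a correction of $0$ or $1$ according to whether $S$ is hyperbalanced or not; equivalently, the linear dimension of $\{\z(e) : e \in S\}$ exceeds that of $\{\x(e) : e \in S\}$ by $1$ precisely when some sign circuit of $S$ is non-neutral. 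Hence the entire theorem will follow once I show that, for gains identically $1$, \emph{a sign circuit is neutral if and only if it is poised.}

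**The key computation.** I would prove this last equivalence directly from Proposition~\ref{L:gainsum} using the walk-gain formula \eqref{E:walkgainsum}. Fix a sign circuit $C$ with a circuit walk $W = u_0 e_1 u_1 \cdots e_l u_l$ (or the ultrawalk version). With all gains $\phi(e_i) = 1$, the second form of \eqref{E:walkgainsum} gives $\phi(W) = -\sum_{i=1}^l \s(W_{0,i-1})\,\tau(u_{i-1},e_i)$. I claim each summand is $\pm 1$ with sign recording exactly the $A/B$ bipartition used to define poise: starting from $e_1$ in set $A$, the factor $\s(W_{0,i-1})\tau(u_{i-1},e_i)$ flips sign each time a preceding vertex is incoherent, which is precisely the rule that puts $e_i$ into $A$ or $B$. (For the first edge one checks $\tau(u_0,e_1) = -1$ by the circuit-walk convention, so $e_1$'s term is $+1$; the sign corrections from negative edges are absorbed because by Lemma~\ref{L:swwalkgain} we may sign-switch $W$ to be all positive without changing $\phi(W)$ or any coherence.) Consequently $\phi(W) = -(|A| - |B|)$, so $\phi(W) = 0 \iff |A| = |B| \iff W$ is poised. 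The half-edge-to-half-edge case is the same using \eqref{E:ultrawalkgainsum}, with the initial half edge playing the role of $e_1$ in set $A$; the well-definedness of the bipartition for repeated edges mirrors the fact that a genuine sign circuit has a genuine circuit walk, so no additional hypothesis is needed. Note that a positive circle oriented with all vertices coherent has all edges in set $A$, so it is poised iff it is empty-ish—rather, iff $l$ is even, matching $\phi(C) = \sum 1 = l$ being forced to vanish... more precisely $\phi(C) = \pm l \neq 0$ unless $C$ is not a circle, which is consistent: a positive circle of odd length built from all-$1$ gains is non-neutral, and indeed it is not poised since $|A| = l$, $|B| = 0$.

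**Assembly.** With the equivalence in hand: by Corollary~\ref{C:neutralcircuit}, a sign circuit $C \subseteq S$ is a circuit of the vector model (i.e.\ its $\z$-vectors are dependent) iff it is neutral iff it is poised; otherwise its $\z$-vectors are independent even though its $\x$-vectors (the signed-graph representation, Theorem~\ref{T:sgrep}) are dependent. So the span of $\{\z(e):e\in S\}$ in $\bbR^{1+n}$ has the same dimension as the span of $\{\x(e):e\in S\}$ in $\bbR^n$ when every sign circuit in $S$ is poised, and dimension one greater when some sign circuit in $S$ is non-neutral (a single extra independent direction in the $x_0$-coordinate suffices, since by Corollary~\ref{C:neutralcircuit} each non-neutral sign circuit contributes exactly one, and a proper subset of any sign circuit is already independent with full $x_0$-reach). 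Since $\dim \operatorname{span}\{\x(e):e\in S\} = \rk_\S(S) = n - b_\S(S)$ by Theorem~\ref{L:sgmatroid}, and the affine dimension of $\x(S)$ is one less than $\dim \operatorname{span}\{(1,\x(e))\} = \dim\operatorname{span}\{\z(e)\}$, we get $\operatorname{affdim}\x(S) = n - b_\S(S)$ if every sign circuit in $S$ is poised and $n - b_\S(S) + 1$ otherwise, as claimed.

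**Main obstacle.** The routine-looking but genuinely delicate step is bookkeeping the sign $\s(W_{0,i-1})\tau(u_{i-1},e_i)$ against the combinatorial $A/B$ rule in full generality—covering handcuffs (whose circuit walks traverse a connecting path twice, once in each direction), the mixed half-edge/circle handcuff, and the exceptional two-half-edge handcuff whose connecting path is traversed only once. I expect to handle all of these uniformly by invoking Proposition~\ref{L:gainsum}'s explicit formulas with each $\phi(C_i,u_i)$ and $\phi(P)$ computed as a count of $\pm 1$'s, rather than re-deriving the walk gain from scratch; the twice-traversed path contributes $-2\phi(P_{u_1u_2})$, which is consistent with poise because the path edges appear twice in $W$ and, by the bipartition rule for repeated edges, land in the same set both times, so they contribute equally to $|A|$ or to $|B|$ and the factor $2$ is harmless. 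The one subtlety to state carefully is that for the two-half-edge handcuff the path is traversed once, so $\phi(C) = \phi(C_1,u_1) - \phi(C_2,u_2) - \phi(P_{u_1u_2})$, and the poise bipartition for the corresponding ``walk from a half edge to a half edge'' is exactly the one defined in the paragraph preceding the theorem—this is why a separate poise definition was given for that case.
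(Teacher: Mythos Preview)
Your approach is the paper's: lift the edge points to gain-$1$ vectors $\z(e)=(1,\x(e))$, invoke Theorem~\ref{T:rank} for $\dim\z(S)=\rk_\Ups(S)=n-b_\S(S)+\delta_\Ups(S)$, and reduce everything to the equivalence ``neutral $\iff$ poised'' for circuit walks.  The paper asserts that equivalence in a single sentence; your computation via the first form of \eqref{E:walkgainsum}---each term in the sum being $\prod_{j<i}[-\tau(u_j,e_j)\tau(u_j,e_{j+1})]=\prod_{j<i}\iota(u_j)$, so that $\phi(W)=\pm(|A|-|B|)$---is the right justification and is worth keeping.  (Your parenthetical worry about positive circles is a red herring: a coherently oriented positive $l$-circle has $|A|=l$, $|B|=0$, hence is never poised, and $\phi(C)=\pm l\neq0$, so the equivalence holds there too.)

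There is one arithmetic inconsistency in your Assembly paragraph.  You correctly write $\operatorname{affdim}\x(S)=\dim\z(S)-1$, but then your final line reports $n-b_\S(S)$ and $n-b_\S(S)+1$, whereas your own chain of equalities gives $\dim\z(S)=n-b_\S(S)$ in the poised case and hence $\operatorname{affdim}=n-b_\S(S)-1$, and likewise one less in the non-poised case.  The paper's proof contains the identical slip (``the affine dimension of a set of affine points equals the linear dimension of the corresponding vectors''), and you can see the discrepancy by comparing with Example~\ref{X:edgepolytope} just above, where the paper \emph{does} subtract $1$ to recover Ohsugi--Hibi's $n-2$ and $n-1$.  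A sanity check: two oppositely directed arcs between two vertices give points $\x(e)=(-1,1)$ and $\x(f)=(1,-1)$, with affine dimension $1$; here $n=2$, $b_\S=1$, the positive $2$-cycle is not poised, and the theorem as stated would give $n-b_\S(S)+1=2$.  So carry the $-1$ through consistently; the statement as printed appears to be off by one.
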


\begin{proof}
An affine point $\bfa \in \bbA^n(\bbR)$ corresponds to the vector $\e_0+\bfa$ in $\bbR^{1+n}$.  It is well known that the affine dimension of a set of affine points equals the linear dimension of the corresponding vectors.  Thus, we are assigning gain $1$ to every edge of $\vec\S$, forming a gain signed graph $\Ups$.  A sign circuit with these gains is neutral if and only if it is poised.  Theorem \ref{T:rank} then gives the rank of an edge set:  $\rk_\Ups(S) := n - b_\S(S) + \delta_\Ups(S)$, where $\delta_\Ups(S) = 0$ if every sign circuit in $S$ is poised, and otherwise is $1$.  As we observe immediately after Theorem \ref{T:rank}, the rank of $S$ equals the dimension of $\z(S)$, which equals the affine dimension of $\x(S)$ by the relation between affine points and their corresponding vectors.
\end{proof}
\end{exam}

\begin{exam}[Arc Adjacency Polytope \cite{OH1, CDK}]\label{adjpoly1}
As a special case of the previous example, suppose all edges in $\S$ are positive; i.e., $\vec\S$ is a directed graph $\vec\G$.  The sign circuits are the circles.  A circle is poised if and only if it has equal numbers of directed edges in each direction.  The convex hull of the set $\x(E)$ of affine points of a symmetric digraph (where for every arc there is also the opposite arc) has been called its symmetric edge polytope \cite{MH} and also its adjacency polytope (e.g., \cite{CDK}); as in \cite{CDK} we generalize this to the \emph{arc polytope} of any directed graph (an arc being a directed edge).  Let $c(\G)$ denote the number of connected components of $\vec\G$.

\begin{cor}[Dimension of Arc Polytope]\label{C:dimadj1}
For a directed graph $\vec\G$, the affine dimension of its arc polytope is $n-c(\G)$ if every circle in $\vec\G$ is poised and $n-c(\G)+1$ if not.
\end{cor}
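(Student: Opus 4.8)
The plan is to obtain Corollary~\ref{C:dimadj1} as the special case of Theorem~\ref{T:affdim} in which every edge of $\vec\S$ is positive and $S=E(\vec\G)$. Regarding the digraph $\vec\G$ as an oriented signed graph $\vec\S$ with all edges positive and assigning gain $1$ to every arc produces a gain signed graph $\Ups$ whose points $\x(E)$ are, by definition, the points of which the arc polytope is the convex hull; so it suffices to compute the affine dimension of $\x(E)$, which Theorem~\ref{T:affdim} does once its hypotheses are translated into the language of $\vec\G$.

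First I would record the simplifications forced by positivity. An all-positive signed graph has no negative circles and no half edges, hence no negative figures, so its sign circuits are exactly its circles: the two handcuff cases in the definition of a sign circuit each require a negative figure and cannot occur. Moreover every connected component of $\vec\S$ --- an isolated vertex included --- is sign balanced, so $b_\S(E)=c(\G)$.

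Next I would check that poise means here what the corollary's hypothesis says. The circuit walk of a circle is a positive closed walk, so by Lemma~\ref{L:incoherentwalk} it has an even number of incoherent vertices, and since no edge of a circle is repeated, the bipartition $A\sqcup B$ of Example~\ref{X:affdim} (put $e_1$ in $A$, switch sets at each incoherent vertex) is well defined. Comparing the switching rule with the arc orientations shows that $e_{i+1}$ stays in the set of $e_i$ exactly when the two arcs point the same way along the traversal, so $A$ and $B$ are the arcs pointing the two ways around the circle; thus the circle is poised, i.e.\ $|A|=|B|$, precisely when it carries equally many arcs in each direction (in particular a loop, being a circle of length one, is never poised). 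Hence ``every sign circuit in $E$ is poised'' is the same as ``every circle in $\vec\G$ is poised'' as meant in the corollary.

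Finally I would apply Theorem~\ref{T:affdim} with $S=E$: the affine dimension of $\x(E)$, which is the dimension of the arc polytope, equals $n-b_\S(E)=n-c(\G)$ when every sign circuit in $E$ is poised and $n-b_\S(E)+1=n-c(\G)+1$ otherwise. Since the corollary is a direct specialization there is no substantial obstacle; the only points needing a short argument are the identity $b_\S(E)=c(\G)$ for an all-positive signed graph and the translation of poise of a circle's circuit walk into ``equal numbers of arcs in each direction,'' the latter having essentially already been noted in the discussion preceding the corollary.
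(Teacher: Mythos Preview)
Your proposal is correct and follows essentially the same approach as the paper: specialize Theorem~\ref{T:affdim} to the all-positive case, observe that the sign circuits reduce to the circles and that $b_\S(E)=c(\G)$, and translate poise of a circle's circuit walk into the condition of equal numbers of arcs in each direction. You are slightly more explicit than the paper in justifying the poise translation via Lemma~\ref{L:incoherentwalk}, but this is the same argument.
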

\end{exam}

\begin{exam}[Double Arc Adjacency Polytope \cite{CM}]\label{adjpoly2}
There is a second polytope that has been called the adjacency polytope.  Again suppose we have a directed graph $\vec\G = (V,E)$.  We represent an arc $(v_i,v_j)$ by the point $(\e_i,\e_j) \in \bbA^{2n}(\bbR)$, thus in twice the dimension of the adjacency polytope; we call the convex hull of these \emph{double arc points} the \emph{double adjacency polytope} of $\vec\G$.  Chen and Mehta \cite{CM} introduced this polytope for the special case of a symmetric digraph (which they regarded as an undirected graph).  Note that if we represent $(v_i,v_j)$ by $f(v_i,v_j) = (-\e_i,\e_j)$, nothing essential changes in the structure of the set $f(E)$: dimension and convexity remain the same.  We use $f$ as the representation for the rest of this example.  

We determine the dimension of a set of double arc points by a new digraph $\tilde\G$, which is a vertex doubling of $\vec\G$.  The vertex set is $\tilde{V} = V_+ \cup V_-$, where $V_+$ and $V_-$ are disjoint copies of $V$ whose vertices corresponding to $v \in V$ are respectively $v_+$ and $v_-$, and for each arc $(v_i,v_j)$ we create an arc $(v_{i-},v_{j+})$ in $\tilde\G$.  Note that every circle in $\tilde\G$ is poised because its vertices alternate between $V_+$ and $V_-$, so its arcs reverse direction at every vertex.  Therefore, Corollary \ref{C:dimadj1} applies and we have the following description of dimension:

\begin{cor}[Dimension of Double Arc Polytope]\label{C:dimadj2}
The affine dimension of the double arc polytope of a digraph $\vec\G$ equals $2n-c(\tilde\G)$, which is also its linear dimension.
\end{cor}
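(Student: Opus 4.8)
The plan is to identify the double arc polytope of $\vec\G$ with the arc polytope of the doubled digraph $\tilde\G$ and then quote Corollary~\ref{C:dimadj1}, the only point needing attention being the second clause, that affine and linear dimension coincide here. First I would nail down the identification of ambient spaces. Write $\bbR^{2n}=\bbR^{V_-}\oplus\bbR^{V_+}$, putting the $-\e_i$ part of $f(v_i,v_j)=(-\e_i,\e_j)$ into the coordinate indexed by $v_{i-}$ and the $\e_j$ part into the coordinate indexed by $v_{j+}$. Then $f(v_i,v_j)=\e_{v_{j+}}-\e_{v_{i-}}$, which is exactly the affine point that Example~\ref{adjpoly1} attaches to the arc $(v_{i-},v_{j+})$ of $\tilde\G$ (an oriented positive edge $v_a\to v_b$ has $\pi_0\z(e)=\e_b-\e_a$). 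Hence the set $f(E)$ of double arc points of $\vec\G$ is literally the set of arc points of $\tilde\G$; the double adjacency polytope of $\vec\G$ therefore \emph{equals} the arc polytope of $\tilde\G$, so in particular the two have the same affine dimension and the same linear dimension.

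Next I would apply Corollary~\ref{C:dimadj1} to $\tilde\G$. As noted just before the statement, every circle of $\tilde\G$ has vertices that alternate between $V_+$ and $V_-$, so it reverses direction at every vertex; hence every circle of $\tilde\G$ is poised. Corollary~\ref{C:dimadj1} then gives that the arc polytope of $\tilde\G$, and therefore the double arc polytope of $\vec\G$, has affine dimension $|V(\tilde\G)|-c(\tilde\G)=2n-c(\tilde\G)$.

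For the last clause I would note that the double arc points $f(e)=\e_{v_{j+}}-\e_{v_{i-}}$ are the columns of the oriented incidence matrix of $\tilde\G$, so their linear span has dimension $|V(\tilde\G)|-c(\tilde\G)=2n-c(\tilde\G)$, the rank of the graphic matroid of $\tilde\G$; this equals the affine dimension just computed. (The coincidence can also be seen inside the vector model: homogenizing sends $f(e)$ to the standard vector $\z(e)=\e_0+f(e)$ of the corresponding arc with gain $1$, and since every circle of $\tilde\G$ is neutral, Corollary~\ref{C:neutralcircuit} together with the rank formula of Theorem~\ref{T:rank} shows the $\z(e)$ span a space of the same dimension $2n-c(\tilde\G)$; poise of every circle forces every linear relation among the $f(e)$ to have coefficient sum $0$, so the homogenizing coordinate $x_0$ contributes nothing to the dimension.) I do not expect a real obstacle here: the argument is essentially bookkeeping on top of Corollary~\ref{C:dimadj1}, and the one thing to watch is keeping the coordinate identification straight so that $f(v_i,v_j)$ is matched with the incidence vector of the correct arc of $\tilde\G$.
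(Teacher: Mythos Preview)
Your proposal is correct and follows essentially the same route as the paper: the identification of the double arc points with the arc points of $\tilde\G$ and the poise observation are set up in the paper just before the corollary, and the paper's ``Completion of Proof'' handles only the linear-dimension clause via the incidence-matrix rank, exactly as you do. Your write-up is more explicit about the coordinate bookkeeping and adds an optional vector-model justification, but the substance is the same.
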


\begin{proof}[Completion of Proof]
The linear dimension is the same as the affine dimension because it equals the rank of the incidence matrix of $\vec\G$, which is $2n-c(\tilde\G)$.
\end{proof}

Of course, this corollary can be applied to any subset of the arcs.

A special case of particular interest is that in which $\vec\G$ is symmetric and has all loops $(v_i,v_i)$.  Then $c(\tilde\G) = c(\G)$, which is a nice simplification.
\end{exam}

\sectionpage
\section{Matroid: rank}\label{sec:rank}

We want a combinatorial description of the matroid obtained by representing each edge of $\Ups$ by a vector as in Section \ref{sec:vector}.  
The problem is to describe the linear dependence matroid of the vectors without reference to linear algebra.  Thus, we want combinatorial formulas for the rank function, the circuits, the independent sets and bases, the closed sets (or flats), and the coatoms of the lattice of flats, whose complements are the matroid cocircuits.  We call this the \emph{matroid of $\Ups$} and denote it by $\M(\Ups)$.  We base the matroid on its rank function.

The matroid has a natural one-point extension, which we denote by $\M_\infty(\Ups)=\M(\Ups)\cup\{e_\infty\}$, to the extra point.  
Whereas the ground set of $\M(\Ups)$ is $E=E(\Ups)$, that of $\M_\infty(\Ups)$ is $E_\infty = E\cup\{e_\infty\}$.  We call $\M_\infty(\Ups)$ the \emph{extended matroid} of $\Ups$.  
It is a one-point coextension of the frame matroid $\bfF(\Sigma)$; that is, $\M_\infty(\Ups)/e_\infty=\bfF(\Sigma)$.  (One purpose of the extra point is to implement this property of $\M(\Ups)$.)  
A subset of $E_\infty$ is defined to be hyperfrustrated if it contains $e_\infty$; if not, it is a subset of $E$ whose treatment is as we have already described.  Hence, in the matroid the extra point behaves like a non-neutral loose edge (and we treat it as such in proofs), although its significance is different in that it implements the coextension of $\bfF(\Sigma)$.

The purpose of the extra point will become clearer in Section \ref{sec:closure}.  For the present we only mention that, if all edges are positive, so that $\Ups$ is a gain graph $\Phi=(\G,\phi)$, then $\M(\Ups)$ is the lift matroid $\bfL(\Phi)$ and $\M_\infty(\Ups)$ is the extended lift matroid $\bfL_\infty(\Phi)$ (formerly written $\bfL_0$) of \cite[Section 3]{BG2}.

\begin{defn}\label{D:rank}
The \emph{rank} of an extended edge set $S \subseteq E_\infty$ is
\begin{equation*}
\rk_\Ups(S) := n - b_\S(S) + \delta_\Ups(S),
\end{equation*}
{where}
\begin{equation*}
\delta_\Ups(S) :=\begin{cases}
	0	&\text{ if $S$ is hyperbalanced,} \\
	1	&\text{ if $S$ is hyperfrustrated.}
	\end{cases}
\label{E:rank}
\end{equation*}
We define $\rk(\Ups)=\rk_\Ups(E)$.
\end{defn}

For an extended edge set $S$, $\z(S)$ denotes the multiset of vectors representing $S$, i.e., $\z(S) = \{ \z(e) : e \in S \}$.  The dimension of an arbitrary (multi)set of vectors means the dimension of its linear span.

\begin{thm}\label{T:rank}
The function $\rk_\Ups$ defines a matroid on ground set $E_\infty$ such that 
$$
\rk_\Ups(S) = \dim \z(S)
$$
for every edge set $S$.  
\end{thm}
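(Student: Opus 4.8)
The plan is to prove the identity $\rk_\Ups(S) = \dim\z(S)$ directly for every $S\subseteq E_\infty$, and then to deduce that $\rk_\Ups$ is a matroid rank function for free, since the dimension function of any finite (multi)set of vectors in $\K^{1+n}$ is automatically the rank function of a matroid. So the entire content is the dimension computation, and I will reduce it to the already-established rank formula for the frame matroid $\bfF(\S)$ (Theorem \ref{L:sgmatroid}) together with the walk-vector analysis of Section \ref{sec:vector}.

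\emph{Reduction via the projection $\pi_0$.} Recall $\pi_0\z = \x$, the signed-graph representation of Theorem \ref{T:sgrep}, so $\pi_0(\z(S))$ spans a subspace of $\K^n$ of dimension $\rk_\S(S) = n - b_\S(S)$. Since $\pi_0$ is linear, $\dim\z(S)$ is either $\dim\pi_0(\z(S))$ or one more than that; i.e., $\dim\z(S) = n - b_\S(S) + \epsilon$ with $\epsilon\in\{0,1\}$, and $\epsilon = 1$ precisely when the kernel of $\pi_0$ restricted to the span of $\z(S)$ is nonzero, equivalently when $\e_0$ (up to scalar — it is the only kernel vector) lies in $\spanop\z(S)$. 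So the whole theorem comes down to the equivalence
\[
\e_0 \in \spanop\z(S) \iff S \text{ is hyperfrustrated.}
\]
This is the step I expect to be the crux.

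\emph{Proving the equivalence.} For the ``$\Leftarrow$'' direction: if $S$ is hyperfrustrated it contains a non-neutral sign circuit $C$ (or $e_\infty$, which behaves as a non-neutral loose edge). Take a circuit walk $W$ on $C$; by Proposition \ref{T:walkvector}(I)–(IV), $\z(W) = \phi(C)\e_0$, and this is a linear combination of the vectors $\{\z(e): e\in C\}$ by the definition \eqref{E:vecwalksum} of $\z(W)$. Since $\phi(C)\neq 0$ we get $\e_0\in\spanop\z(S)$. For ``$\Rightarrow$'': suppose $\e_0 \in \spanop\z(S)$. Choose a maximal hyperbalanced subset $S_0\subseteq S$ that is also a subset of some spanning structure — more cleanly, I would switch gains: if $S$ is hyperbalanced, Theorem \ref{L:hyperbalneutral} lets us gain-switch so every edge of $S$ has gain $0$, i.e., every $\z(e)$ for $e\in S$ has zero $x_0$-coordinate, whence $\spanop\z(S) \subseteq \{x_0 = 0\}$ and $\e_0\notin\spanop\z(S)$. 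Gain-switching is a fixed invertible linear transformation (by Lemma \ref{L:vecprops}(3) it is the unit-lower-triangular map adding a multiple of $\e_0$ to each other coordinate's contribution; it fixes $\e_0$), so it changes neither $\dim\z(S)$ nor membership of $\e_0$ in the span. This proves the contrapositive of ``$\Rightarrow$'', and combined with the first direction yields the equivalence, hence the rank formula.

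\emph{Concluding that it is a matroid.} Once $\rk_\Ups(S) = \dim\z(S)$ is established for all $S\subseteq E_\infty$, the function $\rk_\Ups$ is the rank function of the linear matroid of the (multi)set of vectors $\{\z(e): e\in E_\infty\}$ over $\K$, which is a matroid on ground set $E_\infty$ by the standard correspondence between vector families and representable matroids. No separate verification of the matroid rank axioms (normalization, unit increase, submodularity) is needed, though one could alternatively check them directly from Definition \ref{D:rank} using that $b_\S$ comes from the frame matroid and that hyperbalance is monotone under taking subsets. The main obstacle, as noted, is the clean handling of the equivalence $\e_0\in\spanop\z(S)\iff S$ hyperfrustrated; the subtlety is that "hyperfrustrated" is defined via \emph{all} sign circuits while the span is built from \emph{single} edge vectors, so the bridge is exactly the circuit-walk identity $\z(W)=\phi(C)\e_0$ of Proposition \ref{T:walkvector} together with the switching-to-neutral normal form of Theorem \ref{L:hyperbalneutral}.
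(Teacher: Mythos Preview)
Your proof is correct and takes a genuinely different route from the paper's.  Both arguments reduce to showing $\dim\z(S) = \rk_\S(S) + \delta_\Ups(S)$ via the projection $\pi_0$, but they diverge on how to pin down the extra bit $\delta_\Ups(S)$.  The paper splits into cases: for hyperbalanced $S$ it invokes Corollary~\ref{C:neutralcircuit} to argue that every sign circuit in $S$ yields a vector dependence, forcing the vector matroid of $\z(S)$ to coincide with $\bfF(\S|S)$; for hyperfrustrated $S$ it picks an edge $e$ in a non-neutral sign circuit $C$, extends $C\setminus\{e\}$ to a frame-matroid basis $B$, and proves $\z(B\cup\{e\})$ independent by a contradiction comparing the unique linear relations in $\z(B\cup\{e\})$ and in $\pi_0\z(B\cup\{e\})$.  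You instead identify $\delta_\Ups(S)$ with the single bit ``is $\e_0$ in the span of $\z(S)$?''\ and settle it in each direction with one tool: the circuit-walk identity $\z(W)=\phi(C)\e_0$ from Proposition~\ref{T:walkvector} for the forward direction, and the gain-switching normal form of Theorem~\ref{L:hyperbalneutral} (realized, as you note, as an invertible shear of $\K^{1+n}$ fixing $\e_0$) for the reverse.  Your argument is shorter and more conceptual; the paper's basis-extension argument, by contrast, avoids appealing to Theorem~\ref{L:hyperbalneutral}, whose proof is itself a nontrivial piece of Section~\ref{sec:gains}, and so is slightly more self-contained.  One small point of hygiene: Theorem~\ref{L:hyperbalneutral} is stated for all of $\Ups$, so you are implicitly applying it to the restriction $\Ups|S$ and extending the resulting switcher to all of $V$; this is harmless but worth saying.
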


That matroid on ground set $E$ is the matroid $\M(\Ups)$, and its extension to $E_\infty$ is the extended matroid $\M_\infty(\Ups)$.
Thus, the vector model, i.e., the function $\z: E \to \K^{1+n}$, is a vector representation of this matroid.

\begin{proof}
The first step is to dispose of the extra point.  Observe that $\z(e_\infty)$ is a nonzero scalar multiple of $\z(e)$ for any non-neutral loose edge $e$.  Therefore, for the matroid we can treat $e_\infty$ as another non-neutral loose edge.

We prove that $\dim\z(S)$ satisfies Equation \eqref{E:rank}.  Then the matroid exists, because $\dim$ is a matroid rank function, and it is represented by $\z$.  

Because projection cannot increase dimension, for every edge set $S$, 
\begin{equation}
\label{E:dimred}
\dim\z(S) \geq \dim\pi_0\z(S) = \rk_\S(S) = n - b_\S(S)
\end{equation}
by \cite[Theorems 8B.1 and 5.1]{SG}.

\emph{Case 1. $S$ is hyperbalanced.}  

By the definition of hyperbalance, every sign circuit has gain sum 0.  Thus, by Corollary \ref{C:neutralcircuit} for every sign circuit $C \subseteq S$, $\z(C)$ is a circuit in the vector model.  It follows that $\z(R)$ is dependent for every subset $R \subseteq S$ that is dependent in the frame matroid $\bfF(\S)$.  That and \eqref{E:dimred} imply that the vector matroid of $\z(S)$ is the same as the frame matroid $\bfF(\S|S)$, so $\dim\z(S) = \dim\pi_0\z(S) = \rk_\S(S) = n - b_\S(S)$.

\emph{Case 2.  $S$ is not hyperbalanced.}
By definition, there exists a sign circuit $C \subseteq S$ with gain sum nonzero. By Corollary \ref{C:neutralcircuit}, $\z(C)$ is independent.

Suppose first that $C=\{e\}$, so $e$ is a positive loop or a loose edge.  Then $\z(e) = \phi(e)\e_0 \neq \0$.  Let $B$ be a basis for $S$ in $\bfF(\S)$.  No nontrivial linear combination of the vectors in $\pi_0\z(B)$ equals $\0$, so there is no way to express $\z(e)$ as a linear combination of $\z(B)$.  It follows that $\dim\z(B\cup\{e\}) = \dim\pi_0\z(B) + 1 = \rk_\S(B) + 1 = n - b_\S(S) + 1$.

Now consider the general case, $|C|>1$.  
Let $e \in C$ and $T=C \setm e$.  Extend $T$ to a basis $B$ of $\bfF(\S|S)$; thus, $\pi_0\z(B)$ is independent so $\z(B)$ is independent.  We prove $\z(B \cup \{e\})$ is independent.

Suppose by way of contradiction that it is dependent. 
Then there exists a unique circuit $\z(A\cup \{e\})$ in $\z(B \cup \{e\})$.  The projection $\pi_0\z(A\cup \{e\})$ is therefore dependent, which means that $A\cup \{e\}$ is dependent in $\bfF(\S)$.  It follows that $A\cup \{e\}$ contains the unique sign circuit in $B \cup \{e\}$, i.e., $C \subseteq A\cup \{e\}$.  But equality cannot hold because $\z(C)$ is independent; hence, $A \subset T$.

Because $\z(A\cup \{e\})$ is a minimal dependent set, there is a unique linear combination $\z(e) = \sum_{f \in A} \alpha_f \z(f)$, where all $\alpha_f \neq 0$.  Because $C$ is a sign circuit, there is a unique linear combination $\pi_0\z(e) = \sum_{g \in T} \beta_g \pi_0\z(g)$, where all $\beta_g \neq 0$.  So, $\sum_{f \in A} \alpha_f \pi_0\z(f) = \pi_0\z(e) = \sum_{g \in T} \beta_g \pi_0\z(g)$.  Now, $A, T \subseteq B$ and $\pi_0\z(B)$ is an independent set in $\K^n$; consequently $A=T$ and all $\alpha_f = \beta_f$.  However, we saw that $A \supset T$.  This contradiction proves that $\z(B \cup \{e\})$ is independent, and since $|B| = \rk_\S(S)$, we conclude that $\dim \z(B \cup \{e\}) = \rk_s(S) + 1 = n - b_\S(S) + 1$.  

Clearly, $\dim \z(S) \geq \dim \z(B \cup \{e\}) = n - b_\S(S) + 1$.  On the other hand, $\dim \z(S) \leq \dim \pi_0\z(S) + 1 = \rk_\S(S) + 1 = \rk_\S(B) + 1 = n - b_\S(S) + 1$.  That proves the formula for $\dim \z(S)$ in Case 2.
\end{proof}

\sectionpage
\section{Matroid: closure and flats}\label{sec:closure}

Our next mission is to characterize the closed sets of a matroid of a gain signed graph.  Recall that for a set $S$ of elements of a matroid, the closure of $S$ is $\{e: \rk_\Ups(S\cup\{e\})=\rk_\Ups(S)\}$.  
Equivalently, the closure of $S$ is $S\cup\{e: S\cup\{e\}\mbox{ contains a hypercircuit containing $e$}\}$, but we do not use this characterization since we have not yet found the hypercircuits.  

In the matroids $\M(\Ups)$ and $\M_\infty(\Ups)$ the closure of $S$ is called the {\em hyperclosure}, written $\clos_\Ups(S)$ in $\M(\Ups)$ and $\clos_\infty$ in $\M_\infty(\Ups)$.  
To define it we use the \emph{gain balance-closure} operator in $\Ups$, $\bcl_\Ups$, defined by 
$$
\bcl_\Ups(S) := S \cup \{e \notin S: \exists\ \text{a neutral sign circuit $C$ such that } e \in C \subseteq S \cup \{e\} \}.
$$
Note that $\bcl_\Ups(S) \subseteq \clos_\S(S)$.  Also, $\bcl_\Ups(S) \supseteq E^{00}$ since a loose edge is a sign circuit.   
It is easy to prove using a gain potential for $S$ and Theorem \ref{L:hyperbalneutral} that:

\begin{lem}\label{L:gbalclosure}
Let $S \subseteq E$.  If $S$ is hyperbalanced, then $\bcl_\Ups(S)$ is also hyperbalanced.
\end{lem}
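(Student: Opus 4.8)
The plan is to produce a single gain switcher that simultaneously neutralizes every edge of $\bcl_\Ups(S)$. Since gain switching preserves hyperbalance (Theorem~\ref{P:swhyperbal}) and an edge set all of whose edges are neutral is vacuously hyperbalanced, it is enough to exhibit a gain switcher $\theta$ with $\phi^\theta(e)=0$ for every $e\in\bcl_\Ups(S)$, and then switch back by $-\theta$.

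First I would note that $S$ being hyperbalanced is exactly the assertion that the gain signed graph $\Ups|S=(\G|S,\s,\phi)$ is hyperbalanced (the sign circuits contained in $S$ are precisely the sign circuits of $\Ups|S$, and neutrality of a sign circuit does not depend on the ambient graph). Applying Theorem~\ref{L:hyperbalneutral} to each component of $\Ups|S$, and choosing $\theta$ arbitrarily (say $0$) on the vertices outside $V(S)$, yields a gain switcher $\theta\colon V\to\K^+$ with $\phi^\theta|_S\equiv0$; because restriction commutes with gain switching, the same $\theta$ makes every edge of $S$ neutral in $\Ups^\theta$, and by Theorem~\ref{P:swhyperbal} it carries each neutral sign circuit of $\Ups$ to a neutral sign circuit of $\Ups^\theta$.

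Next I would fix an arbitrary $e\in\bcl_\Ups(S)\setminus S$ together with a witnessing neutral sign circuit $C$ with $e\in C\subseteq S\cup\{e\}$. After switching by $\theta$, every edge of $C$ other than $e$ lies in $S$ and is therefore neutral, while $\phi^\theta(C)=\phi(C)=0$. Now I would read $\phi^\theta(C)$ off of Proposition~\ref{L:gainsum}: if $C$ is a positive circle then $\phi^\theta(C)$ is the sum of the (coherently oriented) edge gains, hence $\pm\phi^\theta(e)$; if $C$ is a loose edge then $C=\{e\}$ and $\phi(e)=0$ already; and if $C$ is a handcuff, then $e$ lies in exactly one of the rooted negative figures $(C_1,u_1)$, $(C_2,u_2)$ or the connecting path $P$, the other two terms of the relevant formula in Proposition~\ref{L:gainsum} contributing $0$ since each of those quantities is a $\pm1$-combination of the edge gains of its figure or path. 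In every instance $\phi^\theta(C)$ equals $\pm\phi^\theta(e)$ or $\pm2\phi^\theta(e)$ (the coefficient $2$ occurring only for a doubly traversed connecting path or for a half-edge figure), so the coefficient of $\phi^\theta(e)$ is $\pm1$ or $\pm2$, which is nonzero because $\K$ has characteristic $\neq2$; combined with $\phi^\theta(C)=0$ this forces $\phi^\theta(e)=0$.

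Hence $\phi^\theta$ vanishes on all of $\bcl_\Ups(S)$, so $\Ups^\theta|_{\bcl_\Ups(S)}$ is hyperbalanced, and switching back by Theorem~\ref{P:swhyperbal} shows $\bcl_\Ups(S)$ is hyperbalanced in $\Ups$. The only point requiring any care is the handcuff case of the third step, namely confirming that in each sub-case the coefficient of $\phi^\theta(e)$ is never $0$; this is immediate from the three displayed formulas of Proposition~\ref{L:gainsum} once one recalls that $\operatorname{char}\K\neq2$, so I do not expect a genuine obstacle.
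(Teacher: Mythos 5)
Your proposal is correct and follows exactly the route the paper intends (the paper offers only the hint ``use a gain potential for $S$ and Theorem~\ref{L:hyperbalneutral}''): switch gains so that $S$ is all neutral, then read off from Proposition~\ref{L:gainsum} that each witnessing neutral sign circuit forces the added edge's switched gain to vanish, the coefficient being $\pm1$ or $\pm2$ and hence invertible since $\operatorname{char}\K\neq2$. Your fleshing-out of the handcuff cases is accurate, so there is nothing to add.
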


\begin{thm}\label{T:closure}
Let $S \subseteq E_\infty$.  If $S$ is hyperbalanced, 
\begin{equation}
\begin{aligned}
\clos_\Ups(S) ,
&= \bcl_\Ups(S)
\\&= [E(\theta){:}U_\S(S)] \cup [E(\theta,\zeta){:}\pib(S)] \cup E^{00}.
\end{aligned}
\label{E:closurehbal}
\end{equation}
where $\zeta$ is a sign potential for $S{:}U_\S(S)^c$ and $-\theta$ is a gain potential for $S$.  The closure is hyperbalanced and is the same in $\M(\Ups)$ and $\M_\infty(\Ups)$.

If $S$ is hyperfrustrated, then in $\M(\Ups)$ 
\begin{equation}
\begin{aligned}
\clos_\Ups(S) 
&= \clos_\S(S)
\\&= [E{:}U_\S(S)] \cup [E(\zeta){:}\pib(S)] \cup E^0,
\end{aligned}
\label{E:closure}
\end{equation}
where $\zeta$ is a sign switcher for $S{:}U_\S(S)^c$.  The closure $\clos_\infty(S)$ in $\M_\infty(\Ups)$ is the same with the addition of $e_\infty$.
\end{thm}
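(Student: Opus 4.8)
The plan is to work entirely inside the vector model: by Theorem \ref{T:rank} the matroid $\M(\Ups)$ is the vector matroid of $\z\colon E\to\K^{1+n}$, so $\clos_\Ups(S)=\{e\in E:\z(e)\in\operatorname{span}\z(S)\}$, and likewise for $\M_\infty(\Ups)$ with $E$ replaced by $E_\infty$. I would dispose of the hyperfrustrated case first, as it is the shorter one. When $S$ is hyperfrustrated, Definition \ref{D:rank} and Theorem \ref{T:rank} give $\dim\z(S)=\dim\pi_0\z(S)+1$, so the kernel of $\pi_0$ restricted to $\operatorname{span}\z(S)$ is one-dimensional, which forces $\e_0\in\operatorname{span}\z(S)$ (since $\ker\pi_0=\K\e_0$). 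Hence $\z(e)\in\operatorname{span}\z(S)$ if and only if $\pi_0\z(e)=\x(e)\in\operatorname{span}\x(S)$, i.e.\ $\clos_\Ups(S)=\clos_\S(S)$; the displayed formula \eqref{E:closure} is then immediately Theorem \ref{L:sgmatroid}. Because $\z(e_\infty)=\e_0\in\operatorname{span}\z(S)$, the extra point lies in $\clos_\infty(S)$, and since $\M(\Ups)=\M_\infty(\Ups)\setm e_\infty$ we have $\clos_\infty(S)\cap E=\clos_\Ups(S)$, so $\clos_\infty(S)=\clos_\Ups(S)\cup\{e_\infty\}$ as claimed.

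For hyperbalanced $S$ I would first establish $\clos_\Ups(S)=\bcl_\Ups(S)$ directly from the rank function. The inclusion $\bcl_\Ups(S)\subseteq\clos_\Ups(S)$ is immediate, since a neutral loose edge is a loop of $\M(\Ups)$ and, by Corollary \ref{C:neutralcircuit}, a neutral sign circuit $C$ with $e\in C\subseteq S\cup\{e\}$ is a matroid circuit through $e$. For the reverse, if $e\in\clos_\Ups(S)$ then $n-b_\S(S\cup\{e\})+\delta_\Ups(S\cup\{e\})=\rk_\Ups(S\cup\{e\})=\rk_\Ups(S)=n-b_\S(S)$; since $b_\S(S\cup\{e\})\le b_\S(S)$ by monotonicity of the frame rank $\rk_\S=n-b_\S$, and $\delta_\Ups\ge0$, this forces $\delta_\Ups(S\cup\{e\})=0$ and $b_\S(S\cup\{e\})=b_\S(S)$. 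Thus $S\cup\{e\}$ is hyperbalanced and $e\in\clos_\S(S)$, so $S\cup\{e\}$ either contains a sign circuit through $e$ (necessarily neutral) or $e$ is a neutral loose edge; either way $e\in\bcl_\Ups(S)$. Hyperbalance of $\clos_\Ups(S)$ is then Lemma \ref{L:gbalclosure}, and, as the switched vectors of $S$ will lie in the hyperplane $x_0=0$ while $\z(e_\infty)=\e_0$ does not, $e_\infty\notin\clos_\infty(S)$ and the two closures coincide.

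It remains to match $\clos_\Ups(S)$ with the explicit set on the right of \eqref{E:closurehbal}. Using Lemma \ref{L:signswitching} and Lemma \ref{L:0gaintree}/Theorem \ref{L:hyperbalneutral} I would choose a sign potential $\zeta$ for $S{:}U_\S(S)^c$ and a gain switcher $\theta$ with $-\theta$ a gain potential for $S$, so that after sign-switching by $\zeta$ and gain-switching by $\theta$ every edge of $S$ has gain $0$ and every sign-balanced component of $\S|S$ is all positive. By Lemma \ref{L:vecprops} these switchings act on $\K^{1+n}$ by the invertible maps $D_\zeta$ and a shear fixing $\e_0$, so they preserve the matroid and all spans. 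After switching, the span of the vectors of $S$ equals $\bigoplus_{v_i\in U_\S(S)}\K\e_i\;\oplus\;\bigoplus_{B\in\pib(S)}\operatorname{span}\{\e_j-\e_i:v_i,v_j\in B\}$, a subspace of $\{x_0=0\}$; one then checks, by cases on the type of $e$ and the location of its endpoints, exactly when the switched vector of $e$ lies in this span. The $\e_0$-coefficient $\phi^\theta(e)$ must vanish, i.e.\ $e\in E(\theta)$; an edge meeting a balanced part $B$ must have both endpoints in $B$ and switch positive there, contributing $E(\theta,\zeta){:}\pib(S)$ (negative loops are automatically excluded here, positive loops behave like loose edges); a half edge or negative loop not meeting a balanced part must sit in an unbalanced component, contributing $E(\theta){:}U_\S(S)$; and loose edges contribute precisely $E^{00}$. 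Assembling the cases gives the formula. I expect this final case analysis---reconciling the linear-algebraic membership condition with the three not-quite-induced pieces of the formula, and in particular getting loops and half edges right---to be the main obstacle; everything else follows routinely from the vector model and the switching reductions.
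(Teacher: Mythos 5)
Your proof is correct, but it takes a genuinely different route from the paper's. The paper argues purely combinatorially from the rank function: it writes down the candidate flat $A$, verifies $b_\S(A)=b_\S(S)$ so that $\rk_\Ups(A)=\rk_\Ups(S)$, and then checks that adjoining any $e\notin A$ drops $b_\S$; the hyperbalanced case is handled by restricting to $E(\theta)$ and quoting Theorem \ref{L:sgmatroid}. You instead work in the vector model, and your key observation --- that hyperfrustration of $S$ forces $\e_0\in\operatorname{span}\z(S)$, so that membership of $\z(e)$ in the span reduces to membership of $\x(e)$ in $\operatorname{span}\x(S)$ --- collapses the hyperfrustrated case instantly onto Theorem \ref{L:sgmatroid}, and your direct rank computation showing $\clos_\Ups(S)=\bcl_\Ups(S)$ in the hyperbalanced case is a nice piece of bookkeeping the paper leaves implicit. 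Two remarks. First, the case analysis you flag as ``the main obstacle'' dissolves if you run the same projection trick in reverse: after switching, $\operatorname{span}\z(S)$ lies in $\{x_0=0\}$, where $\pi_0$ is injective, so $\z(e)\in\operatorname{span}\z(S)$ if and only if $\phi^\theta(e)=0$ (i.e.\ $e\in E(\theta)$) and $\x(e)\in\operatorname{span}\x(S)$ (i.e.\ $e\in\clos_\S(S)$ by Theorems \ref{T:sgrep} and \ref{L:sgmatroid}); intersecting $E(\theta)$ with the signed-graph closure formula yields \eqref{E:closurehbal} term by term, with $E(\theta)\cap E^0=E^{00}$ and $E(\theta)\cap E(\zeta)=E(\theta,\zeta)$, and no edge-type casework is needed. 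Second, the price of your approach is generality: the paper deliberately proves everything after Section \ref{sec:rank} without the vector representation so that the results transfer verbatim to arbitrary abelian gain groups in Section \ref{sec:abstract}, whereas your argument is tied to $\K^+$ being the additive group of a field. For the theorem as stated that is fine, but it would not serve as the proof in the abstract setting.
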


Note that if $X \in \pib(S)$ and $E{:}X$ is sign balanced, then $E(\theta,\zeta){:}X = E(\theta){:}X$ in the first part and $E(\zeta){:}X = E{:}X$ in the second part so $\zeta$ is not necessary for that component of $S$.

\begin{proof}
Recall that we can treat $e_\infty$ as a non-neutral loose edge, so it need not be considered separately.

Let us assume $S$ is hyperfrustrated, so that $\rk_\Ups(S) = n - b_\S(S) + 1$.  
Let $\zeta$ be a sign switcher that switches $S{:}U_\S(S)^c$ to all positive.  
Write $A :=  [E{:}U_\S(S)] \cup [E(\zeta){:}\pib(S)] \cup E^0$.  
It is clear that $S \subseteq A$; thus $\clos_\Ups(S) \subseteq \clos_\Ups(A)$ and $\rk_\Ups(S) \leq \rk_\Ups(A)$.  We want to show that $A$ is closed and has the same rank as $S$.

For the rank, since $A$ is hyperfrustrated, $\rk_\Ups(A) = n - b_\S(A) + 1$, so we should prove $b_\S(A) = b_\S(S)$.  Consider $B \in \pib(S)$.  Then $E(\zeta){:}B$ is sign balanced, and it is connected because $S{:}B$ is connected.  Therefore, $B$ is contained in a block of $\pib(A)$.  But no edge of $A$ connects two different sign-balanced components of $S$, so $B\in\pib(A)$.  This proves $b_\S(A) \geq b_\S(S)$, so that $\rk_\Ups(A) \leq \rk_\Ups(S)$.  That implies equality, so $A \subseteq \clos_\Ups(S)$ and also $\pib(A) = \pib(S)$.

Now we prove that an edge $e \notin A$ is also not in $\clos_\Ups(S)$.  It is sufficient to prove that $b_\S(A \cup \{e\}) < b_\S(A)$, since both $A$ and $A\cup\{e\}$ are hyperfrustrated.  If $e$ joins a sign-balanced component of $A$ to another component, it reduces $b_\S$.  If it joins vertices in a sign-balanced component $A{:}B$ or is a half edge in $E{:}B$, then $A{:}B \supseteq E(\zeta){:}B$ $\implies$ $e \notin E(\zeta)$ $\implies$ $(A \cup \{e\}){:}B$ is sign-unbalanced; then adding $e$ also reduces $b_\S$.   These are the only possibilities, because $E{:}U_\S(S) \cup E^0 \subseteq A$.   That completes the proof that $A = \clos_\Ups(S)$.

If $S$ is hyperbalanced, then we may restrict attention to $\Ups|E(\theta)$ for some gain potential $-\theta$ and either apply the same reasoning as in the previous case or simply appeal to Lemma \ref{L:sgmatroid}.  The closure is hyperbalanced because it is contained in $E(\theta)$.
\end{proof}

\begin{thm}\label{T:closed}
The closed sets of $\M(\Ups)$ are those of the forms
\begin{equation}
[E{:}U] \cup [E(\zeta){:}\pi] \cup E^0,
\label{E:closed}
\end{equation}
where $U \subseteq V$, $\pi$ partitions $U^c$, and $\zeta$ is a sign function on $U^c$, and 
\begin{equation}
[E(\theta){:}U] \cup [E(\theta,\zeta){:}\pi] \cup E^{00},
\label{E:closedhbal}
\end{equation}
where $\theta$ is a gain switcher.  
A closed set of type \eqref{E:closedhbal} is always hyperbalanced, and every hyperbalanced closed set has the form \eqref{E:closedhbal}.

The closed sets of $\M_\infty(\Ups)$ are the same, if the set is hyperbalanced, but the same with the addition of $e_\infty$ if the set is hyperfrustrated.
\end{thm}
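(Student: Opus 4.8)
The plan is to read the characterization off the hyperclosure formula of Theorem~\ref{T:closure}. A set is closed exactly when it equals its own hyperclosure, so one direction is immediate: if $S$ is a closed set of $\M(\Ups)$, then $S=\clos_\Ups(S)$, and Theorem~\ref{T:closure} exhibits $\clos_\Ups(S)$ in the shape \eqref{E:closedhbal} when $S$ is hyperbalanced and in the shape \eqref{E:closed} when $S$ is hyperfrustrated. Hence every closed set of $\M(\Ups)$ has one of the two displayed forms; this already gives the assertion that every hyperbalanced closed set of $\M(\Ups)$ has the form \eqref{E:closedhbal}.

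For the other direction I would take an arbitrary set $A$ of one of the two displayed forms and prove $\clos_\Ups(A)=A$ by identifying the data that feed into Theorem~\ref{T:closure}. The only nuisance is that the forms are not canonical: in $[E{:}U]\cup[E(\zeta){:}\pi]\cup E^0$ the subgraph $\S{:}U$ may contain sign-balanced components, a block $E(\zeta){:}B$ may be disconnected, and $\zeta$ is determined only up to switching on the blocks of $\pi$; similarly for \eqref{E:closedhbal}. I would first normalize $A$, rewriting it as an equal expression in which $\S{:}U$ (respectively $(\Ups|E(\theta)){:}U$) has no sign-balanced component; this is harmless, since a balanced component with vertex set $W\subseteq U$ satisfies $E{:}W=E(\zeta'){:}W$ for its sign potential $\zeta'$ and so can be shifted from $U$ into the partition. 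After normalization, $U_\S(A)=U$, the blocks of $\pib(A)$ are the vertex components of $[E(\zeta){:}\pi]$ and refine $\pi$, the restriction of $\zeta$ to each such component is that component's sign potential up to a global sign, and in the second form $-\theta$ restricts to a gain potential for $A$ on $U$ and on each component. Inserting these into \eqref{E:closure} or \eqref{E:closurehbal}, and using that each block of $\pi$ is the edge-disjoint union of the subgraphs induced on the components of its own $E(\zeta)$-part, the formula returns exactly $A$; so $A$ is closed. In the hyperbalanced form one also notes that $A\subseteq E(\theta)$, and since gain-switching by $\theta$ makes every edge of $E(\theta)$ neutral, $\Ups|E(\theta)$---hence $A$---is hyperbalanced by Theorems~\ref{P:swhyperbal} and~\ref{L:hyperbalneutral}; this simultaneously proves that every set of type \eqref{E:closedhbal} is hyperbalanced.

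For the extended matroid I would use the device, already exploited in the proof of Theorem~\ref{T:rank}, that $e_\infty$ behaves in $\M_\infty(\Ups)$ exactly like one further non-neutral loose edge, so that $\M_\infty(\Ups)=\M(\Ups')$, where $\Ups'$ is obtained from $\Ups$ by adjoining such an edge. Applying the characterization just proved to $\Ups'$, and observing that $e_\infty$ contributes nothing to any induced edge set $E{:}U$ or $E(\zeta){:}\pi$ and nothing to the neutral-loose-edge set but does lie in the loose-edge set, shows that the closed sets of $\M_\infty(\Ups)$ are precisely the sets of form \eqref{E:closedhbal} together with the sets $[E{:}U]\cup[E(\zeta){:}\pi]\cup E^0\cup\{e_\infty\}$. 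A closed set of $\M_\infty(\Ups)$ omits $e_\infty$ exactly when it is hyperbalanced: a hyperfrustrated closed set must contain $e_\infty$, since its hyperclosure does (the singleton $\{e_\infty\}$ being a non-neutral sign circuit), while a set of form \eqref{E:closedhbal} is hyperbalanced and, meeting the loose edges only in $E^{00}$, does not contain $e_\infty$. This is the stated description.

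The step I expect to be the real work is the normalization-and-matching argument in the second paragraph: one must verify that moving balanced components out of $U$, refining $\pi$ into the components of $[E(\zeta){:}\pi]$, and fixing the global sign of $\zeta$ on each component are mutually compatible and together reduce the identity $\clos_\Ups(A)=A$ to a direct quotation of \eqref{E:closure} and \eqref{E:closurehbal}. Everything else is a routine appeal to Theorems~\ref{T:closure}, \ref{P:swhyperbal}, and~\ref{L:hyperbalneutral}.
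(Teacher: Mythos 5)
Your proposal is correct and follows essentially the same route as the paper: the forward inclusion is read off Theorem \ref{T:closure}, and the converse is obtained by showing that a set of either displayed form coincides with its own hyperclosure --- the paper does this by observing that $\pi(A)$ refines $\pi\cup\{U\}$ and concluding $\clos_\Ups(A)\subseteq A$, which carries the same content as your normalization-and-matching step, while the hyperbalanced case is likewise handled by restricting to $E(\theta)$ and the extended matroid by treating $e_\infty$ as a non-neutral loose edge. I see no gaps.
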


The first kind of closed set, \eqref{E:closed}, is the same as the second, hence hyperbalanced and redundant, if $\Ups$ is hyperbalanced, but it is usually not hyperbalanced if $\Ups$ is not hyperbalanced.  It will be hyperbalanced if and only if all loose edges are neutral and $E{:}U$ is hyperbalanced.

Theorems \ref{T:closure} and \ref{T:closed} demonstrate one function of the extra point $e_\infty$: it expresses hyperbalance in purely matroidal terms because an edge set $S \subseteq E$ is hyperbalanced if and only if its closure in $\M_\infty(\Ups)$ does not contain $e_\infty$.

\begin{proof}
Theorem \ref{T:closure} implies that every closed set has one of these two forms.  We prove the converse.  

Let $A$ be the set in \eqref{E:closed}.  The partition $\pi(A)$ refines $\pi \cup \{U\}$.  Thus, if $B_A \in \pi(A)$ is contained in some $B \in \pi$, we have $E(\zeta){:}B_A \subseteq E(\zeta){:}B \subseteq A$.  If $B_A \subseteq U$, then $E{:}B_A \subseteq E{:}U \subseteq A$.  The conclusion is that $\clos_\Ups(A) = [E{:}U_\S(A)] \cup [E(\zeta){:}\pi(A)] \cup E^0 \subseteq A$, from which equality is obvious.

In type \eqref{E:closedhbal}, we can restrict attention to $E(\theta)$, which is hyperbalanced, so its matroid is that of the signed graph $\S(\theta)$.  Then $A$ is closed by Lemma \ref{L:sgmatroid}.  An alternative is to repeat the proof for type \eqref{E:closed} with $E(\theta)$ in place of $E$ and $E^{00}$ in place of $E^0$.
\end{proof}

There is redundancy in Theorem \ref{T:closed}, inasmuch as a flat $A$ may be representable with different choices of $U$, $\pi$, $\zeta$, and $\theta$.  If $E{:}U$ has components that are sign balanced, and in \eqref{E:closedhbal} hyperbalanced, then $U$ can be made smaller and $\pi$ larger.  The smallest possible set $U$ is $U_\S(A)$.  Once $U$ is chosen, the most refined partition is $\pi(A{:}U^c)$, but sometimes $\pi$ may be chosen to combine components of $A{:}U^c$.  Then given $U$ and $\pi$, the switching function(s) $\zeta$ and $\theta$ are determined up to sign (for $\zeta$) and (for $\theta$) translation on each component of $A{:}U^c$ if $U=U_\S(A)$ and $\pi=\pib(A)$ but not necessarily with other choices.  This versatility in Theorem \ref{T:closed} is sometimes something to be careful about.

The flats of $\M(\Ups)$ form a geometric lattice, which we denote by $\Lat\Ups$.  
There is also the semilattice of hyperbalanced flats, which we call $\Latb\Ups$; it is a geometric semilattice that corresponds to the intersection semilattice of the hyperplane representation of $\Ups$, so it is of particular interest; see Section \ref{sec:hyperplanes}.

\sectionpage
\section{Matroid: coatoms and cocircuits}\label{sec:cocircuit}

The coatoms of $\Lat\Ups$, the lattice of closed sets, are of interest, but even more so their complements, the cocircuits of $\M(\Ups)$, and also the hyperbalanced coatoms, which are the maximal elements of $\Latb\Ups$ when $\Ups$ is not hyperbalanced.  
The coatoms and cocircuits are quite different depending on whether $\Ups$ is hyperbalanced or not.
On the whole it is easier to describe the cocircuits (which we may call hypercocircuits to distinguish them from the sign cocircuits in $\bfF(\S)$).  

We begin with a pair of lemmas.

\begin{lem}\label{L:sbasish}
Every basis of $\bfF(\S)$ is hyperbalanced.
\end{lem}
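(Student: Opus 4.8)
The plan is to unwind definitions: a basis $B$ of $\bfF(\S)$ is by definition an independent set of that matroid, so $B$ contains no circuit of $\bfF(\S)$. By the definition of the frame matroid in Section~\ref{sec:sg}, the circuits of $\bfF(\S)$ are precisely the sign circuits of $\S$. Hence $B$ contains no sign circuit whatsoever, and the condition defining hyperbalance in Definition~\ref{D:neutral}---that every sign circuit contained in the set be neutral---holds vacuously. So $B$ is hyperbalanced; indeed the same one-line argument shows that \emph{every} independent set of $\bfF(\S)$ is hyperbalanced.

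The only point needing a moment's care is the identification of the two senses of ``sign circuit'' in play: the combinatorial list in Section~\ref{sec:sg} and Definition~\ref{D:neutral} (positive circle, loose edge, tight handcuff, loose handcuff) versus ``circuit of the matroid $\bfF(\S)$'' implicit in the phrase ``basis of $\bfF(\S)$.'' These coincide by the very definition of $\bfF(\S)$, so there is nothing further to check; in particular $B$ contains no loose edge, consistent with the fact that a loose edge is a loop of $\bfF(\S)$. I do not expect any genuine obstacle: this is a bridge lemma whose value is downstream. Combining it with Theorem~\ref{T:rank}, a basis $B$ of $\bfF(\S)$ satisfies $\rk_\Ups(B) = n - b_\S(B) = |B| = \rk_\S(B)$, so $B$ is also independent in $\M(\Ups)$; this is exactly the foothold wanted for the analysis of coatoms and cocircuits of $\M(\Ups)$, split by whether $\Ups$ is hyperbalanced or hyperfrustrated, that follows.

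If a more graph-theoretic proof is preferred, one can instead describe a basis of $\bfF(\S)$ explicitly---a spanning tree in each sign-balanced component of $\S$, together with a spanning tree plus one negative figure (a negative circle or a half edge) in each sign-unbalanced component---and observe directly that such a spanning subgraph contains neither a positive circle nor a pair of negative figures, hence no sign circuit. But the matroidal argument above makes this description unnecessary for the present statement.
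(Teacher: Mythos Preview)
Your proof is correct and takes essentially the same approach as the paper: a basis of $\bfF(\S)$ is independent, hence contains no sign circuit, so hyperbalance holds vacuously. The paper's own proof is the one-line version of exactly this observation.
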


\begin{proof}
A basis cannot be hyperfrustrated because it does not contain any sign circuits.
\end{proof}

\begin{lem}\label{L:maxhbal}
Suppose $\Ups$ is hyperfrustrated.  Then every maximal hyperbalanced edge set $A$ is a coatom with $b_\S(A) = b_\S(E)$ and $\rk_\Ups A = \rk(\Ups) -1 = n-b_\S(E)$.
\end{lem}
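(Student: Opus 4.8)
The plan is to reduce the lemma to the single equality $b_\S(A)=b_\S(E)$ and then recognize $A$ as a coatom. First I would record the two rank computations that frame everything: since $\Ups$ is hyperfrustrated, Definition~\ref{D:rank} gives $\rk(\Ups)=\rk_\Ups(E)=n-b_\S(E)+1$, while, $A$ being hyperbalanced, the same definition gives $\rk_\Ups(A)=n-b_\S(A)$. So once I know $b_\S(A)=b_\S(E)$ it follows that $\rk_\Ups(A)=n-b_\S(E)=\rk(\Ups)-1$, and it then remains only to see that a flat of corank~$1$ is a coatom of $\Lat\Ups$ --- which is automatic, because the only flat of full rank is the whole ground set $E$, so a flat of rank $\rk(\Ups)-1$ is covered by nothing but $E$.

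Next I would check that $A$ is a flat. Because $A$ is hyperbalanced, Theorem~\ref{T:closure} identifies $\clos_\Ups(A)$ with $\bcl_\Ups(A)$ and asserts that this closure is again hyperbalanced; since $\bcl_\Ups(A)\supseteq A$, the maximality of $A$ among hyperbalanced edge sets forces $\bcl_\Ups(A)=A$, so $A=\clos_\Ups(A)$ is closed.

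The heart of the argument is the equality $b_\S(A)=b_\S(E)$, equivalently $\rk_\S(A)=\rk_\S(E)$ (both sides being $n$ minus the relevant count of sign-balanced components). I would argue by contradiction: if $\rk_\S(A)<\rk_\S(E)$, then $\clos_\S(A)\subsetneq E$, so there is an edge $e\in E\setm\clos_\S(A)$, necessarily outside $A$. By the matroid characterization of closure, no circuit of $\bfF(\S)$ through $e$ is contained in $A\cup\{e\}$; hence every sign circuit lying inside $A\cup\{e\}$ already lies in $A$ and is therefore neutral, so $A\cup\{e\}$ is still hyperbalanced --- contradicting the maximality of $A$. Thus $\rk_\S(A)=\rk_\S(E)$. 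Combining this with the two rank formulas of the first paragraph gives $\rk_\Ups(A)=\rk(\Ups)-1$, and with the second paragraph this shows $A$ is a coatom, as required.

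The one step that needs a genuine (if small) idea is this last one: the observation that adjoining to a maximal hyperbalanced set an edge lying outside the frame-matroid closure creates no new sign circuit, and so cannot destroy hyperbalance --- which is exactly what pins $A$ down to span $\bfF(\S)$. Everything else is bookkeeping with Definition~\ref{D:rank} and Theorem~\ref{T:closure}.
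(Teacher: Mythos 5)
Your proof is correct, but it reaches the coatom property by a different route than the paper, and the difference is worth noting. The paper, after observing (as you do) that $A$ is closed, establishes the covering relation directly: it uses the explicit description of $\clos_\Ups$ from Theorem \ref{T:closure} and a component-by-component analysis to show that $\clos_\Ups(A\cup\{e\})=E$ for every $e\notin A$ (the maximality of $A$ is invoked there to show that a component in which $A\cup\{e\}$ is sign balanced must have all of $E{:}X$ sign balanced). The rank equalities and $b_\S(A)=b_\S(E)$ then fall out of $A$ being a coatom. You run the implication the other way: your key step is that a maximal hyperbalanced set must span the frame matroid $\bfF(\S)$, proved by the clean observation that adjoining an edge $e\notin\clos_\S(A)$ creates no sign circuit through $e$, hence cannot destroy hyperbalance --- contradicting maximality. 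From $\rk_\S(A)=\rk_\S(E)$ you get $\rk_\Ups(A)=\rk(\Ups)-1$, and the coatom property is then the general matroid fact that a flat of corank $1$ (here $A\neq E$ since $A$ is hyperbalanced and $E$ is not) is covered only by $E$. Your argument is shorter and more purely matroid-theoretic, trading the paper's explicit computation of $\clos_\Ups(A\cup\{e\})$ for standard facts about closures and corank-one flats; the paper's version buys the concrete identity $\clos_\Ups(A\cup\{e\})=E$ for each individual $e$, which is sometimes useful downstream. Both are complete proofs of the lemma.
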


\begin{proof}
A maximal hyperbalanced edge set $A$ is closed in $\M(\Ups)$, either by Theorem \ref{T:closure}, or simply because adding an edge to it makes it hyperfrustrated, which increases its rank.

By maximality $A$ is connected and spanning in each component of $\Ups$.  We use \eqref{E:closure} to prove that, for any edge $e \notin A$, $\clos_\Ups(A \cup e) = E$, by proving that any other edge $f \notin A\cup e$ is in $\clos_\Ups(A \cup e)$.  If $f \in E^0$, then $f \in \clos_\Ups(A\cup e)$.  Otherwise, $f$ is in a component $\Ups{:}X$.  If $(A \cup e){:}X$ is sign unbalanced, then $X \subseteq U_\S(A\cup e)$ so $f \in E{:}U_\S(A\cup e) \subseteq \clos_\Ups(A \cup e)$.  If $(A \cup e){:}X$ is sign balanced, we prove that $E{:}X$ is sign balanced.  By maximality of $A$, $(A\cup f){:}X$ is hyperfrustrated for $f \in (E\setm A){:}X$, so $f$ belongs to a non-neutral sign circuit in $A \cup f$, which can only be a positive circle because $A{:}X$ contains no negative figure.  Therefore, $f \in \bcl_\S(A)$.  Since $f$ was any edge in $(E \setm A){:}X$, it follows that $E{:}X$ is sign balanced.  Therefore, $X \in \pib(A \cup e)$ and $E{:}X \subseteq E{:}\pib(A\cup e) \subseteq \clos_\Ups(A \cup e)$.
\end{proof}

A \emph{hyperbalancing set} for $\Ups$ is an edge set whose deletion results in hyperbalance.

Note that, since $e_\infty$ can be treated as a non-neutral loose edge, the extended matroid $\M_\infty(\Ups)$ falls under the hyperfrustrated case, part (II) of the following theorem.

\begin{thm}\label{T:cocircuit}
Let $\Ups$ be a gain signed graph with signed graph $\S = (\G,\s)$.  The coatoms and cocircuits of $\M(\Ups)$ are of the following types.
\begin{enumerate}[{\rm I.}]
\item  If $\Ups$ is hyperbalanced, they are the coatoms and cocircuits of $\bfF(\S)$.

\item  If $\Ups$ is hyperfrustrated, it has both hyperbalanced and hyperfrustrated coatoms.  
  \begin{enumerate}[{\rm(A)}]
  \item For a hyperbalanced set $A$ the following properties are equivalent:

  \begin{enumerate}[{\rm(i)}]
  \item $A$ is a coatom.
  \item $A$ is a maximal hyperbalanced edge set.
  \item $A$ is the closure $\clos_\Ups(F)$ of a basis $F$ of $\bfF(\S)$.
  \item $A$ is the balance-closure $\bcl_\Ups(F)$ of a basis $F$ of $\bfF(\S)$.
  \end{enumerate}
  The corresponding hypercocircuits are the minimal hyperbalancing sets.

  \item  The hyperfrustrated coatoms are the coatoms of $\bfF(\S)$ that are hyperfrustrated.  
  The corresponding hypercocircuits are:
  \begin{enumerate}[{\rm(i)}]
  \item If $\G{:}X$ is a component such that $\Ups \setm X$ is hyperfrustrated (this includes the case where $\Ups \setm X$ contains a non-neutral loose edge), any cocircuit of $\bfF(\S{:}X)$.
  \item If $\G{:}X$ is a component such that $\Ups \setm X$ is hyperbalanced, 
	  \begin{enumerate}[{\rm(a)}]
	  \item a bond $D$ of $\G{:}X$ such that $[\Ups{:}X] \setm D$ is hyperfrustrated, if $\S{:}X$ is sign balanced;
	  \item either a minimal deletion set $D$ of $\S{:}X$ such that $[\Ups{:}X] \setm D$ is hyperfrustrated, or a cut $D$ of $\G{:}X$ of the types in Theorem \ref{L:sgmatroid}(D\ref{del})--(D\ref{bondu-del}) such that $[\Ups{:}X] \setm D$ is hyperfrustrated, if $\S{:}X$ is sign unbalanced.
	  \end{enumerate}
  \end{enumerate}
  
  \end{enumerate}
\end{enumerate}
\end{thm}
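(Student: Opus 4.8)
The plan is to exploit the standard fact that the cocircuits of a matroid are exactly the complements of its hyperplanes (the coatoms of the lattice of flats), together with the classification of the flats of $\M(\Ups)$ in Theorem~\ref{T:closed}: every flat is \emph{hyperbalanced}, of the form~\eqref{E:closedhbal}, or \emph{hyperfrustrated}, of the form~\eqref{E:closed}. Since $e_\infty$ behaves like a non-neutral loose edge, $\M_\infty(\Ups)$ always falls under the hyperfrustrated case and needs no separate treatment. Case~I is immediate: if $\Ups$ is hyperbalanced then so is every subset of $E$, whence $\delta_\Ups\equiv 0$ and $\rk_\Ups=\rk_\S$ on all of $2^E$, so $\M(\Ups)=\bfF(\S)$ and the two matroids have the same coatoms and cocircuits.

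For part~II(A) I would prove the cycle (ii)$\Rightarrow$(i)$\Rightarrow$(ii) together with (ii)$\Leftrightarrow$(iii)$\Leftrightarrow$(iv). The implication (ii)$\Rightarrow$(i), with $b_\S(A)=b_\S(E)$, is exactly Lemma~\ref{L:maxhbal}. For (i)$\Rightarrow$(ii): if a hyperbalanced coatom $A$ were contained in a strictly larger hyperbalanced set $A'$, then by Theorem~\ref{T:closure} the flat $\clos_\Ups(A')$ would be hyperbalanced, hence a \emph{proper} flat (as $E$ is hyperfrustrated) strictly containing $A$, contradicting that $A$ is a maximal proper flat. For (ii)$\Leftrightarrow$(iii): when $A$ is hyperbalanced one has $\M(\Ups)|A=\bfF(\S)|A$, so using $b_\S(A)=b_\S(E)$ from Lemma~\ref{L:maxhbal} a basis $F$ of $\bfF(\S)|A$ has cardinality $n-b_\S(A)=\rk\bfF(\S)$ and is therefore a basis of $\bfF(\S)$, while $\clos_\Ups(F)=A$ follows by comparing the ranks of these two nested flats; conversely, a basis $F$ of $\bfF(\S)$ is hyperbalanced by Lemma~\ref{L:sbasish}, so $\clos_\Ups(F)$ is hyperbalanced of corank~$1$ by Theorem~\ref{T:closure}, hence a coatom, hence a maximal hyperbalanced edge set by the implication just proved. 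Finally (iii)$\Leftrightarrow$(iv) is the identity $\clos_\Ups(F)=\bcl_\Ups(F)$ for the hyperbalanced set $F$, supplied by Theorem~\ref{T:closure}. Passing to complements, the corresponding hypercocircuits are precisely the minimal hyperbalancing sets, because complementation is an order-reversing bijection sending the maximal hyperbalanced edge sets to the minimal edge sets with hyperbalanced complement.

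For part~II(B) I would first identify the hyperfrustrated coatoms of $\M(\Ups)$ with the hyperfrustrated coatoms of $\bfF(\S)$: by Theorem~\ref{T:closed} a hyperfrustrated flat of $\M(\Ups)$ has the form~\eqref{E:closed}, which is exactly the shape of a flat of $\bfF(\S)$, and for such a set $A$ the coatom condition is, in either matroid, $b_\S(A)=b_\S(E)+1$ (compare $\rk_\Ups A=n-b_\S(A)+1$ with $\rk(\Ups)-1$, and $\rk_\S A=n-b_\S(A)$ with $\rk\bfF(\S)-1$). Hence the hypercocircuits here are the cocircuits $D$ of $\bfF(\S)$ whose complement $E\setm D$ is hyperfrustrated. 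Such a $D$ lies in a single component $\G{:}X$ and is a cocircuit of $\bfF(\S{:}X)$; since a sign circuit cannot straddle two components and loose edges are matroid loops (so $D$ contains none), $E\setm D$ is hyperfrustrated if and only if $\Ups\setm X$ is hyperfrustrated or $[\Ups{:}X]\setm D$ is hyperfrustrated. The first alternative---which includes the presence anywhere of a non-neutral loose edge---is case~(i). When instead $\Ups\setm X$ is hyperbalanced, every loose edge is neutral, the condition reduces to ``$[\Ups{:}X]\setm D$ is hyperfrustrated,'' and substituting the cocircuit list (D\ref{bondb})--(D\ref{bondu-del}) of Theorem~\ref{L:sgmatroid} for the component matroid $\bfF(\S{:}X)$---only the bond type (D\ref{bondb}) when $\S{:}X$ is sign balanced, and the deletion-set and cut types (D\ref{del})--(D\ref{bondu-del}) when $\S{:}X$ is sign unbalanced---yields precisely (ii)(a) and (ii)(b).

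The part I expect to be the main obstacle is the bookkeeping in~II(B): verifying that ``$E\setm D$ is hyperfrustrated'' decomposes cleanly across the component boundary (which rests on sign circuits being connected subgraphs and on loose edges being matroid loops), that a hyperbalanced ambient $\Ups\setm X$ really does force all loose edges to be neutral, and that each of the four cocircuit types of $\bfF(\S)$ is sorted into exactly the claimed subcase. By contrast, Case~I and part~II(A) should be routine once Lemmas~\ref{L:sbasish} and~\ref{L:maxhbal} and Theorems~\ref{T:closure} and~\ref{T:closed} are in hand.
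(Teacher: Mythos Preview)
Your proposal is correct and follows essentially the same route as the paper: Case~I via $\M(\Ups)=\bfF(\S)$, part~II(A) via Lemmas~\ref{L:sbasish} and~\ref{L:maxhbal} together with Theorem~\ref{T:closure}, and part~II(B) via the rank identity $b_\S(A)=b_\S(E)+1$ and the cocircuit list of Theorem~\ref{L:sgmatroid}. Your only organizational difference is that in II(B) you invoke Theorem~\ref{T:closed} to identify the hyperfrustrated flats of $\M(\Ups)$ with those of $\bfF(\S)$ before comparing coatom conditions, whereas the paper argues the inclusion $A\subseteq A'$ directly from rank; both are equally short.
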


\begin{proof}
(I)  If $\Ups$ is hyperbalanced, its matroid $\M(\Ups)$ is the same as $\bfF(\S)$; see Theorem \ref{L:sgmatroid}.  

(II)  Now consider hyperfrustrated $\Ups$.

In part (A), it is clear that (i) implies (ii), and (ii) implies (i) by Lemma \ref{L:maxhbal}.

Parts (iii) and (iv) are equivalent by Theorem \ref{T:closure} and Lemma \ref{L:sbasish}. 

Assume (ii); then $\rk_\S(A) = \rk_\Ups(E)-1$ (by Lemma \ref{L:maxhbal}) $= \rk_\S(E)$ because $\Ups$ is hyperfrustrated.  Now we prove (iii) implies (i).  Since $F$ is hyperbalanced by Lemma \ref{L:sbasish}, $\rk_\Ups(F) = \rk_\S(F) = \rk_\S(E) = \rk_\Ups(E) - 1$, which implies that $A = \clos_\Ups(F)$ is a coatom.  By \eqref{E:closurehbal} and Lemma \ref{L:gbalclosure}, $A$ is hyperbalanced.

For part (B), suppose $A$ is a hyperfrustrated coatom of $\M(\Ups)$.  Then $\rk_\Ups A = \rk_\Ups E - 1$, which means $n-b_\S(A)+1 = n-b_\S(E)$, i.e., $b_\S(A) = b_\S(E) + 1$.  This means $\rk_\S A = \rk_\S E - 1$, so $A$ is contained in a coatom $A'$ of $\bfF(\S)$.  But $A'$ is therefore hyperfrustrated and its rank is $\rk_\Ups A' = \rk_\Ups E - 1$; by maximality of a coatom, $A=A'$.

In this case, a hypercocircuit is a sign cocircuit $D$; they are described in Theorem \ref{L:sgmatroid}.  Each $D$ is contained in a single component $\Ups{:}X$.  If $\Ups \setm X$ is hyperfrustrated, then any sign cocircuit is a hypercocircuit because deleting it does not eliminate hyperbalance.  If $\Ups \setm X$ is hyperbalanced, the sign cocircuit $D$ must be restricted to those whose deletion does not eliminate hyperbalance; hence by Theorem \ref{L:sgmatroid} we get the classification in (I)(B)(ii).
\end{proof}

The hyperbalanced coatoms of a hyperfrustrated $\Ups$, being the maximal elements of $\Latb\Ups$, are of prime importance for geometry, as we shall see in Section \ref{sec:hyperplanes}.

\sectionpage
\section{Matroid: independence and bases}\label{sec:indep}

Knowing the rank function of the matroid $\M(\Ups)$ we can determine the independent sets and bases.  They are complicated but they can be fully classified using the matroid invariant \emph{nullity}, defined as $\nul_\Ups S := |S| - \rk_\Ups S$ for a set $S$.  In particular, a set is independent if and only if its nullity is 0.
For a gain signed graph nullity has a nice expression in terms of the cyclomatic number $\xi$ and the quantity $u_\S(S)$, the number of sign-unbalanced components of $S$.  (Recall that this does not include loose edges; $u_\S(S)=0$ if $S$ has only loose edges.)

\begin{lem}\label{L:nullity}
The nullity of an edge set $S$ in $\M(\Ups)$ is 
\begin{align*}
\nul_\Ups S &= \begin{cases}
\xi(S) + b_\S(S) - c(S) 	&\text{if $S$ is hyperbalanced,} \\
\xi(S) + b_\S(S) - c(S) - 1	&\text{if $S$ is hyperfrustrated}
\end{cases} \\
&= \begin{cases}
\xi(S) - u_\S(S) 	&\text{if $S$ is hyperbalanced,} \\
\xi(S) - u_\S(S) - 1	&\text{if $S$ is hyperfrustrated}.
\end{cases}
\end{align*}
\end{lem}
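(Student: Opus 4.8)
The plan is to read both displays straight off the rank formula of Definition~\ref{D:rank} (which is a genuine matroid rank function by Theorem~\ref{T:rank}) together with the definition of the cyclomatic number; no new geometry is needed.

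First I would start from $\nul_\Ups S = |S| - \rk_\Ups S$ and substitute $\rk_\Ups S = n - b_\S(S) + \delta_\Ups(S)$, obtaining
\[
\nul_\Ups S = |S| - n + b_\S(S) - \delta_\Ups(S).
\]
Next I would rewrite $|S| - n$ via the cyclomatic number: since $\xi(S) = |S| - |V| + c(S) = |S| - n + c(S)$, we have $|S| - n = \xi(S) - c(S)$, so
\[
\nul_\Ups S = \xi(S) + b_\S(S) - c(S) - \delta_\Ups(S).
\]
Reading off the cases $\delta_\Ups(S) = 0$ ($S$ hyperbalanced) and $\delta_\Ups(S) = 1$ ($S$ hyperfrustrated) yields the first pair of formulas.

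For the second pair I would observe that each vertex component of the spanning subgraph $\S|S = (V,S)$ is either sign balanced or sign unbalanced, an isolated vertex being vacuously sign balanced, while loose edges contribute to neither count since they are not vertex components; hence $c(S) = b_\S(S) + u_\S(S)$, so $b_\S(S) - c(S) = -u_\S(S)$. Substituting into the identity above gives $\nul_\Ups S = \xi(S) - u_\S(S) - \delta_\Ups(S)$, i.e.\ the second pair. The argument is entirely routine, and there is no substantive obstacle; the only point requiring care is the bookkeeping of isolated vertices and loose edges across the quantities $n$, $c(S)$, $b_\S(S)$, $u_\S(S)$, and $\xi(S)$ — in particular that loose edges count toward $|S|$ and $\xi(S)$ but not toward any vertex-component count, while isolated vertices count toward both $b_\S(S)$ and $c(S)$. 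These are precisely the conventions under which the frame-matroid rank $n - b_\S(S)$ and the cyclomatic number were defined, so everything is consistent.
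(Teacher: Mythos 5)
Your proposal is correct and is exactly the paper's argument: the paper's proof simply cites the definition of nullity, the rank formula, and the definition of the cyclomatic number, and your computation spells out precisely that substitution together with the bookkeeping identity $c(S)=b_\S(S)+u_\S(S)$. No issues.
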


\begin{proof}
The formula follows from the definition of nullity, the rank formula \eqref{E:rank}, and the definition of the cyclomatic number.
\end{proof}

The second version of the nullity formula shows that isolated vertices can be ignored in computing nullity.

We prepare for independence with a lemma.

\begin{lem}\label{L:2hfrust}
An edge set that contains two hyperfrustrated edge components is dependent.
\end{lem}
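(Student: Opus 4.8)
The plan is to use the nullity formula of Lemma~\ref{L:nullity} to show that an edge set $S$ containing two hyperfrustrated edge components has $\nul_\Ups S \geq 1$, which by definition forces $S$ to be dependent. The key point is that hyperfrustration is a property localized to a single edge component, so if $S$ has two hyperfrustrated edge components $S_1, S_2$, each of them individually has $\delta_\Ups(S_i)=1$, but the whole set $S$ only gets a single $-1$ correction in the rank formula.

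First I would recall that rank is additive over edge components: if $S = S_1 \sqcup S_2 \sqcup \cdots \sqcup S_k$ is the decomposition of $S$ into its edge components, then $\z(S)$ is a direct sum of the spans of the $\z(S_i)$ in the vertex coordinates $x_1,\dots,x_n$, but they all share the single gain coordinate $x_0$; this is exactly why $\delta$ is not additive. Concretely, from Definition~\ref{D:rank}, $\rk_\Ups(S) = n - b_\S(S) + \delta_\Ups(S)$ with $\delta_\Ups(S) = 1$ since $S$ is hyperfrustrated (it contains a non-neutral sign circuit, namely one inside $S_1$). Meanwhile $|S| = \sum_i |S_i|$, $b_\S(S) = \sum_i b_\S(S_i)$, $c(S) = \sum_i c(S_i)$, and $\xi(S) = \sum_i \xi(S_i)$, so by the first form of Lemma~\ref{L:nullity},
\[
\nul_\Ups S = \xi(S) + b_\S(S) - c(S) - 1 = \sum_i \bigl[\xi(S_i) + b_\S(S_i) - c(S_i)\bigr] - 1.
\]
Now for each hyperfrustrated component $S_i$, applying Lemma~\ref{L:nullity} to $S_i$ alone (which is a legitimate edge set) gives $\nul_\Ups S_i = \xi(S_i) + b_\S(S_i) - c(S_i) - 1 \geq 0$, hence $\xi(S_i) + b_\S(S_i) - c(S_i) \geq 1$; and for each hyperbalanced component this quantity is $\nul_\Ups S_j \geq 0$. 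Summing over the (at least two) hyperfrustrated components and the rest, $\sum_i [\xi(S_i)+b_\S(S_i)-c(S_i)] \geq 2$, so $\nul_\Ups S \geq 2 - 1 = 1 > 0$, and therefore $S$ is dependent.

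There is no serious obstacle here; the only thing to be careful about is the bookkeeping asserting that $\nul_\Ups$ restricted to a single hyperfrustrated edge component is $\geq 0$ (so that the bracketed quantity is $\geq 1$ for each such component), which is immediate since nullity of any edge set is nonnegative. I would also note in passing that a hyperfrustrated edge set is automatically nonempty and in fact contains a non-neutral sign circuit, hence $S_i$ cannot be a single isolated vertex—so treating it as an edge component with $\xi(S_i)+b_\S(S_i)-c(S_i) = \nul_\Ups S_i + 1 \geq 1$ is unproblematic. One could alternatively phrase the whole argument via the second form of the nullity formula using $u_\S$, but the component-additivity of $\xi, b_\S, c$ makes the first form cleanest.
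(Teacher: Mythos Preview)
Your proof is correct and follows essentially the same strategy as the paper: apply Lemma~\ref{L:nullity}, exploit additivity over edge components, and bound the contribution of each hyperfrustrated component below by $1$. One small correction: the individual claims $b_\S(S)=\sum_i b_\S(S_i)$ and $c(S)=\sum_i c(S_i)$ are not literally true (both counts include the isolated vertices of the spanning subgraph, which get over-counted in the sum), but the combination $\xi+b_\S-c=\xi-u_\S$ \emph{is} additive, and that is all your argument uses. The paper avoids this bookkeeping by working with the second form $\xi-u_\S$ from the outset and showing $\xi(S_i)-u_\S(S_i)>0$ for a hyperfrustrated component by direct inspection (it must contain a positive circle, a handcuff, or a loose edge); your appeal to nonnegativity of nullity is a clean alternative to that case check.
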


\begin{proof}
Say $S_1$ and $S_2$ are hyperfrustrated edge components of $S$.  Then 
$$
\nul_\Ups S = \xi(S) - u_\S(S) - 1 \geq [\xi(S_1) - u_\S(S_1)] + [\xi(S_2) - u_\S(S_2)] - 1
$$
since $\xi$ and $u_\S$ are additive on components.  
In order to be hyperfrustrated a component $S_i$ must contain a positive circle, a handcuff with two negative figures, or a loose edge.  In either case, $\xi(S_i) - u_\S(S_i) > 0$.  It follows that $\nul_\Ups S > 0$.
\end{proof}

A \emph{unicycle} is a circle or a half edge that may have trees attached.

\begin{thm}\label{T:indep}
The independent sets of $\M(\Ups)$ are the following types of edge set, considered as spanning subgraphs.
\begin{enumerate}[{\rm I.}]
\item Hyperbalanced:  Every component is a tree or a sign-unbalanced unicycle, and it has no loose edges.
\item Hyperfrustrated:  Every component is a tree or a sign-unbalanced unicycle, except that there may be one component $S_0$ that is either a sign-balanced unicycle, a loose edge, or a sign-unbalanced theta graph or a handcuff with at least one negative figure, possibly with attached trees, and in each case the unique sign circuit in $S_0$ is non-neutral.
\end{enumerate}
\end{thm}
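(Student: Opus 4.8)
The plan is to read the independent sets off the nullity formula of Lemma~\ref{L:nullity}, using that a set is independent iff its nullity is $0$, that $\xi$ and $u_\S$ are additive over edge components, and that by Lemma~\ref{L:2hfrust} an independent set has at most one hyperfrustrated edge component. Writing $S_1,\dots,S_k$ for the edge components of $S$ and noting that $S$ is hyperfrustrated iff some $S_i$ is,
\[
\nul_\Ups S \;=\; \sum_{i=1}^{k}\bigl(\xi(S_i)-u_\S(S_i)\bigr)\;-\;\begin{cases}1,&\text{if some }S_i\text{ is hyperfrustrated,}\\0,&\text{otherwise.}\end{cases}
\]
A hyperbalanced component contributes $\xi(S_i)-u_\S(S_i)\ge0$, and, exactly as in the proof of Lemma~\ref{L:2hfrust}, a hyperfrustrated component contributes $\xi(S_i)-u_\S(S_i)\ge1$. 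Hence $\nul_\Ups S=0$ forces every hyperbalanced component to have $\xi-u_\S=0$ and, if $S$ is hyperfrustrated, its sole hyperfrustrated component $S_0$ to have $\xi(S_0)-u_\S(S_0)=1$; conversely, any $S$ meeting these numerical conditions has $\nul_\Ups S=0$. So the task reduces to turning these conditions into the graph structures in the statement.

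Next I would carry out that translation. A connected edge component has $u_\S\in\{0,1\}$. A hyperbalanced component $C$ with $\xi(C)-u_\S(C)=0$ is a tree if $C$ is sign balanced (so $\xi(C)=0$) and a unicycle whose cycle part is a negative circle or a half edge if $C$ is sign unbalanced (so $\xi(C)=1$); one checks directly that such a sign-unbalanced unicycle contains no sign circuit, hence is automatically hyperbalanced, whereas a loose-edge component ($\xi=1$, $u_\S=0$) is excluded. This produces type~I. For the hyperfrustrated component $S_0$, $\xi(S_0)-u_\S(S_0)=1$ gives two cases: if $S_0$ is sign balanced then $\xi(S_0)=1$, so up to attached trees $S_0$ is a positive circle or a single loose edge; if $S_0$ is sign unbalanced then $\xi(S_0)=2$, so up to attached trees $S_0$ is a theta graph or a handcuff, in the broad sense that permits a constituent figure to be a half edge, and it must carry at least one negative figure since it is sign unbalanced. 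Attached trees affect neither $\xi$, nor $u_\S$, nor the set of sign circuits.

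The main obstacle is to decide exactly when such an $S_0$ is hyperfrustrated, and for this I would show that every one of these reduced structures carries a \emph{unique} sign circuit, so that ``$S_0$ is hyperfrustrated'' is equivalent to ``the unique sign circuit of $S_0$ is non-neutral.'' A sign-balanced unicycle has only its positive circle, and a loose edge is itself a sign circuit; the $\xi=2$ sign-unbalanced structures I would treat case by case. In a theta graph the product of the three circle signs is $+1$, so exactly one circle is positive while the two negative ones overlap in a path of positive length; thus there is neither a second positive circle nor a handcuff of negative figures, and the positive circle is the only sign circuit. In a handcuff the two constituent figures share at most one vertex, so when both are negative figures they form the unique handcuff sign circuit, and otherwise the single positive constituent circle is the only sign circuit. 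Assembling this with the component analysis yields exactly the families I and II; the converse inclusions follow from the displayed nullity formula together with the facts, noted above, that trees and sign-unbalanced unicycles are hyperbalanced.
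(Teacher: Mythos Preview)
Your proof is correct and follows essentially the same approach as the paper: both reduce independence to the equation $\nul_\Ups S=0$ via Lemma~\ref{L:nullity}, split into edge components, invoke Lemma~\ref{L:2hfrust} to isolate at most one hyperfrustrated component $S_0$, and then translate the per-component conditions $\xi-u_\S=0$ (hyperbalanced) and $\xi-u_\S=1$ (the single $S_0$) into the listed graph structures. Your treatment of the uniqueness of the sign circuit in $S_0$ is somewhat more explicit than the paper's (which instead argues by writing $S_0$ as a hyperbalanced independent set plus one edge), but the substance is the same.
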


\begin{figure}[htbp]
\includegraphics[scale=.6]{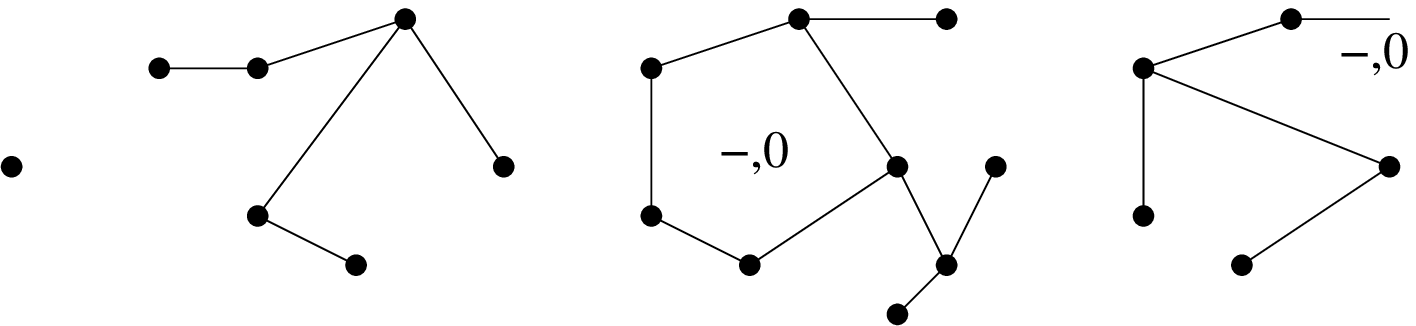}
\\[12pt]
\includegraphics[scale=.6]{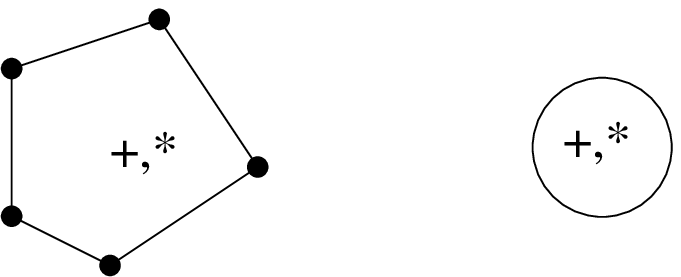}
\\[18pt]
\includegraphics[scale=.8]{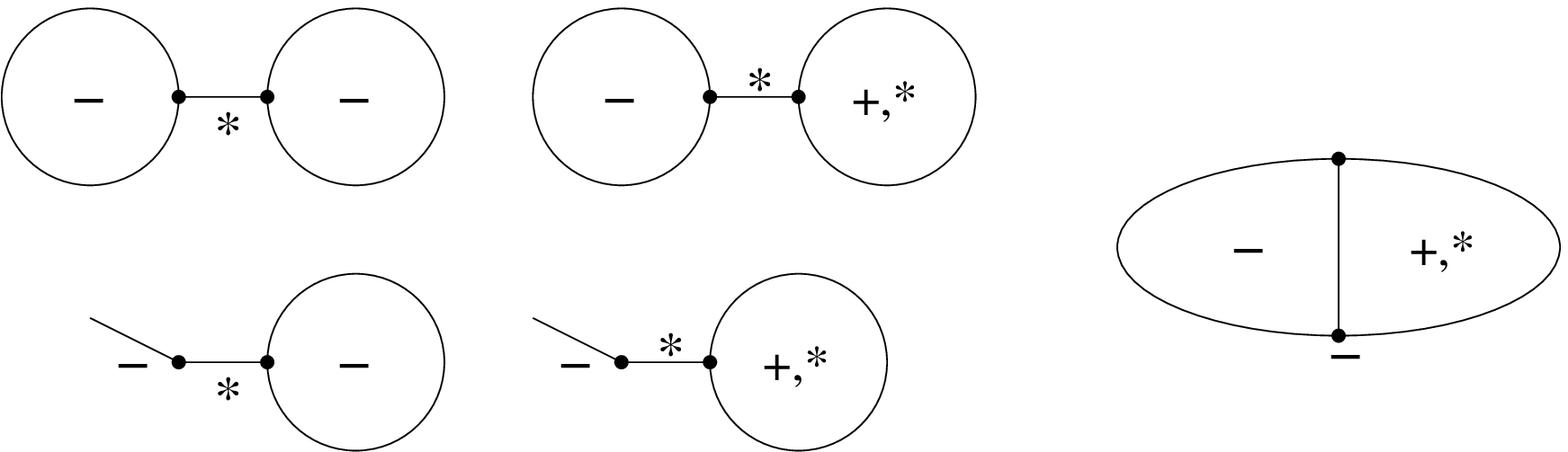}
\caption{Independent sets.  There may be any number of hyperbalanced components.  There may be any number of trees pendant from a component, not shown except in the top row.  Other than that, the figure shows all types of independent set.  Circle signs and gains ($*$ means the gain is not $0$) are indicated.  
Top row:  Possible hyperbalanced components: two trees and two neutral negative unicycles.  
Lower rows:  The possible hyperfrustrated component $S_0$.  Second row: sign balanced.  Bottom rows:  sign unbalanced.}
\label{F:indep}
\end{figure}

\begin{proof}
To find independent sets we assume $S$ is independent and examine how $\nul_\Ups S$ can equal 0.  
The equation we have to solve is $\xi(S) - u_\S(S) = 0$ or $1$ depending on whether $S$ is or is not hyperbalanced.  By independence, any positive circle or loose edge must be non-neutral.

In the hyperbalanced case $\xi(S) = u_\S(S)$ and $S$ must be independent in $\bfF(\Sigma)$.   Therefore, there can be no positive circles or loose edges in $S$.  Every sign-balanced component must be acyclic.  Each sign-unbalanced component must have no balanced circles and exactly one negative figure, so it is a unicycle.  This proves part (I).

In the hyperfrustrated case the components are the same except that, because $\xi(S) = u_\S(S) + 1$, there is one special component $S_0$ that contains a sign circuit, which must be non-neutral since $S$ is independent; thus $S_0$ is hyperfrustrated.  (By Lemma \ref{L:2hfrust} there cannot be two such components.)  
If $S_0$ is sign balanced, it has one circle, which is positive (and is not neutral), so it is a unicycle, or else it is a non-neutral loose edge.  

If $S_0$ is sign unbalanced, it has cyclomatic number 2 and at least one negative figure.  Thus it is either an unbalanced theta graph or an unbalanced handcuff, with possible pendant trees.  In either case it contains exactly one sign circuit, which must not be neutral.  Thus there is an edge $e \in S_0$ (in the sign circuit) such that $S \setm e$ is hyperbalanced.  That is, $S$ is obtained by adding $e$ to a hyperbalanced independent set $I$.
If $e$ is added to a tree component of $I$, $S_0$ is a unicycle.  In order to contain a sign circuit it must be a sign-balanced unicycle.  If $e$ is an isthmus in $S$, it joins two components of $I$, neither of which can be a tree because then $S_0$ would be hyperbalanced; thus the components must be sign-unbalanced unicycles and $S_0$ contains a handcuff sign circuit.  The third possibility is that $e$ is added to a unicyclic component of $I$ to form $S_0$ with $\xi(S_0) = 2$.  A component with cyclomatic number 2 must be, aside from any pendant trees, a theta graph or a handcuff.  Since it is sign unbalanced, the theta graph must have two negative and one positive circle and the handcuff must have one or two negative figures.  

In every case, $S_0$ contains a sign circuit that is the unique sign circuit in $S$, so it must be non-neutral to make $S$ hyperfrustrated.
\end{proof}

\begin{cor}\label{C:swrep}
Sign and gain switching and edge reorientation in $\Ups$ do not change the matroid $\M(\Ups)$ and therefore do not change linearly dependent subsets among the vectors $\z(E)$.
\end{cor}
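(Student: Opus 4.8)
The plan is to show that the rank function $\rk_\Ups$ of Definition \ref{D:rank} is \emph{literally the same function} before and after each of the three operations; since a matroid is determined by its rank function, $\M(\Ups)$ (and likewise $\M_\infty(\Ups)$) is then unchanged, and the assertion about dependent subsets of $\z(E)$ follows from Theorem \ref{T:rank}. Recall $\rk_\Ups(S) = n - b_\S(S) + \delta_\Ups(S)$, so it suffices to check that each of the two nonconstant summands is invariant.

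First I would dispose of $b_\S(S)$, which counts the sign-balanced components of $\S|S$, i.e., the components containing no negative circle and no half edge. Gain switching and reorientation do not alter the signed graph $\S$ at all (gain switching has no effect on gains-versus-signs by fiat, and reorientation negates $\tau$ and inverts $\phi$ on an edge but leaves $\s$ fixed), so $b_\S$ is trivially invariant under those two. Sign switching changes $\s$ but, as recorded in Section \ref{sec:sg}, does not change the sign of any circle, and it leaves the signs of half and loose edges alone; hence it carries sign-balanced components to sign-balanced components, and $b_\S(S)$ is invariant. Next I would handle $\delta_\Ups(S) \in \{0,1\}$, which records whether $S$ is hyperbalanced or hyperfrustrated: Theorem \ref{P:swhyperbal} says exactly that sign and gain switching preserve hyperbalance and hyperfrustration of every edge set, and the remark following Definition \ref{D:neutral} (via Lemmas \ref{L:swwalkgain} and \ref{L:revwalkgain}) says neutrality of a sign circuit, hence hyperbalance of an edge set, does not depend on the orientation. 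Since $e_\infty$ carries no sign or gain and is unaffected by any of the operations (and behaves throughout as a fixed non-neutral loose edge), the identity $\rk_\Ups = \rk_\Ups$ holds on all of $E_\infty$, so $\M(\Ups)$ and $\M_\infty(\Ups)$ are unchanged.

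For the ``therefore'' clause I would simply invoke Theorem \ref{T:rank}: for \emph{every} gain signed graph the map $\z$ is a vector representation of its matroid, so the linearly dependent subsets of $\z(E)$ are precisely the dependent sets of $\M(\Ups)$. Applying this to $\Ups$ and to its switched or reoriented version, and using that the two matroids coincide, shows that the collections of dependent subsets agree. (Alternatively, one could argue directly from Lemma \ref{L:vecprops}: sign switching multiplies all vectors by the invertible matrix $D_\zeta$, reorientation rescales individual vectors by $\pm1$, and gain switching is an invertible linear map fixing $\e_1,\dots,\e_n$ and adjusting only the $\e_0$-component; but routing through the rank function is cleaner and avoids verifying that the gain-switching transformation is the claimed linear map.) I do not expect a genuine obstacle here: the only thing requiring care is pairing each summand of the rank formula with the correct earlier invariance statement, and noting that of the three operations only sign switching actually touches the data on which $b_\S$ depends.
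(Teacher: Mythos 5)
Your proof is correct, and it takes a slightly different route from the paper's. The paper argues via the characterization of independent sets (Theorem \ref{T:indep}): a set is independent iff a condition on its underlying graph, its circle signs, and the neutrality of its sign circuits holds, and none of these is affected by switching or reorientation (the only non-obvious case, neutrality under gain switching, being Lemma \ref{L:swgainswalk}). You instead work directly with the rank formula of Definition \ref{D:rank}, showing each summand $b_\S(S)$ and $\delta_\Ups(S)$ is invariant, citing the invariance of circle signs under sign switching for the former and Theorem \ref{P:swhyperbal} together with the orientation-independence of neutrality (the remark after Definition \ref{D:neutral}) for the latter. The two arguments rest on the same underlying facts, but yours is marginally more economical: it needs only the definition of the rank function plus the earlier invariance statements, and does not route through the structural description of independent sets; it also handles $\M_\infty(\Ups)$ explicitly, which the paper leaves implicit. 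Your treatment of the ``therefore'' clause via Theorem \ref{T:rank} matches the paper's exactly, and your parenthetical alternative via Lemma \ref{L:vecprops} is a legitimate direct verification that the paper does not spell out.
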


\begin{proof}
The matroid is determined by its independent sets.  An independent set is characterized by its underlying graph, by the signs of its circles, and by the neutrality of its sign circuits.  None of these is affected by switching or orientation.  The only one of those assertions that is not obvious is the one about neutrality, but Lemma \ref{L:swgainswalk} says the gain of a sign circuit is not changed by gain switching.

The dependent vector sets are determined by $\M(\Ups)$, according to Theorem \ref{T:rank}.
\end{proof}

We can characterize the maximal independent sets.

\begin{thm}\label{T:bases}
The bases of $\M(\Ups)$ are the following types of edge set $B$, considered as spanning subgraphs.
\begin{enumerate}[{\rm I.}]
\item If\/ $\Ups$ is hyperbalanced, then in a sign-balanced component $\Ups_i$ of $\Ups$, $B \cap E(\Ups_i)$ is a spanning tree; and in a sign-unbalanced component $\Ups_i$, $B \cap E(\Ups_i)$ is a spanning disjoint union of sign-unbalanced unicycles.  $B$ contains no loose edges.
\item If\/ $\Ups$ is hyperfrustrated, $B$ is the same except that either $B$ has one edge component that is a non-neutral loose edge; or in one sign-balanced component $\Ups_i$ that is not a tree, $B \cap E(\Ups_i)$ is a spanning sign-balanced unicycle whose circle is not neutral; or in one sign-unbalanced component $\Ups_i$, one component $B_0$ of $B \cap E(\Ups_i)$ is a theta graph or handcuff with at least one negative figure, possibly with attached trees, and the unique sign circuit in $B_0$ is non-neutral.
\end{enumerate}
\end{thm}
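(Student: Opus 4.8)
The plan is to obtain the bases as the maximal independent sets and read off their shape from the classification of independent sets in Theorem~\ref{T:indep}, sharpened by a size count. By Definition~\ref{D:rank} and Theorem~\ref{T:rank}, $\rk(\Ups)=n-b_\S(E)+\delta_\Ups(E)$, while for an independent set $B$ one has $|B|=\rk_\Ups(B)=n-b_\S(B)+\delta_\Ups(B)$. The first step is to record the two monotonicities $b_\S(B)\ge b_\S(E)$ (restricting $\Ups$ to $B$ can only split a sign-balanced component of $\Ups$ into more sign-balanced pieces, and an uncovered vertex is itself a sign-balanced component) and $\delta_\Ups(B)\le\delta_\Ups(E)$ (a non-neutral sign circuit in $B$ is one in $E$). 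Hence an independent set $B$ is a basis \emph{iff} $b_\S(B)=b_\S(E)$ and $\delta_\Ups(B)=\delta_\Ups(E)$; equivalently, $B$ keeps every sign-balanced component of $\Ups$ connected and vertex-spanning, creates no new sign-balanced component inside a sign-unbalanced component of $\Ups$, and is hyperfrustrated exactly when $\Ups$ is.

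Next I would feed these two equalities into Theorem~\ref{T:indep}. If $\Ups$ is hyperbalanced, then $\delta_\Ups(B)=0$ automatically and $B$ has type~I: its components are trees and sign-unbalanced unicycles, with no loose edges. The equality $b_\S(B)=b_\S(E)$ then forces $B$ to be connected and vertex-spanning on each sign-balanced component of $\Ups$---hence a spanning tree there---and to have no tree component and no isolated vertex inside a sign-unbalanced component of $\Ups$---hence a spanning disjoint union of sign-unbalanced unicycles there. This is Part~I, and the converse is the same count: such a $B$ is independent of type~I, and the bijection between its tree components and the sign-balanced components of $\Ups$ (an isolated vertex being a tree on one vertex, with $0=n_i-1$ edges) gives $b_\S(B)=b_\S(E)$, so $B$ is a basis. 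Lemma~\ref{L:sbasish} fits in here too, in this case merely recording that $\M(\Ups)=\bfF(\S)$.

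If $\Ups$ is hyperfrustrated, then $\delta_\Ups(B)=1$, so $B$ has type~II: everything as before except for one hyperfrustrated component $S_0$. Away from $S_0$, the Part~I argument applies verbatim and $B$ restricts to a frame-matroid basis of each remaining component of $\Ups$. For $S_0$ I would split on where it sits in $\Ups$. If $S_0$ is a loose edge, it is its own component of $\Ups$ and is a non-neutral loose edge: case~(a). If $S_0$ lies in a sign-balanced component $\Ups_i$, then $b_\S(B)=b_\S(E)$ forces $B\cap E(\Ups_i)$ to be a single sign-balanced component, so by Theorem~\ref{T:indep} it is a vertex-spanning sign-balanced unicycle whose circle is non-neutral, and $\Ups_i$ is not a tree: case~(b). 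If $S_0$ lies in a sign-unbalanced component $\Ups_i$, the same count forbids any sign-balanced subcomponent, so $B\cap E(\Ups_i)$ is a vertex-spanning disjoint union of sign-unbalanced unicycles together with one component, namely $S_0$, which by Theorem~\ref{T:indep} is a sign-unbalanced theta graph or handcuff with at least one negative figure and attached trees, whose unique sign circuit is non-neutral: case~(c). The converse is again a bookkeeping check: each listed $B$ is independent of type~II, and observing that a theta graph or a handcuff with a negative figure is sign-unbalanced while a sign-balanced unicycle contributes exactly one sign-balanced component yields $b_\S(B)=b_\S(E)$ and $\delta_\Ups(B)=1$, so $B$ is a basis.

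The step I expect to be the main obstacle is precisely this dictionary between the graph-theoretic description of the special component $S_0$ from Theorem~\ref{T:indep} and the sign-balance bookkeeping $b_\S(B)=b_\S(E)$: one must check carefully that ``vertex-spanning'' is genuinely enforced on every component of $\Ups$ (a stray isolated vertex would be an extra sign-balanced component and wreck the count), that the hypothesis ``at least one negative figure'' in case~(c) is exactly what makes $S_0$ sign-unbalanced and forces the uniqueness of its sign circuit, and that half-edge negative figures are treated on the same footing as negative circles throughout the case analysis. Once the reduction to the two equalities $b_\S(B)=b_\S(E)$ and $\delta_\Ups(B)=\delta_\Ups(E)$ is in hand, everything else is routine.
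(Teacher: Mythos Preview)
Your proposal is correct and follows essentially the same approach as the paper: deduce the bases as the maximal independent sets of Theorem~\ref{T:indep}, checking maximality component by component. Your explicit reduction to the two equalities $b_\S(B)=b_\S(E)$ and $\delta_\Ups(B)=\delta_\Ups(E)$ (which is just $\rk_\S(B)=\rk_\S(E)$ together with $\delta_\Ups(B)=\delta_\Ups(E)$) is a clean way to organize the maximality check that the paper leaves implicit in its terse one-paragraph proof, but the underlying argument is the same.
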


\begin{proof}
For an independent set $B$ to be maximal, in each sign-balanced component of $\Ups$ it must be a spanning tree or a spanning sign-balanced unicycle.  Also, in each sign-unbalanced component $\Ups_i$ of $\Ups$, $B$ cannot have any tree components so every component of $B \cap E(\Ups_i)$ is either a sign-unbalanced unicycle, or a graph of cyclomatic number 2 as described in Theorem \ref{T:indep}.  Furthermore, $B$ can have only one component that is not a tree or a sign-unbalanced unicycle, and none if $\Ups$ is hyperbalanced or if $B$ contains a non-neutral loose edge.
\end{proof}

\sectionpage
\section{Matroid: circuits}\label{sec:circuit}

Knowing the independent sets, it is finally time to find the circuits.  
A set of elements of a matroid is a circuit if and only if it has nullity 1 but every proper subset has nullity 0.
As usual, for $\M_\infty$ we can treat $e_\infty$ as if it were a non-neutral loose edge.

\begin{thm}[Hypercircuits]\label{T:hypercirc}
The hypercircuits are the following types of edge set.
\begin{enumerate}[{\rm I.}]
\item Hyperbalanced:  A neutral sign circuit, as in Figure \ref{F:hcirc-neutral}.
\item Hyperfrustrated; in every type, all sign circuits are to be non-neutral:
	\begin{enumerate}[{\rm a.}]
	\item Disconnected:  The union of two disjoint, non-neutral sign circuits, as in Figure \ref{F:hcirc-disconnected}.
	\item Sign balanced:  A theta graph or tight handcuff, as in Figure \ref{F:hcirc-balanced}.
	\item A subdivision of a sign-antibalanced $K_4$ in which all three subdivided quadrilaterals are non-neutral, as in Figure \ref{F:hcirc-fat-k4}(b).
	\item A quadruple path as in Figure \ref{F:hcirc-fat-k4}(a).
	\item A linked theta and circle as in Figure \ref{F:hcirc-thetacircle}.
	\item Three linked circles as in Figure \ref{F:hcirc-polycircles}.
	\end{enumerate}
\end{enumerate}
\end{thm}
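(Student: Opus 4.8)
The plan is to combine the description of independent sets in Theorem~\ref{T:indep} with the criterion that a circuit of a matroid is a set of nullity $1$ every proper subset of which has nullity $0$, tracking nullity through Lemma~\ref{L:nullity}, $\nul_\Ups S = \xi(S) - u_\S(S) - \delta_\Ups(S)$. Throughout, a hypercircuit $S$ is regarded as a spanning subgraph of $(V(S),S)$, so isolated vertices play no role, and since $e_\infty$ behaves in $\M_\infty(\Ups)$ exactly like a non-neutral loose edge it suffices to treat $\M(\Ups)$ while allowing one extra non-neutral loose edge. The hyperbalanced case is immediate: if $S$ is a hyperbalanced hypercircuit, then $\rk_\Ups$ agrees with $\rk_\S$ on subsets of $S$, so $S$ is dependent in $\bfF(\S)$ and contains a sign circuit $C$; hyperbalance makes $C$ neutral, hence dependent in $\M(\Ups)$ by Corollary~\ref{C:neutralcircuit}, so minimality forces $S=C$. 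Conversely a neutral sign circuit is dependent by Corollary~\ref{C:neutralcircuit}, while each proper subset is a proper subset of a sign circuit, hence independent in $\bfF(\S)$ and therefore in $\M(\Ups)$. That is Case~I.

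For the hyperfrustrated case, let $S$ be a hyperfrustrated hypercircuit, so $\xi(S)-u_\S(S)=2$. First I would record the constraints forced by minimality. (i) \emph{Every sign circuit in $S$ is non-neutral}: a neutral one would, by Corollary~\ref{C:neutralcircuit}, be a dependent proper subset unless it equals $S$, contradicting hyperfrustration. (ii) \emph{$S$ has no prunable edge} --- no edge $e$ whose deletion leaves both $\xi-u_\S$ and the hyperbalanced/hyperfrustrated status of the affected components unchanged; in particular $S$ has no attached trees and no bare negative circle, negative loop, or half edge outside a handcuff. (iii) \emph{$S$ has one or two edge components}: by Lemma~\ref{L:2hfrust} at most one is hyperfrustrated, while a hyperbalanced component has $\xi-u_\S\ge0$ and can have neither positive excess (it would be a dependent proper subset) nor zero excess (it would be a prunable tree or sign-unbalanced unicycle); so there are at most two, both hyperfrustrated, and if there are two then each is a connected hyperfrustrated graph with $\xi-u_\S=1$. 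Such a component must in fact be a non-neutral sign circuit: if it were instead a positive circle carrying extra structure, a theta graph, a figure-eight containing a positive circle, or a positive circle attached to a negative figure, then deleting a well-chosen edge would produce a non-neutral positive circle with a tail, which together with the other component is a dependent proper subset of $S$. This is the decisive point --- the ``two disjoint non-neutral sign circuits'' phenomenon eliminates everything but genuine sign circuits --- and it yields Case~II(a).

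Now suppose $S$ is connected. If $S$ is sign balanced then $u_\S(S)=0$, so $\xi(S)=2$, and (ii) gives minimum degree $\ge2$ (there are no half edges), so $S$ is topologically a theta graph or a tight handcuff of two positive circles; minimality forces every circle to be non-neutral (a neutral circle would be a dependent proper subset), and conversely each such configuration is checked to be a circuit, giving Case~II(b). If $S$ is connected and sign unbalanced, then $u_\S(S)=1$ and $\xi(S)=3$, and this is the substantial case. I would suppress the degree-$2$ vertices of $S$ to obtain its topological core, a connected graph-with-half-edges of cyclomatic number $3$ and minimum degree $\ge3$ (a half edge counting like a half-loop); the inequality $2|E|\ge3|V|$ together with $|E|=|V|+2$ bounds the core to at most four vertices, so there are only finitely many cores --- essentially $K_4$, two vertices joined by four parallel edges, a theta graph linked to a circle, three linked circles, and their half-edge analogues. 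For each core I would determine which signings make it sign unbalanced with every sign circuit non-neutral, and among those which survive the minimality test $\nul_\Ups(S\setm e)=0$ for all $e\in S$, using Lemma~\ref{L:nullity} and Theorem~\ref{T:indep}; again a core is discarded precisely when some edge deletion exposes two disjoint non-neutral sign circuits, or a non-neutral sign circuit disjoint from a hyperfrustrated remnant. The survivors are the subdivided sign-antibalanced $K_4$ with all three quadrilaterals non-neutral, the quadruple path (two vertices joined by four internally disjoint paths, suitably signed), the linked theta and circle, and the three linked circles, i.e.\ Cases~II(c)--(f). I expect this core enumeration to be the main obstacle: there are several topological types, and for each one must do both the forward analysis --- which signings and neutrality patterns can occur --- and the verification that no proper subset is dependent, where the bookkeeping is heaviest.

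Finally, for the converse I would check each listed type directly: its nullity is $1$ by Lemma~\ref{L:nullity}, and deleting any single edge produces a subgraph on the independent-set list of Theorem~\ref{T:indep}. For type~II(c) this uses that in any subdivision of a sign-antibalanced $K_4$ the only sign circuits are the three quadrilaterals --- the triangles are negative and no two of them meet in at most one vertex, so there are no handcuffs --- so any single-edge deletion turns $S$ into a sign-unbalanced theta with attached trees whose unique sign circuit is one of the non-neutral quadrilaterals, hence independent; types~II(a),(b),(d),(e),(f) are handled by the same nullity arithmetic together with the explicit list of sign circuits present in each.
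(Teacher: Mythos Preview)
Your plan is essentially the same as the paper's: both arguments run on the nullity formula $\nul_\Ups S=\xi(S)-u_\S(S)-\delta_\Ups(S)$, dispose of the hyperbalanced case by identifying $\M(\Ups|S)$ with $\bfF(\S|S)$, and then analyse a hyperfrustrated circuit $F$ via $\xi(F)=u_\S(F)+2$, first bounding the number of edge components and then doing a case analysis in the connected case.

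Two points of comparison. First, your invocation of Lemma~\ref{L:2hfrust} in step~(iii) is misstated: that lemma does \emph{not} say a hypercircuit has at most one hyperfrustrated component; it says any set with two such components is dependent. The correct deduction (which the paper makes, and which your conclusion ``both hyperfrustrated'' already presupposes) is that a component with $\xi-u_\S=0$ is prunable, so every component has $\xi(F_i)>u_\S(F_i)$; since these excesses sum to $2$, there are at most two components, and each contains a non-neutral sign circuit $C_i$. Then Lemma~\ref{L:2hfrust} applied to $C_1\cup C_2$ gives dependence of that proper-or-equal subset, forcing $F=C_1\cup C_2$ directly---there is no need for your longer ``delete a well-chosen edge'' argument.

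Second, in the connected sign-unbalanced case your organising device (pass to the topological core with $\xi=3$, min degree $\ge3$, hence $|V|\le4$, and enumerate) differs from the paper's. The paper instead writes $F=I\cup A$ where $A$ is an ``$e$-ear'' (the maximal path or circle through a chosen edge $e$ whose interior vertices are divalent) and $I=F\setminus A$ is a connected independent set from the list in Theorem~\ref{T:indep}; it then branches on whether $I$ is a unicycle, a theta graph with one positive circle, or a handcuff, using along the way the key observation (your ``decisive point'') that a hypercircuit cannot properly contain a sign-balanced theta or two positive circles meeting in at most one vertex. Both routes work, but the paper's buys a cleaner interface with Theorem~\ref{T:indep} and avoids the bookkeeping of enumerating multigraph cores with half edges, which you flag as the main obstacle; conversely, your core enumeration makes the topological types visible up front and may feel more systematic if one is comfortable with the small-graph census.
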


\begin{figure}[hb]
\includegraphics[scale=.7]{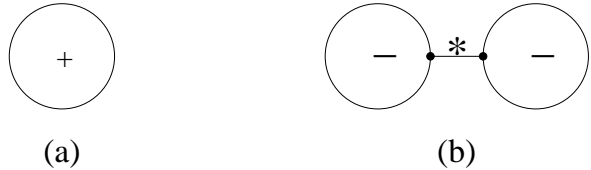}
\caption{The hyperbalanced hypercircuits.  
 In all these figures the asterisk $*$ denotes a path of length $\geq0$.  All other paths, including (closed paths), have length $\geq1$.  An isolated, positive circle may be a loose edge.  A negative circle with one indicated vertex may be a half edge.}
\label{F:hcirc-neutral}
\end{figure}

\begin{figure}[hb]
\includegraphics[scale=.7]{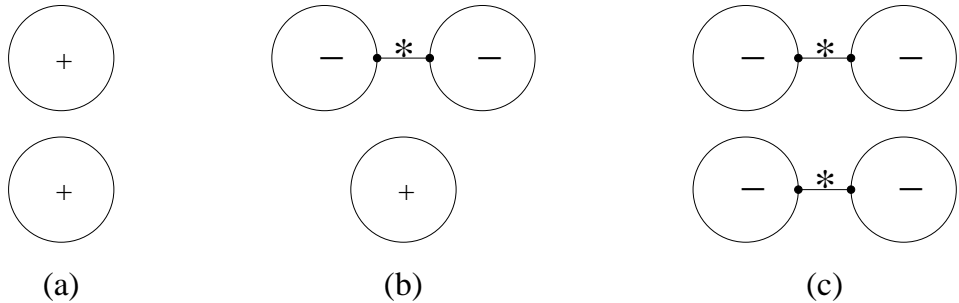}
\caption{From left to right, the three types of disconnected hypercircuit.  The sign circuits are non-neutral.}
\label{F:hcirc-disconnected}
\end{figure}

\begin{figure}[hb]
\includegraphics[scale=.7]{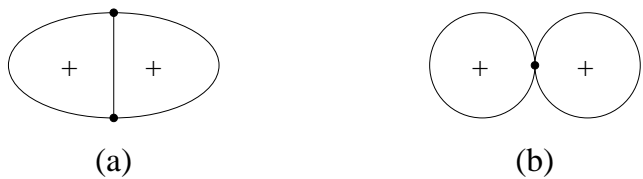}
\caption{The two types of sign-balanced hypercircuit.  All positive circles are non-neutral.}
\label{F:hcirc-balanced}
\end{figure}

\begin{figure}[hb]
\includegraphics[scale=.7]{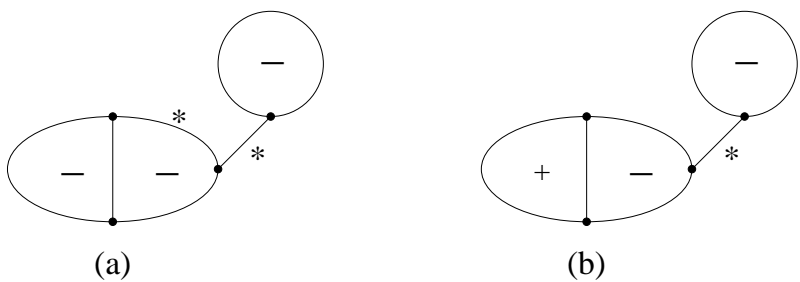}
\caption{The two types of linked theta-circle hypercircuit.  Every contained sign circuit is non-neutral.}
\label{F:hcirc-thetacircle}
\end{figure}

\begin{figure}[hb]
\includegraphics[scale=.7]{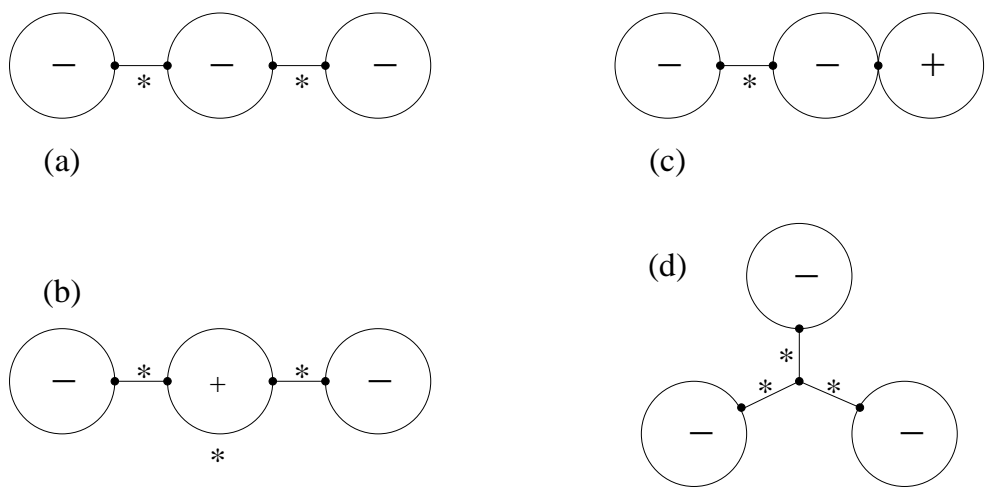}
\caption{The four kinds of linked-circle hypercircuit.  Each contained sign circuit is non-neutral.}
\label{F:hcirc-polycircles}
\end{figure}

\begin{figure}[hb]
\includegraphics[scale=.7]{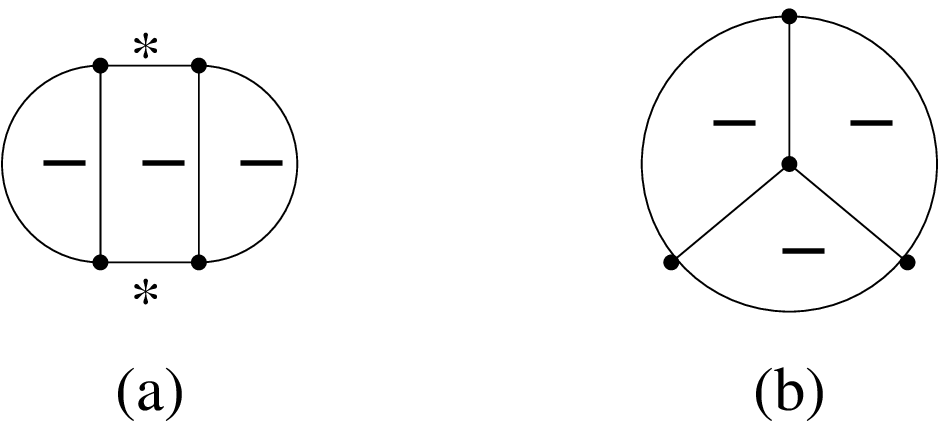}
\caption{Two special hypercircuit types: a quadruple path and a $K_4$ subdivision.  Again, each contained sign circuit is non-neutral.}
\label{F:hcirc-fat-k4}
\end{figure}

In the proof we use lollipops.  A \emph{lollipop} is a negative figure (a circle or half edge) with a path of length $\geq 0$ (the \emph{stick}) connected to it at one end.  The other end of the stick is the \emph{handle}.  (If the stick has length $0$, the handle is on the negative figure.)  A lollipop, if attached to another graph, is always attached at the handle.  It is convenient to call a lollipop \emph{positive} or \emph{negative} depending on the sign of the circle or half edge, irrespective of the stick signs.

\begin{proof}
The hyperbalanced hypercircuits $C$ are the neutral sign circuits because the matroid $\M(\Ups|C) = \bfF(\S|C)$.

For the rest of the proof we examine a hyperfrustrated hypercircuit $F$.  The nullity formula gives
\begin{equation}
\xi(F) = u_\S(F) +2.
\label{E:hfrust-hcct}
\end{equation}
Suppose $F$ has edge components $F_1, \ldots, F_k$.  Then 
$\xi(F) - u_\S(F) = \sum_i [\xi(F_i) - u_\S(F_i)] = 2.$
Since $\xi(F_i) \geq u_\S(F_i)$, this means $\xi(F_i) = u_\S(F_i)$ for all edge components but at most two.  Deleting those for which $\xi(F_i) = u_\S(F_i)$, we still satisfy \eqref{E:hfrust-hcct}, so by minimal dependency $F$ cannot contain such an edge component.  Therefore, $k\leq 2$.

If $F$ does have two edge components, each one satisfies $\xi(F_i) = u_\S(F_i) + 1$ so is hyperfrustrated.  Therefore, $F_i$ contains a non-neutral sign circuit $C_i$.  It follows from Lemma \ref{L:2hfrust} that $C_1 \cup C_2$ is already dependent; by minimality $F \subseteq C_1 \cup C_2$.  Deleting any edge gives an independent set, according to Theorem \ref{T:indep}, so equality holds and $F$ has type (IIa).

Now assume $k=1$, so $F$ is a single connected component.  Besides nullity 1 for $F$ itself we should have nullity 0 for every proper subset; that is,
$$
\xi(F\setm f) = u_\S(F \setm f) + \beta(f) \text{ for every } f \in F,
$$
where $\beta(f) = 0$ if $F\setm f$ is hyperbalanced and $1$ if it is not.  
In particular, $F$ must be formed by adding one edge, call it $e$, to an independent set $I$, and by minimal dependency it can have no pendant trees.  These rules allow us to construct candidates from the list in Theorem \ref{T:indep}.  

The nullity of an edge set is not changed by adding or subtracting a pendant edge.  A hypercircuit, therefore, has no tree components or pendant trees.   Suppose we add $e$ to $I$: no trees can remain pendant from $I \cup\{e\}$, so in particular any pendant trees in $I$ must combine with $e$ to form a path whose endpoints have degree at least 2.

Suppose $e$ joins two components of $I$, $I_1$ and $I_2$; neither can be a tree, so (from the structure theorem \ref{T:indep}) $I_1$ must be a negative lollipop.  We can get the same hypercircuit from a different $I$ in which the lollipop's negative figure is a circle formed from a path pendant from $I_2$.  This proves we may choose $e \in F$ so that $e$ is added to a single component of $I = F \setm \{e\}$, and as we assume $F$ is connected, so is $I$.

When we add $e$ to $I$, $e$ belongs to a set that is a maximal path whose interior vertices are divalent, or to a circle with one vertex attaching it to the rest of $F$, or to a circle that is $F$, or $e$ is a half edge with one vertex attaching it to the rest of $F$.  If $F$ is a circle, the circle must be positive and neutral since $F$ is dependent, so it falls under type (I).  In the other cases let us call the path or circle or half edge the \emph{$e$-ear}.  Let $A$ denote the $e$-ear and now let $I$ be the remainder of $F$.  $I$ is a connected independent set that is either a sign-unbalanced unicycle or one of the special types $S_0$ of Theorem \ref{T:indep}(II), and $\xi(F) = \xi(I) + 1$.

\emph{Case 0.  $F$ is a tight handcuff with two positive circles.}  This case has $\xi(F) = 2 = u_\S(F) + 2$, and deleting any edge gives nullity 1, so it is a hypercircuit.  This case is needed to rule out some examples.  In fact, we can now conclude that:

\begin{lem}\label{L:hcircposbal}
No hypercircuit can properly contain either a sign-balanced theta graph, or two positive circles with at most one common vertex.
\end{lem}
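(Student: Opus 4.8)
\emph{Proof plan.} The plan is to rely on the elementary matroid fact that a circuit cannot properly contain another circuit. So it suffices to show that each of the two configurations named in the lemma already \emph{contains} a hypercircuit: once that is done, a hypercircuit $F$ that properly contained such a configuration $X$ would properly contain the hypercircuit lying inside $X$, contradicting minimality of $F$.

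First I would treat a sign-balanced theta graph $\Theta$: let its two branch vertices be joined by internally disjoint (nonempty) paths $P_1,P_2,P_3$, so that $\Theta$ contains exactly the three positive circles $C_{ij}=P_i\cup P_j$. If some $C_{ij}$ is neutral, then $C_{ij}$ is a neutral sign circuit, hence a hyperbalanced hypercircuit (as recorded at the start of this proof), and it is a proper subset of $\Theta$ because $P_k$ is nonempty. If instead all three $C_{ij}$ are non-neutral, I would show $\Theta$ is itself a hypercircuit: it is connected, sign balanced, and hyperfrustrated (it contains the non-neutral sign circuit $C_{12}$), with $\xi(\Theta)=2$ and $b_\S(\Theta)=c(\Theta)=1$, so $\nul_\Ups\Theta=1$ by Lemma~\ref{L:nullity}; and deleting any edge $f$, say $f\in P_1$, leaves the positive circle $C_{23}$ with at most two pendant paths, i.e.\ a sign-balanced unicycle with attached trees whose unique sign circuit $C_{23}$ is non-neutral, which is independent by Theorem~\ref{T:indep}(II). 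Hence every proper subset of $\Theta$ is independent, so $\Theta$ is minimally dependent, a circuit.

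Next I would treat two positive circles $C_1,C_2$ with at most one common vertex. If either is neutral, it is a hypercircuit (again by the start of this proof) properly contained in $C_1\cup C_2$. If both are non-neutral, then $C_1\cup C_2$ is itself a hypercircuit: when $C_1$ and $C_2$ are disjoint it is the union of two disjoint non-neutral sign circuits, a hypercircuit of type~(IIa) as established above; when $C_1$ and $C_2$ share one vertex it is a tight handcuff of two positive circles, which is precisely the hypercircuit of Case~0. Either way the configuration contains a hypercircuit, which is a proper subset of $F$.

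Combining these observations with the no-proper-containment property of circuits finishes the argument. I expect no genuine obstacle here: the only point needing care is verifying, in the ``all circles non-neutral'' branches, that the relevant configuration is \emph{minimally} dependent rather than merely dependent, and that is immediate from the independence classification of Theorem~\ref{T:indep}. In effect the lemma just repackages Case~0, type~(IIa), and the nullity formula of Lemma~\ref{L:nullity}.
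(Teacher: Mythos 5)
Your proof is correct and takes essentially the same route as the paper, which obtains the lemma from the already-established disconnected hypercircuits, Case~0, and the independence classification of Theorem \ref{T:indep}; your version just spells out the neutral/non-neutral case split that the paper leaves implicit. (A small shortcut: both configurations have $\xi=2$ and $u_\S=0$, so by Lemma \ref{L:nullity} they are dependent whatever the gains are, and a circuit cannot properly contain a dependent set.)
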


\emph{Case 1.  $I$ is a unicycle.}  Then $\xi(F) = 2$ so $F$ is either a handcuff or a theta graph, and \eqref{E:hfrust-hcct} implies that all circles are positive and there is no half edge in the handcuff.  The handcuff cannot contain a neutral circle, by minimality, so its two circles are positive and non-neutral; but then omitting the connecting edges (if any) gives a hypercircuit of type (IIa), so this gives type (IIa) or (IIb).  The theta graph similarly must have three non-neutral positive circles, which means that the paths $P_1, P_2, P_3$ between its two trivalent vertices must have distinct gains (computed with the same initial and final vertices).  Deleting any edge gives a positive, non-neutral circle with up to two pendant paths, which is independent, so this is a hypercircuit of type (IIb).

\emph{Case 2.  $I$ is a theta graph with a single positive circle.}  Here $F$ is dependent since $\xi(F) = 3$ and $\nul_\Ups F = \xi(F) - u_\S(F) - 1 = 1$.  The open question is whether every proper subset is independent.

Let $I$ have trivalent vertices $u,v$ and constituent paths $P_1, P_2, P_3$ with positive circle $P_1 \cup P_3$.  From Theorem \ref{T:indep}(II) it is sign-unbalanced and its positive circle is non-neutral.

The $e$-ear $A$ may be a circle or half edge; then we have a theta graph with a lollipop attached at its handle.  Lemma \ref{L:hcircposbal} implies that the lollipop is negative.  Because $I$ also contains a negative circle, deleting any one edge either reduces $\xi$ by 1 or, if the edge is in the stick, increases $u_\S$ by 1; in either case we get nullity 0.  Thus, any edge set of this type is a hypercircuit of the kind in Figure \ref{F:hcirc-thetacircle}.

The alternative is that $A$ is a path.  

Suppose $A$ has endpoints in the interiors of different constituent paths.  This gives a homeomorph of $K_4$.  To analyze the signs and gains we may consider it to be $K_4$.  A signed $K_4$ has an even number of positive triangles, so $I$ has either no or two such triangles.  If it has none, we have a sign-antibalanced $K_4$ as in type (IIc).  If it has two, it contains a sign-balanced theta graph which is already a hypercircuit of type (IIb).

Otherwise, the endpoints of $A$ are contained within one constituent path $P_i$.  Here we have two subcases.  If $A$ has endpoints $u$ and $v$, then $F$ consists of four internally disjoint $uv$-paths and there must be two paths of each sign to avoid having a balanced theta graph as a proper subgraph, a violation of Lemma \ref{L:hcircposbal}.  This gives the unique $F$ that is a quadruple path.  If the endpoints are not $u$ and $v$, $A$ forms a circle $C$ with all or part of $P_i$.  This circle cannot be positive, because if it were, $F$ would properly contain a configuration forbidden by Lemma \ref{L:hcircposbal}.  Thus, $C$ is negative.  Then $F$ is the graph in Figure \ref{F:hcirc-fat-k4}(a).

\emph{Case 3.  $I$ is a handcuff and $F$ does not contain a theta graph.}  (If $F$ contains a theta graph $\Theta$, it falls under type (IIb) if $\Theta$ is sign balanced and by suitable choice of $e$ it has already been treated under Case 2 if $\Theta$ is sign unbalanced.)  
The handcuff has one or two negative figures.  

A negative lollipop can be attached anywhere; then $\xi(F) = 3 = u_\S(F) + 2$ so this set is dependent.  
A positive lollipop can only be attached if $I$ has two negative figures (by Lemma \ref{L:hcircposbal}),
but as this duplicated adding a negative lollipop to a handcuff with one positive circle, we need not consider it separately.
Figure \ref{F:hcirc-polycircles} shows the possibilities for $F$ and makes it clear that deleting any edge $f$ makes $F\setm f$ independent.  

The remaining possibility for $F$ is that it is $I$ with a path ear $A$.  As $F$ cannot contain a theta graph, $A$ must have endpoints in the connecting path of $I$, which means $F$ has three circles or half edges, of which no two have more than one common vertex and no two can be positive.  This case is like adding a lollipop to a handcuff as was just treated.

That completes the analysis of possible hypercircuits.
\end{proof}

The many topological types of hypercircuit make a complicated list so we classify them in another way.  All sign circuits are non-neutral except in type (1).  
\emph{Sign contrabalanced} means without positive circles.

\begin{cor}\label{C:hypercircuits}
These are the hypercircuits:

\begin{enumerate}[{\rm(1)}]
\item A sign circuit that is neutral.

\item A disconnected hypercircuit is the union of two disjoint non-neutral sign circuits.

\item A connected hypercircuit that is not a neutral sign circuit and contains at most two positive circles.
  \begin{enumerate}[{\rm(A)}]
  \item If it contains no positive circle, it is either three negative lollipops joined at their handles, or a negative circle with two negative lollipops attached at two different vertices.
  \item If it contains exactly one positive circle, it is either:
    \begin{enumerate}[{\rm(i)}]
    \item A sign-contrabalanced handcuff with a positive circle attached at one vertex, or a positive circle with two negative lollipops attached at two different vertices. 
    \item A sign-unbalanced theta graph with one negative lollipop attached.
    \end{enumerate}
  \item If it contains two positive circles, it is one of:
    \begin{enumerate}[{\rm(i)}]
    \item Two positive circles intersecting at exactly one vertex.
    \item A sign-unbalanced theta graph with a path ear $Q$ added on a theta path $P$, with $Q$ signed so the unique circle in $P \cup Q$ is negative.
    \end{enumerate}
  \end{enumerate}

\item A sign-balanced theta graph in which every circle is non-neutral.

\item A subdivision of a sign-antibalanced $K_4$.
\end{enumerate}
\end{cor}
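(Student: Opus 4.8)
The plan is to obtain the corollary as a pure regrouping of Theorem~\ref{T:hypercirc}: each topological type listed there is sorted according to how many positive circles the hypercircuit contains, with the count controlled by Lemma~\ref{L:hcircposbal} together with two elementary facts — in a theta graph the product of the three circle signs is $+1$ (so a positive circle forces the other two circles to be both positive or both negative), and two circles meeting in at most one vertex cannot be combined edge-wise into a third circle. As in Theorem~\ref{T:hypercirc} we treat $e_\infty$ as a non-neutral loose edge, so nothing new happens in $\M_\infty(\Ups)$, and the blanket stipulation ``all sign circuits are non-neutral except in type~(1)'' absorbs every neutrality hypothesis.

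First I would dispose of the immediate correspondences: corollary type~(1) is Theorem~\ref{T:hypercirc}(I), type~(2) is Theorem~\ref{T:hypercirc}(IIa), type~(4) is the theta-graph alternative of Theorem~\ref{T:hypercirc}(IIb) — a sign-balanced theta contains a positive circle, hence by the sign identity all three of its circles are positive, so it cannot fall under type~(3) — and type~(5) is Theorem~\ref{T:hypercirc}(IIc), whose three subdivided quadrilaterals are, by the analogous computation for $K_4$, all positive. That leaves the connected hyperfrustrated hypercircuits that are neither a sign-balanced theta nor a sign-antibalanced $K_4$ subdivision: the tight handcuff of two positive circles (Case~0 of the proof of Theorem~\ref{T:hypercirc}) together with types (IId), (IIe), (IIf). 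I would check, by inspecting Figures~\ref{F:hcirc-disconnected}--\ref{F:hcirc-fat-k4} and using the non-combination remark, that each of these contains $0$, $1$, or $2$ positive circles, never three; this is what makes ``contains at most two positive circles'' a faithful way to separate type~(3) from types (4) and (5).

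Finally I would match the count against the three branches of type~(3): branch~(A) is the subfamily of Theorem~\ref{T:hypercirc}(IIf) in which no negative figure is a positive circle, restated as three negative lollipops at a common handle or a negative circle carrying two negative lollipops; branch~(B) collects the configurations with exactly one positive circle — the remaining members of (IIf), restated as a sign-contrabalanced handcuff with a positive circle attached or a positive circle with two negative lollipops at distinct vertices, together with (IIe), the sign-unbalanced theta with a negative lollipop (a lollipop's figure being a circle or a half edge, this covers both drawings in Figure~\ref{F:hcirc-thetacircle}); and branch~(C) collects the two-positive-circle cases, namely (C)(i) the tight handcuff of two positive circles (Case~0 / the handcuff part of (IIb)) and (C)(ii) type (IId) together with the path-ear configuration of Case~2, restated as a sign-unbalanced theta with a path ear $Q$ on a theta path $P$ chosen so that the circle in $P\cup Q$ is negative. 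The only real work is this last bookkeeping step — rewriting the lollipop/handcuff/theta descriptions so that every topological type of Theorem~\ref{T:hypercirc} lands in exactly one branch and none is lost or duplicated; it is where an oversight would hide, so I would carry it out exhaustively, cross-checking each entry against its figure.
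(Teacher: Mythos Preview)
Your proposal is correct and matches the paper's approach: the paper offers no separate proof of the corollary, presenting it purely as a reclassification of the topological types in Theorem~\ref{T:hypercirc} according to the number of positive circles, with the blanket non-neutrality convention. Your explicit bookkeeping---mapping (I), (IIa), the theta and tight-handcuff halves of (IIb), (IIc), (IId), (IIe), (IIf) onto (1), (2), (4), (3)(C)(i), (5), (3)(C)(ii), (3)(B)(ii), (3)(A)/(3)(B)(i) respectively, and using the circle-sign identity plus Lemma~\ref{L:hcircposbal} to bound the positive-circle count---is exactly what the paper leaves implicit.
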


\sectionpage
\section{Minors}\label{sec:minors}

A \emph{minor} of $\Ups$ is the result of deleting and contracting edges.  The important property of minors of a gain signed graph is that they coordinate with minors of the associated matroid.

Deletion of an edge set $S$, denoted by $\Ups \setm S$, is obvious.  Contracting an edge set involves contraction in a signed graph, so we first define that.

\begin{defn}[Contraction of a Signed Graph \cite{SG}]\label{D:scontract}
Contracting a signed graph $\S$ by an edge set $S$ gives a signed graph $\S/S$.  
Its vertex set is $\pib(S)$.  Its edge set is $E \setm S$.  

For  a vertex $v \in V(\S)$, we denote by $B_v$ the block of the partial partition $\pib(S)$ that contains $v$, if there is one, that is, if $v$ is in a balanced component of $S$.  If $v$ is in an unbalanced component, then no $B_v$ exists.

An edge $e \in E \setm S$ becomes an edge in $\S/S$ with endpoints determined as follows.  
First, switch $\S$ so that in every balanced component all edges are positive.  

If $e=e_{vw}$ is a link or loop in $\S$, its endpoints in $\S/S$ are those of the sets $B_v, B_w \in \pib(S)$ that exist; then the sign of $e$ in $\S/S$ is its sign in the switched graph $\S$.  If one or both do not exist, $e$ has one or no endpoints in the contraction, thus becoming a half edge or loose edge.

If $e=e_v$ is a half edge in $\S$, it is a half edge in $\S/S$ with endpoint $B_v \in \pib(S)$ if $B_v$ exists, but it is a loose edge if $B_v$ does not exist.

If $e$ is a loose edge in $\S$, it is a loose edge in $\S/S$.

The contraction $\S/S$ is well defined up to switching of signs.
\end{defn}

\begin{lem}\label{L:sbalcontract}
In $\S$, suppose $S \subseteq E$ and $T \subseteq E \setm S$.

If $S$ is balanced in $\S$, then $T$ is balanced in $\S/S$ if and only if $S \cup T$ is balanced in $\S$.

If $S$ is unbalanced in $\S$ and $S \cup T$ is connected, then $T$ is unbalanced in $\S/S$ if $V(S \cup T) \supset U_\S(S)$, while $T$ consists of loose edges in $\S/S$ if $V(S \cup T) = U_\S(S)$.

For any edge set $S$, $b_\S(S \cup T) = b_{\S/S}(T)$.
\end{lem}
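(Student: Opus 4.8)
The three assertions all describe how sign balance of an edge set behaves under signed-graph contraction, so I would start from the common reduction provided by Definition~\ref{D:scontract}: switch $\S$ so that every sign-balanced component of $\S|S$ is all positive. Nothing is lost, because all three conclusions are invariant under switching (switching changes neither which circles are negative, nor which edges are half edges, nor the partial partition $\pib(S)$, the set $U_\S(S)$, or the numbers $b_\S$). After this switch the vertex set $V(\S/S)=\pib(S)$ is the set of vertex sets of the sign-balanced components of $\S|S$ --- call these the \emph{blocks} --- and an edge $e\in E\setm S$ keeps its (switched) sign in $\S/S$ but loses every end incident to a vertex of $U_\S(S)$; thus a link may become a half edge or a loose edge, a half edge may become a loose edge, and a loop based at a vertex of $U_\S(S)$ becomes loose. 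I would then prove the three parts in the order first, third, second.

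\emph{First assertion.} Here $S$ is balanced, so after the reduction $U_\S(S)=\eset$: every vertex lies in a block, so every edge of $T$ retains all of its ends and $(\S/S)|T$ is obtained from $\S|T$ by identifying vertices within blocks. Consequently $(\S/S)|T$ has a half edge iff $T$ does iff $S\cup T$ does (a balanced $S$ has no half edge); and a negative circle of $(\S/S)|T$ lifts to a negative closed walk of $\S|(S\cup T)$ by inserting, between consecutive $T$-edges, a positive connecting path inside the block where they meet, while any negative circle of $S\cup T$ must use a $T$-edge (a circle lying in the all-positive $\S|S$ is positive) and, on contracting its $S$-edges within their blocks, becomes a negative closed walk --- hence a negative circle --- of $(\S/S)|T$. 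So $(\S/S)|T$ contains a negative figure exactly when $S\cup T$ does; that is, $T$ is balanced in $\S/S$ iff $S\cup T$ is balanced in $\S$.

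\emph{Third assertion.} There are two routes. The quick one invokes the fact from \cite{SG} that signed-graph contraction realizes matroid contraction, $\bfF(\S/S)=\bfF(\S)/S$; then applying Theorem~\ref{L:sgmatroid} to $\S$ and to $\S/S$, together with the matroid contraction rank formula, gives
$$
|V(\S/S)|-b_{\S/S}(T)=\rk_{\S/S}(T)=\rk_\S(S\cup T)-\rk_\S(S)=b_\S(S)-b_\S(S\cup T),
$$
and since $|V(\S/S)|=|\pib(S)|=b_\S(S)$ (the number of sign-balanced components of $\S|S$, isolated vertices included), this collapses to $b_{\S/S}(T)=b_\S(S\cup T)$. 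A self-contained route builds a bijection $\kappa$ from the components of $(\S/S)|T$ to those components of $\S|(S\cup T)$ that meet $\bigcup\pib(S)$: send a component $K'$ of $(\S/S)|T$ to the component of $\S|(S\cup T)$ carrying all the $T$-edges of $K'$, which is well defined because two $T$-edges sharing a block in $\S/S$ lie in a common component of $\S|(S\cup T)$ (the block being connected), and surjectivity and injectivity follow from the same connectivity-through-blocks observation. One then checks that $\kappa$ preserves balance, and the identity follows because every sign-balanced component of $\S|(S\cup T)$ automatically meets $\bigcup\pib(S)$ (a balanced component can contain only balanced components of $\S|S$).

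\emph{Second assertion and the main obstacle.} The ``loose edges'' clause is immediate: $V(S\cup T)=U_\S(S)$ says every end of every edge of $S\cup T$ is deleted by the contraction, so every edge of $T$ becomes a loose edge. For the ``unbalanced'' clause, $S$ being unbalanced means $\S|S$ has an unbalanced component $Y$ with $V(Y)\subseteq U_\S(S)$, while $V(S\cup T)\supsetneq U_\S(S)$ supplies a vertex of $S\cup T$ lying in a block; since $S\cup T$ is connected, a path joining $Y$ to that vertex must use an edge with one end in $U_\S(S)$ and the other in $\bigcup\pib(S)$, and an $S$-edge never straddles (it stays inside one component of $\S|S$), so this edge is a $T$-link, which under contraction becomes a half edge. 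Hence $(\S/S)|T$ has a half edge and $T$ is unbalanced in $\S/S$. This ``straddling edge'' mechanism is exactly what does the work in the hard direction of ``$\kappa$ preserves balance'': a component of $\S|(S\cup T)$ that meets both $U_\S(S)$ and a block must contain a straddling $T$-link, so the parent of any balanced component of $(\S/S)|T$ lies entirely inside $\bigcup\pib(S)$, and the remaining argument is then the easy all-blocks case handled as in the first assertion. I expect that balance-preservation step to be the crux; the attendant nuisance is bookkeeping of degenerate cases --- singleton blocks arising from isolated vertices, unbalanced components of $\S|S$ whose imbalance is due to a half edge rather than a negative circle, and edges of $T$ that are loops or half edges --- none of which alters the argument but each of which must be checked so that the bijection and the straddling-edge step remain valid.
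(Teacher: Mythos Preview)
Your proposal is correct; the three parts are each handled soundly, and the ``straddling $T$-edge'' mechanism you isolate is exactly the right one for the unbalanced clause.

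The paper's proof is organized differently and is considerably shorter. For the first assertion it simply cites \cite[Lemma~4.1]{SG} rather than rederiving the lift/contract correspondence of negative circles that you spell out. For the second assertion the paper argues just as you do. The real divergence is in the third assertion: the paper does \emph{not} use the matroid rank identity $\bfF(\S/S)=\bfF(\S)/S$, nor does it set up an explicit bijection $\kappa$; instead it decomposes $S\cup T$ into its components $A_i$ and applies the first two parts componentwise --- if $S\cap A_i$ is balanced, the first part gives $b_{\S/S}(T\cap A_i)=b_\S(A_i)$, and if $S\cap A_i$ is unbalanced, the second part gives $b_{\S/S}(T\cap A_i)=0$ --- then sums. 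Your ``quick route'' via the rank formula is cleaner and avoids the component bookkeeping entirely, at the cost of importing the matroid-contraction fact from \cite{SG}; your bijection route and the paper's componentwise route are morally the same argument packaged differently, with your version making the balance-preservation step and its straddling-edge crux more explicit.
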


\begin{proof}
The first part is \cite[Lemma 4.1]{SG}.  The formula $b_{\S/S}(T) = b_\S(S \cup T)$ results from applying this to each component of $S \cup T$.

For the second part, suppose $V(S \cup T) \supset U_\S(S)$.  Then there is a link $e$ of $T$ that is not in $S$ and is incident to exactly one vertex of $U_\S(S)$.  In the contraction $e$ becomes a half edge; therefore $T$ is unbalanced in $\S/S$.  If however $V(S \cup T) = U_\S(S)$, then every edge of $T$ becomes a loose edge.

For the third part, consider each component $A_i$ of $A = S \cup T$ separately.  If $S \cap A_i$ is balanced in $\S$, then $b_{\S/S}(T \cap A_i) = b_\S(A_i)$ by the first part.  If $S \cap A_i$ is unbalanced, then $b_{\S/S}(T \cap A_i) = 0$ by the second part.  The general formula follows by addition over the $A_i$.
\end{proof}

\begin{defn}[Contraction of a Gain Signed Graph]\label{D:gscontract}
The contraction of an edge set $S$ in $\Ups$ is a gain signed graph or a signed graph, in either case denoted by $\Ups/S$.
These are the two cases.
\begin{enumerate}[1.]
\item  If $S$ is hyperbalanced, switch the gains on $\Ups$ so every edge in $S$ is neutral; then apply signed-graph contraction by $S$.  All edges of $E \setm S$ retain their (switched) gains.  This contraction is a gain signed graph.
\item  If $S$ is hyperfrustrated, erase all gains, leaving only the signed graph $\S$, and contract $S$ in $\S$.  This contraction is a signed graph without gains (or, it can be viewed as a gain signed graph in which all edges are neutral).
\end{enumerate}
The contraction $\Ups/S$ is well defined up to switching of signs and, in the first case, of gains.
\end{defn}

The first theorem says that minors are independent of the order of operations.  In particular, a minor can be computed by deleting and contracting one edge at a time.

\begin{thm}\label{T:minorsdef}
Suppose $S, T$ are disjoint subsets of $E$.  Then $(\Ups \setm S) \setm T = \Ups \setm(S \cup T)$, $(\Ups/S) \setm T = (\Ups \setm T)/S$, and $(\Ups/S)/T = \Ups/(S \cup T) = (\Ups/T)/S$.
\end{thm}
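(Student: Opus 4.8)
The plan is to isolate the one substantial identity and dispatch the rest by bookkeeping, keeping in mind that every ``$=$'' here is an equality of gain signed graphs up to switching, since that is all a contraction determines. The identity $(\Ups\setm S)\setm T=\Ups\setm(S\cup T)$ is immediate from the definition of deletion. For $(\Ups/S)\setm T=(\Ups\setm T)/S$, observe that a sign circuit contained in $S$ is the same object whether it is viewed in $\Ups$ or in the disjoint deletion $\Ups\setm T$, so $S$ is hyperbalanced in $\Ups$ exactly when it is hyperbalanced in $\Ups\setm T$; hence both sides fall under the same clause of Definition~\ref{D:gscontract}. In the hyperfrustrated clause both sides merely erase gains and perform signed-graph deletion and contraction, which commute by Definition~\ref{D:scontract}. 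In the hyperbalanced clause the gain switcher that makes $S$ neutral depends only on $\Ups|S$, so it is the same on both sides; the edges of $E\setm(S\cup T)$ retain the same switched gains and acquire the same endpoints, so the two results coincide.

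The remaining work is $(\Ups/S)/T=\Ups/(S\cup T)$; the outer equality $\Ups/(S\cup T)=(\Ups/T)/S$ then follows by exchanging $S$ and $T$. Throughout I will use freely that on the level of signed graphs $\S/S/T=\S/(S\cup T)$ and that balance under contraction behaves as in Lemma~\ref{L:sbalcontract} \cite{SG}. First suppose $S\cup T$ is hyperbalanced; then so are $S$ and $T$, being subsets of a hyperbalanced set. Since $\Ups/S$ is defined only up to gain switching, I may start from a $\Ups$ switched so that \emph{every} edge of $S\cup T$ is neutral (Theorem~\ref{L:hyperbalneutral}). Then $S$ is already neutral, so $\Ups/S$ is formed by signed-graph contraction with the gains carried along unchanged; every edge of $T$ is still neutral, so $T$ is hyperbalanced in $\Ups/S$ and the further contraction by $T$ again requires no switching. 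Hence $(\Ups/S)/T$ has underlying signed graph $\S/S/T=\S/(S\cup T)$ with the edges of $E\setm(S\cup T)$ carrying exactly their gains in this switched $\Ups$, which is precisely $\Ups/(S\cup T)$.

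Now suppose $S\cup T$ is hyperfrustrated. If one of $S,T$, say $S$, is itself hyperfrustrated, then $\Ups/S$ has its gains erased and equals $\S/S$, so $(\Ups/S)/T=\S/S/T=\S/(S\cup T)$, and this equals $\Ups/(S\cup T)$ because $S\cup T$ is hyperfrustrated. The genuinely delicate subcase is that $S$ and $T$ are each hyperbalanced while $S\cup T$ is not; here everything reduces to showing that $T$ is hyperfrustrated in $\Ups/S$, for then $(\Ups/S)/T$ also erases gains and once more equals $\S/S/T=\S/(S\cup T)=\Ups/(S\cup T)$. \textbf{This is the main obstacle.} To clear it I would prove the contraction analogue of Lemma~\ref{L:sbalcontract} for neutrality: with $S$ hyperbalanced and the gains switched so $S$ is neutral, a set $T\subseteq E\setm S$ is hyperbalanced in $\Ups/S$ if and only if $S\cup T$ is hyperbalanced in $\Ups$. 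The forward implication is the one used. Its proof mirrors the circle-lifting argument for balanced contraction: a sign circuit of $\Ups/S$ lying in $T$ comes from a sign circuit of $\Ups$ lying in $S\cup T$ of the corresponding type, where a collapsed negative circle or half edge may degenerate to a loose edge, which is itself a sign circuit; and the two have the same gain because the adjoined $S$-edges are neutral and so contribute nothing to the circuit-walk gain sum (using the additivity of walk gains, Lemma~\ref{L:sumwalkgains}, together with the switching invariance of sign-circuit gains, Theorem~\ref{P:swhyperbal}). The degeneracies---circles and half edges collapsing to loose or half edges when they meet sign-unbalanced components of $S$---are exactly the bookkeeping that makes this lemma the technical heart of the argument; granting it, hyperfrustration of $S\cup T$ forces $T$ to be hyperfrustrated in $\Ups/S$, and the theorem follows.
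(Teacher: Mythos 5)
Your overall architecture matches the paper's: reduce everything to how the gains behave, split on whether $S\cup T$ is hyperbalanced, and rest the hard case on the lemma that, for hyperbalanced $S$, a set $T\subseteq E\setm S$ is hyperbalanced in $\Ups/S$ if and only if $S\cup T$ is hyperbalanced in $\Ups$ (the paper's Lemma \ref{L:hbalcontract}). The gap is in your proof of that lemma. You correctly identify that the implication you need is ``$T$ hyperbalanced in $\Ups/S$ $\Rightarrow$ $S\cup T$ hyperbalanced in $\Ups$'' (equivalently, hyperfrustration of $S\cup T$ forces $T$ to be hyperfrustrated in $\Ups/S$), but the circle-lifting argument you sketch proves the \emph{converse}: knowing that every sign circuit $C'$ of $\Ups/S$ lying in $T$ lifts to a gain-equal sign circuit $C$ of $\Ups$ in $S\cup T$ lets you conclude that $C'$ is neutral whenever $S\cup T$ is hyperbalanced, i.e., it gives ``$S\cup T$ hyperbalanced $\Rightarrow$ $T$ hyperbalanced in $\Ups/S$.'' For the direction you actually need, you must start from an arbitrary sign circuit $C$ of $\Ups$ contained in $S\cup T$ and show it is neutral, and such a $C$ need not be the lift of any single sign circuit of $\Ups/S$: already in an all-positive graph, take $C$ the $5$-cycle on $v_1v_2v_3v_4v_5$ and $S=\{e_{12},e_{35}\}$; then $C\setm S$ becomes in $\S/S$ a pair of digons sharing one vertex, so $C$ decomposes rather than projecting to a circuit, and deducing $\phi(C)=0$ from the neutrality of the pieces requires an additivity argument that is genuinely delicate once handcuffs and half edges are involved (the gain of a sign circuit is not a linear function of the edge gains).

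The same missing implication is silently used in your case ``one of $S,T$ is hyperfrustrated'': the ``say $S$'' reduction does not cover $S$ hyperbalanced, $T$ hyperfrustrated, where you again need $T$ to be hyperfrustrated in $\Ups/S$. The clean way to close the gap---and the paper's route---is to avoid circuits entirely and argue by switching: with $\Ups$ switched so $S$ is all neutral, take a gain switcher $\theta$ on $V(\Ups/S)=\pib(S)$ that neutralizes $T$ in $\Ups/S$ (Theorem \ref{L:hyperbalneutral}) and pull it back to $\theta'(v):=\theta(B_v)$ on $V(\Ups)$; this single switcher neutralizes every edge of $S\cup T$, so $S\cup T$ is hyperbalanced by the trivial direction of Theorem \ref{L:hyperbalneutral}. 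With that lemma in hand, the rest of your argument goes through.
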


The same theorem for signed graphs is \cite[Proposition 4.2]{SG}.  As in \cite{SG}, equality in the contraction formulas has to be interpreted as allowing for certain name changes, as the vertex set of $\Ups/(S\cup T)$ is $\pib(S\cup T)$ in $\S$, while that of $(\Ups/S)/T$ is $\pib(T)$ in $\S/S$.  We handle this by identifying vertex sets in the natural way.  We use the notation of signed graphs, since the gains do not affect the vertex sets of contractions.  For $v \in V(\S)$, let $B_v(S)$ be the set in $\pib(\S,S)$, defined in $\S$, that contains $v$; similarly, for $B_v \in V(\S/S)$ let $B'_{B_v}(T)$ be the set in $\pib(\S/S,T)$, defined in $\S/S$, that contains $B_v(S)$, and finally, let $B_v(S\cup T)$ be the set in $\pib(\S,S \cup T)$ that contains $v$.  Then it is a fact that $B_v(S \cup T) = \bigcup B'_{B_v}(T)$, the union of all sets $B_w(S) \in B'_{B_v}(T)$, so we can identify the vertex $B_v(S \cup T)$ in $\S/(S\cup T)$ with the vertex $B'_{B_v}(T)$ in $(\S/S)/T$.

We need a gain analog of Lemma \ref{L:sbalcontract}.

\begin{lem}\label{L:hbalcontract}
Suppose $S \subseteq E$ is hyperbalanced in $\Ups$ and $T \subseteq E \setm S$.  Then $T$ is hyperbalanced in $\Ups/S$ if and only if $S \cup T$ is hyperbalanced in $\Ups$.
\end{lem}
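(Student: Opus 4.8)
The plan is to work with a convenient representative of $\Ups/S$ and reduce everything to neutrality of individual edges. First, by Lemma \ref{L:signswitching} I would sign-switch $\Ups$ so that every sign-balanced component of $\S|S$ is all-positive; this changes no gains and, by Theorem \ref{P:swhyperbal}, no hyperbalance status, and $\Ups/S$ is in any case defined only up to switching. After this normalization, for any gain switcher making all edges of $S$ neutral, Definition \ref{D:gscontract} computes $\Ups/S$ directly: its vertex set is $\pib(S)$, and each edge $e\in T$ keeps its current gain and its orientation at each retained end, except that an end of $e$ at a vertex of $U_\S(S)$ is deleted, so $e$ may degenerate from a link to a half edge or to a loose edge. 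I would also use repeatedly the elementary fact that a sign circuit all of whose edges are neutral is itself neutral: its gain equals that of a circuit walk, which by \eqref{E:walkgainsum}--\eqref{E:ultrawalkgainsum} is a $\pm$-signed sum of the gains of the edges traversed.

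For the ``if'' direction, suppose $S\cup T$ is hyperbalanced in $\Ups$. By Theorem \ref{L:hyperbalneutral} I may gain-switch $\Ups$ so that every edge of $S\cup T$ is neutral; since every edge of $S$ is then neutral, this switcher is admissible for forming $\Ups/S$, in which accordingly every edge of $T$ is neutral. Hence every sign circuit contained in $T$ is neutral in $\Ups/S$, so $T$ is hyperbalanced there; and the conclusion does not depend on the chosen representative of $\Ups/S$ since switching preserves hyperbalance.

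For the converse, suppose $T$ is hyperbalanced in $\Ups/S$. Using Theorem \ref{L:hyperbalneutral} (applicable since $S$ is hyperbalanced) I fix a gain switcher making every edge of $S$ neutral and form $\Ups/S$ with it; by Theorem \ref{L:hyperbalneutral} applied to $\Ups/S$ there is then a gain switcher $\bar\theta\colon\pib(S)\to\K^+$ under which every edge of $T$ becomes neutral in $\Ups/S$. I would lift $\bar\theta$ to $\theta\colon V(\S)\to\K^+$ by $\theta(v):=\bar\theta(B_v)$ when $v$ lies in the sign-balanced component $B_v$ of $\S|S$ (including the case of an isolated vertex, which is such a component) and $\theta(v):=0$ when $v$ lies in an unbalanced component, and show that every edge of $S\cup T$ is neutral in $\Ups^\theta$. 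For $e\in S$: $e$ lies inside a single component of $\S|S$; if that component is balanced then $\theta$ is constant on it and $e$ is already positive and neutral, so $\phi^\theta(e)=0$; if it is unbalanced then $\theta$ is $0$ on it and $\phi^\theta(e)=\phi(e)=0$ (loose edges of $S$ being already neutral by hyperbalance of $S$). For $e\in T$: the edge of $\Ups/S$ arising from $e$ has the same gain and the same orientation at each retained end, $\theta$ agrees with $\bar\theta$ on those retained ends, and $\theta$ vanishes on any deleted end, so $\phi^\theta(e)$ equals the $\bar\theta$-switched gain of that edge in $\Ups/S$, namely $0$ — this needs the link, half-edge, and loose-edge degeneration cases to be checked, the sign of $e$ playing no role in the link case because it multiplies $\theta=0$ at the absorbed end, and the loose-edge case using that such an $e$ is a sign circuit of $\Ups/S$ hence already neutral there. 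Therefore all edges of $S\cup T$ are neutral in $\Ups^\theta$, so every sign circuit in $S\cup T$ is neutral, i.e.\ $S\cup T$ is hyperbalanced; by Theorem \ref{P:swhyperbal} it is hyperbalanced in $\Ups$.

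The step I expect to be the main obstacle is precisely that edge-by-edge verification in the converse: reconciling the gain-switching formula in $\Ups$ with the one in $\Ups/S$ across the three ways an edge of $T$ can degenerate under contraction, and confirming in each case that ends absorbed into $U_\S(S)$ contribute nothing because $\theta$ is $0$ there. Given Theorems \ref{L:hyperbalneutral} and \ref{P:swhyperbal} and the remark that a sign circuit of neutral edges is neutral, the remaining bookkeeping is routine.
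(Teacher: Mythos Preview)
Your proof is correct and follows essentially the same route as the paper's: both directions invoke Theorem~\ref{L:hyperbalneutral} to replace hyperbalance by ``all edges neutral after a suitable gain switch'' and then transport the switcher between $\Ups$ and $\Ups/S$. Your argument is in fact more careful than the paper's in one place: the paper lifts $\theta$ by setting $\theta'(v)=\theta(B_v)$ and asserts that $B_v$ is defined for every $v$ ``due to balance of $S$'', whereas $S$ is only assumed hyperbalanced, not sign-balanced, so $S$ may well have sign-unbalanced components on which $B_v$ is undefined. You handle this correctly by setting $\theta(v)=0$ on $U_\S(S)$ and then verifying edge by edge---including the link, half-edge, and loose-edge degenerations of edges of $T$---that ends absorbed into $U_\S(S)$ contribute nothing to the switched gain; the paper's proof leaves this implicit.
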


\begin{proof}
We depend on the fact that contraction of $\Ups$ is built upon contraction in $\S$.

If $S \cup T$ is hyperbalanced, switch so its edges are all neutral.  Contracting $S$ leaves the gains on $T$ neutral, so $T$ is hyperbalanced in $\Ups/S$.

Conversely, suppose $T$ is hyperbalanced in $\Ups/S$.  Assume the gains in $\Ups$ have been switched so every edge of $S$ is neutral.  Now, switch $\Ups/S$ so every edge in $T$ is neutral in $\Ups/S$.  Switching gains in $\Ups/S$ by $\theta$, to $\phi^\theta$, can be applied to $\Ups$: define $\theta' : V \to \K^+$ by $\theta'(v) = \theta(B_v)$, since $B_v$ is defined for every vertex $v \in V$, due to balance of $S$.  Switching $\Ups$ in this way neutralizes every edge in $S \cup T$, proving hyperbalance of $S \cup T$, and gives the contraction the switched gains $\phi^\theta$.
\end{proof}

\begin{proof}[Proof of Theorem \ref{T:minorsdef}]
The parts with deletions are routine.  Signed graph minors obey the formulas in Theorem \ref{T:minorsdef} \cite[Proposition 4.2]{SG}.  Contraction of a gain signed graph follows the rules for signed-graph contraction supplemented by a rule for gains, so the only question is how the gains behave.  If $S\cup T$ is hyperbalanced, we may assume by gain switching that $\phi|_{S\cup T} \equiv 0$; then the gains off $S \cup T$ are never changed in any of the contractions.  If $S \cup T$ is not hyperbalanced, $\Ups/(S \cup T)$ has no gains.  In $\Ups/S$, if $S$ is hyperfrustrated there are no gains, so there are no gains in $(\Ups/S)/T$; while if $S$ is hyperbalanced, Lemma \ref{L:hbalcontract} tells us that $T$ is hyperfrustrated in $\Ups/S$ so there are no gains in $(\Ups/S)/T$.  Either way, there are no gains in any of $(\Ups/S)/T$, $\Ups/(S \cup T)$, and $(\Ups/T)/S$, so they are equal by \cite[Proposition 4.2]{SG}.
\end{proof}

\begin{thm}\label{T:minorsmatroid}
Suppose $S \subseteq E$.  Then $\M(\Ups \setm S) = \M(\Ups) \setm S$ and $\M(\Ups/S) = \M(\Ups)/S$.
\end{thm}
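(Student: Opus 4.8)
The plan is to prove both identities at the level of rank functions, using the explicit formula $\rk_\Ups(S)=n-b_\S(S)+\delta_\Ups(S)$ from Definition \ref{D:rank}. The deletion identity is immediate: deleting an edge set removes no vertices, so for $T\subseteq E\setm S$ the quantities $b_\S(T)$ and $\delta_\Ups(T)$ are computed exactly as in $\Ups$, whence $\rk_{\Ups\setm S}(T)=n-b_\S(T)+\delta_\Ups(T)=\rk_\Ups(T)=\rk_{\M(\Ups)\setm S}(T)$; equivalently, the restriction of $\z$ to $E\setm S$ simultaneously represents $\M(\Ups\setm S)$ by Theorem \ref{T:rank} and $\M(\Ups)\setm S$ by definition of matroid deletion. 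So all the work is in the contraction identity $\M(\Ups/S)=\M(\Ups)/S$, for which I would use $\rk_{\M(\Ups)/S}(T)=\rk_\Ups(S\cup T)-\rk_\Ups(S)$ and split into the two cases of Definition \ref{D:gscontract}.

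First suppose $S$ is hyperbalanced, so $\Ups/S$ is a gain signed graph whose vertex set is $\pib(S)$, hence $|V(\Ups/S)|=b_\S(S)$, and $\rk_\Ups(S)=n-b_\S(S)$. I would combine $b_{\S/S}(T)=b_\S(S\cup T)$ (Lemma \ref{L:sbalcontract}, third part) with Lemma \ref{L:hbalcontract}, which gives $\delta_{\Ups/S}(T)=\delta_\Ups(S\cup T)$ since $T$ is hyperbalanced in $\Ups/S$ iff $S\cup T$ is hyperbalanced in $\Ups$ (valid because $S$ is hyperbalanced). Then
\[
\rk_{\Ups/S}(T)=b_\S(S)-b_\S(S\cup T)+\delta_\Ups(S\cup T)=\bigl(n-b_\S(S\cup T)+\delta_\Ups(S\cup T)\bigr)-\bigl(n-b_\S(S)\bigr)=\rk_\Ups(S\cup T)-\rk_\Ups(S),
\]
as wanted.

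Second suppose $S$ is hyperfrustrated. Then $\Ups/S$ carries no gains, i.e.\ it is the all-neutral — hence hyperbalanced — gain signed graph on $\S/S$, so $\M(\Ups/S)=\bfF(\S/S)$ with rank function $|\pib(S)|-b_{\S/S}(T)=b_\S(S)-b_\S(S\cup T)$, again using Lemma \ref{L:sbalcontract}. On the other side, $S\subseteq S\cup T$ and $S$ hyperfrustrated forces $S\cup T$ hyperfrustrated, so $\rk_\Ups(S\cup T)-\rk_\Ups(S)=\bigl(n-b_\S(S\cup T)+1\bigr)-\bigl(n-b_\S(S)+1\bigr)=b_\S(S)-b_\S(S\cup T)$, which matches.

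Two points need care rather than difficulty. The contraction $\Ups/S$ is only defined up to switching of signs and, in the hyperbalanced case, of gains, so I would close by invoking Corollary \ref{C:swrep} to see that $\M(\Ups/S)$ is well defined. And one must track vertex sets: both $\M(\Ups/S)$ and $\M(\Ups)/S$ live on ground set $E\setm S$, but the ``$n$'' in the rank formula for $\Ups/S$ is $|\pib(S)|$, not $|V|$ — keeping this bookkeeping straight, and checking that the all-neutral gain signed graph has frame-matroid rank (from Theorem \ref{L:sgmatroid}), is the only real subtlety. I do not expect a serious obstacle: the substantive content is already carried by Lemmas \ref{L:sbalcontract} and \ref{L:hbalcontract} together with Definition \ref{D:rank}.
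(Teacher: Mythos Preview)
Your proposal is correct and follows essentially the same approach as the paper: compare rank functions, dispose of deletion trivially, and for contraction split into the hyperbalanced and hyperfrustrated cases of $S$, invoking Lemma \ref{L:sbalcontract} for $b_{\S/S}(T)=b_\S(S\cup T)$ and Lemma \ref{L:hbalcontract} for $\delta_{\Ups/S}(T)=\delta_\Ups(S\cup T)$. Your closing remarks on well-definedness via Corollary \ref{C:swrep} and on tracking $|V(\Ups/S)|=b_\S(S)$ are sound additions that the paper leaves implicit.
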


\begin{proof}
We prove the rank functions agree.  This is trivial for deletion.  

For contraction we compute the ranks in $\M$ and $\M/S$.  With $A \subseteq E \setm S$, first from the definition of rank in a contraction matroid:
\begin{align*}
\rk_{\M(\Ups)/S} A &= \rk_\M(A \cup S) - \rk_\M(S) 
\\&= [n - b_\S(A \cup S) + \delta_\Ups(A\cup S)] - [n - b_\S(S) + \delta_\Ups(S)] 
\\&= b_\S(S) - b_\S(A \cup S) + \delta_\Ups(A\cup S) - \delta_\Ups(S),
\end{align*}
and second from rank in the contracted gain signed graph:
\begin{align*}
\rk_{\M(\Ups/S)} A &= |V(\Ups/S)| - b_{\S/S}(A) + \delta_{\Ups/S}(A) 
\\&= b_\S(S) - b_{\S/S}(A) + \delta_{\Ups/S}(A) .
\end{align*}
Now we consider two cases.

If $S$ is hyperbalanced, then $\delta_{\Ups/S}(A) = \delta_\Ups(A \cup S)$ by Lemma \ref{L:hbalcontract}, so
$$
\rk_{\M(\Ups)/S} A = b_\S(S) - b_\S(A \cup S) + \delta_\Ups(A\cup S)
$$
and
$$
\rk_{\M(\Ups/S)} A = b_\S(S) - b_{\S/S}(A) + \delta_\Ups(A\cup S).
$$
By Lemma \ref{L:sbalcontract} $b_\S(A \cup S) = b_{\S/S}(A)$.  Therefore, the ranks are equal.

If $S$ is hyperfrustrated, then $\Ups/S = \S/S$ so $\M(\Ups/S) = \bfF(\S/S)$.  The contracted matroid computation is
\begin{align*}
\rk_{\M(\S)/S} A &= \rk_\M(A \cup S) - \rk_\M(S) 
\\&= [n - b_\S(A \cup S)] - [n - b_\S(S)] 
\\&= b_\S(S) - b_\S(A \cup S).
\intertext{The computation in the contracted graph is}
\rk_{\M(\Ups/S)} A &= |V(\S/S)| - b_{\S/S}(A)
\\&= b_\S(S) - b_{\S/S}(A).
\\&= b_\S(S) - b_\S(A \cup S)
\end{align*}
by Lemma \ref{L:sbalcontract}.  Thus, the ranks are equal in this case as well.
\end{proof}

Contraction of a single edge is sufficiently important to merit separate statement.

\begin{cor}\label{C:edgecontraction}
For $e \in E$, the contraction $\Ups/e$ is described in the following list:
\begin{enumerate}[{\rm(a)}]
\item\label{C:edgecontraction:link} If $e$ is a link $e_{uv}$, switch signs and gains so it is positive and neutral, identify $u$ and $v$, and delete $e$.  All other edges retain their switched signs and gains.
\item\label{C:edgecontraction:loose} If $e$ is a positive loop or loose edge and is neutral, delete it and retain all signs.  If $e$ is neutral, retain all gains.  If not, erase all gains.
\item\label{C:edgecontraction:half} If $e$ is a half edge or negative loop at vertex $v$, delete it and $v$, thus removing $v$ as an endpoint from any other edge incident with $v$.  Retain the signs of edges that do not become loose or half edges; make loose edges positive and half edges negative.  If $e$ is neutral, retain all gains.  If not, erase all gains.
\item\label{C:edgecontraction:extra} If $e$ is the extra point $e_\infty$, delete it and erase all gains but retain signs.
\end{enumerate}

In the extended matroid $\M_\infty(\Ups)$, when a neutral edge or a link is contracted the extra point $e_\infty$ remains the extra point.  When a non-neutral, non-link edge is contracted, $e_\infty$ becomes a matroid loop; it may be treated as a loose edge in the resulting signed graph.
\end{cor}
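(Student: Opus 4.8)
The plan is to read each clause off Definitions~\ref{D:gscontract} and~\ref{D:scontract} with $S=\{e\}$, using Theorem~\ref{T:minorsmatroid} for the matroid assertions. The one preliminary fact needed is which one-edge sets are sign circuits: only positive loops and loose edges qualify, since a half edge or a negative loop is a single negative figure (not a handcuff) and a link contains no circle. Hence $\{e\}$ is hyperfrustrated exactly when $e$ is a non-neutral positive loop or a non-neutral loose edge, and hyperbalanced otherwise; this decides which branch of Definition~\ref{D:gscontract} produces $\Ups/e$: branch~1 (keep the switched gains) in the hyperbalanced case, branch~2 (erase gains) in the hyperfrustrated case.

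Next I would go through the edge types. If $e=e_{uv}$ is a link then $\{e\}$ is hyperbalanced, so by branch~1 one first gain-switches so $e$ is neutral and sign-switches so $e$ is positive (both done by switching at $v$), and then signed-graph contraction by $\{e\}$ merges the block $\{u,v\}\in\pib(\{e\})$ and deletes $e$, while every other edge keeps its switched sign and gain, with endpoints relabelled through the merge; that is~(a). If $e$ is a positive loop or a loose edge, then $\pib(\{e\})$ is the discrete partition, so the signed-graph contraction merely deletes $e$ and changes nothing else, and gains are kept (branch~1) when $e$ is neutral and erased (branch~2) when it is not; that is~(b). If $e$ is a half edge or negative loop at $v$, the component of $\S|\{e\}$ on $v$ is sign-unbalanced, so $B_v$ does not exist and signed-graph contraction deletes $v$ along with $e$, turning every other link at $v$ into a (negative) half edge and every other loop or half edge at $v$ into a (positive) loose edge; this is~(c). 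Finally, reading $e_\infty$ as a non-neutral loose edge makes $\{e_\infty\}$ hyperfrustrated, so branch~2 applies, erasing all gains and deleting $e_\infty$, which is~(d).

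For the extended matroid I would use $\M_\infty(\Ups)=\M(\Ups')$, where $\Ups'$ is $\Ups$ with one extra non-neutral loose edge $f$ standing in for $e_\infty$, together with $\M(\Ups')/e=\M(\Ups'/e)$ from Theorem~\ref{T:minorsmatroid}. The point $e_\infty$ becomes a matroid loop in $\M_\infty(\Ups)/e$ precisely when $e_\infty\in\clos_{\M_\infty(\Ups)}(\{e\})$, and by Theorem~\ref{T:closure} this happens exactly when $\{e\}$ is hyperfrustrated, equivalently when $\z(e)$ is a nonzero scalar multiple of $\e_0=\z(e_\infty)$ (so $e\parallel e_\infty$); in that case branch~2 has already erased the gains, so $e_\infty$ sits in the resulting signed graph as an ordinary, neutral loose edge. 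In every other case $\{e\}$ is hyperbalanced, $e_\infty$ is not in the closure of $\{e\}$, and branch~1 contraction never disturbs loose edges, so $e_\infty$ persists unchanged as the extra point.

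I expect the fiddly step to be clause~(c): one must track the type changes of the edges incident with the deleted vertex $v$ (link~$\to$~half edge, loop or half edge~$\to$~loose edge), confirm how their gains transform under the gain-switch that neutralises $e$, and restore the sign conventions (half edges negative, loose edges positive). The only other point requiring real care is the extended-matroid claim, which rests on identifying exactly when $e$ and $e_\infty$ are parallel in $\M_\infty(\Ups)$ — that is, when $\z(e)$ is a scalar multiple of $\e_0$.
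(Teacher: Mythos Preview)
The paper gives no explicit proof of this corollary; it is left as an immediate consequence of Definitions~\ref{D:scontract} and~\ref{D:gscontract}, so your plan of unpacking those definitions with $S=\{e\}$ is exactly the intended route, and your treatment of clauses~(a), (b), (d) is correct.

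However, if you carry your own analysis of clause~(c) and of the extended-matroid paragraph through to the end, you will find it does \emph{not} reproduce the corollary as stated. You correctly observe that a single half edge or negative loop contains no sign circuit, so $\{e\}$ is hyperbalanced regardless of the gain on $e$; hence branch~1 of Definition~\ref{D:gscontract} always applies in clause~(c), and one retains the (switched) gains after neutralising $e$---one does not erase them. Likewise, your extended-matroid argument correctly shows that $e_\infty\in\clos_\infty(\{e\})$ exactly when $\{e\}$ is hyperfrustrated, which by your own first paragraph happens only for a non-neutral positive loop or loose edge, never for a half edge or negative loop (in the vector model: $\z(e)$ for a half edge or negative loop has a nonzero $\e_i$-coordinate, so is never parallel to $\z(e_\infty)=\e_0$). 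Thus your method actually contradicts the phrase ``If not, erase all gains'' in~(c) and the ``non-neutral, non-link edge'' criterion in the final paragraph; these appear to be misstatements in the corollary. You should flag this discrepancy explicitly rather than gloss over it with ``this is~(c)''.
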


\sectionpage
\section{The hyperplane model}\label{sec:hyperplanes}

The biggest reason for gain signed graphs is that they encode popular hyperplane arrangements in a more manageable form.

\subsection{Affine hyperplanes}\label{sec:affinehyp}\

The hyperplane associated to an edge $e_{ij}$ lies in the affine space $\bbA^n(\K)$ and is given by 
$$\h(e):  \tau(v_i,e)x_i + \tau(v_j,e)x_j = -\phi(e).$$  
(The affine space is distinguished from the vector space $\K^n$ by having inhomogeneous as well as homogeneous subspaces.)  
The hyperplane of a half edge $e$ at $v_i$ is $x_i = -\phi(e)$.  The ``honorary hyperplane'' of a loose edge is given by the equation $0=-\phi(e)$, which is the \emph{degenerate hyperplane} $\h(e) = \bbA^n(\K)$ if $\phi(e)=0$ and the \emph{phantom hyperplane} $\eset$ if $\phi(e) \neq 0$.  (That makes sense in the projectivization.)  The honorary hyperplane of the extra point $e_\infty$ is the phantom hyperplane.
Thus from a gain signed graph $\Ups$ with gain group $\K^+$ we get the hyperplane arrangement
$$
\cA[\Ups] := \{ \h(e):  e \in E \}.
$$
The intersection semilattice of this arrangement is
$$
\cL(\cA[\Ups]) := \{ \textstyle\bigcap\cS : \cS \subseteq \cA[\Ups],\ \bigcap\cS \neq \eset \}.
$$

We list some arrangements of the type gain signed graphs are intended for, which we call \emph{(signed) affinographic} since they are affine deformations of graphic or signed-graphic hyperplane arrangements.
By $\eps e_{ij}$ we denote an edge with endpoints $v_i$ and $v_j$ and sign $\eps$, while by $\pm e_{ij}$ we denote two edges with both signs, a positive edge $+e_{ij}$ and a negative edge $-e_{ij}$.  A simple graph $\G$ thus gives rise to a signed graph $\eps\G$, in which all edges have the same sign $\eps$, and to $\pm\G = (+\G) \cup (-\G)$, in which all edges are doubled with both signs.
\begin{enumerate}[({Af}1)]
\item\label{Adef} Affine arrangements whose edge sets have forms like $\{ (+e_{ij},g) : i< j,\ -k \leq g \leq l \}$, known as deformations of the (all-positive) complete-graphic arrangement $\cA_{n-1}$ with edge set $\{ (+e_{ij},0) : i< j \}$.
\item\label{Bdef} Affine arrangements whose edge sets have forms like $\{ (\pm e_{ij},g) : i< j,\ -k \leq g \leq l \} \cup \{ (-e_{ii},g),\ -k \leq g \leq l \}$, known as deformations of the complete signed-graphic arrangement $\cB_n$ with edge set $\{ (\pm e_{ij},0) : i< j \} \cup \{ (-e_{ii},0) \}$.
\item\label{Ddef} Affine deformations whose edge sets have forms like $\{ (\pm e_{ij},g) : i<j,\ -k \leq g \leq l \}$, of the signed-graphic arrangement $\cD_n$ with edge set $\{ (\pm e_{ij},0) : i< j \}$.
\item\label{Cdef} An affinographic arrangement of Coxeter type, whose edge set is as in (Af\ref{Bdef}) or (Af\ref{Ddef}) with $k=0$ \cite[Theorem 3.5]{Athan}.
\item\label{Spm} The sign-symmetric Shi arrangement, whose edge set is $\{ (\pm e_{ij},g) : i<j,\ g=0,1\}$, and a variety of similar arrangements \cite[Section 3]{Athan}.
\item\label{St} The Shi threshold arrangement, whose edge set is $\{ (-e_{ij},g) : i<j,\ g=0,1\}$ \cite{Seo1}, and the similar arrangement in \cite[Theorem 5.4]{Athan}.
\item\label{Lt} The Linial threshold arrangement with coordinate and shifted hyperplanes, whose edge set is $\{ (-e_{ij},1) : i<j \} \cup \{ (-e_i,g) : g = 0,1 \}$ \cite{Song1, Song2, Song3, Song4}.
\item\label{Ct} The Catalan threshold arrangement, whose edge set is $\{ (-e_{ij},g): i<j,\ g=0,\pm1 \}$ \cite{Seo2}.
\item\label{gent} A generalized threshold arrangement, whose edge set is $\{ (-e_{ij},g): i<j,\ -k \leq g \leq l \}$ \cite{Bala}.
\end{enumerate}

\subsection{Projective hyperplanes}\label{sec:projhyp}\

The link between this affine arrangement and the vector model in linear space is through the \emph{projectivization}, 
$$\cA_\bbP[\Ups] := \{ \h(e)_\bbP: e \in E \} \cup \{h_\infty\},$$
where $h_\infty$ denotes the ideal or infinite hyperplane while $\h(e)_\bbP$ is the extension of $\h(e)$ into the projective space $\bbP^n(\K)$ if $\h(e)$ is not the phantom hyperplane and $\h(e)_\bbP = h_\infty$ if $\h(e)$ is the phantom hyperplane.  (The affine part of $h_\infty$ is $\eset$, which should explain the name ``phantom hyperplane''.)
We define $\h_\bbP(e) := \h(e)_\bbP$ in order to have the function $h_\bbP$ that maps edges to projective hyperplanes.
The addition of the ideal hyperplane in the projectivization ensures that, in the real case, the regions remain the same, and in the complex case, the complement of the arrangement remains the same.  

The intersection lattice of the projective arrangement is 
$$\cL(\cA_\bbP[\Ups]) := \{ \textstyle\bigcap\cS : \cS \subseteq \cA_\bbP[\Ups] \}.$$
This is the lattice of closed sets of a matroid $\M(\cA_\bbP)$ whose ground set is the set of hyperplanes and whose rank function is $\rk\cS = \codim\bigcap\cS$.
The affine intersection semilattice $\cL(\cA[\Ups])$, which is a meet subsemilattice of $\cL(\cA_\bbP[\Ups])$, is a \emph{geometric semilattice}, as defined by Wachs and Walker \cite{WW}.  One definition is that it consists of the flats of a geometric lattice that do not lie above a fixed atom; in our case the lattice is $\cL(\cA_\bbP[\Ups])$ and the atom is $h_\infty$.  
The corresponding matroid-like structure is a \emph{semimatroid}, defined subsequently by Ardila \cite{Ard}.

The coordinates in projective space are homogeneous coordinates $[x_0,x_1,\ldots,x_n]$ (not all zero).  Projecting the vector model in $\K^{1+n}$ to homogeneous coordinates in $\bbP^n(\K)$, the hyperplane $\h_\bbP(e)$ is the dual space of the projected vector $[\z(e)]$; that is, $\h_\bbP(e) = \{ [\bfy] \in \bbP^n(\K):  [\z(e)]\cdot[\bfy]=0\}$.

\begin{thm}[Hyperplane Representations]\label{T:hyperplanes}
The matroid $\M(\cA_\bbP[\Ups])$ is isomorphic to $\M_\infty(\Ups)$ by the mapping $\h_\bbP$, which induces a lattice isomorphism $\cL(\cA_\bbP[\Ups]) \cong \Lat\M_\infty(\Ups)$ and a semilattice isomorphism $\cL(\cA[\Ups]) \cong \Latb\Ups$.
\end{thm}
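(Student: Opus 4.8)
The plan is to deduce the entire theorem from Theorem~\ref{T:rank} by transporting the matroid structure through the classical point--hyperplane duality in projective space. Recall that Theorem~\ref{T:rank} says the vector model $\z : E_\infty \to \K^{1+n}$ represents $\M_\infty(\Ups)$, that is, $\rk_\Ups(\cS) = \dim\z(\cS)$ for every $\cS \subseteq E_\infty$, where $\dim\z(\cS)$ is the dimension of the linear span of $\{\z(e): e \in \cS\}$. On the hyperplane side, $\h_\bbP(e)$ is by definition the projective hyperplane dual to the point $[\z(e)]$, with the conventions that a neutral loose edge (whose vector is $\0$) yields the degenerate ``hyperplane'' $\bbP^n(\K)$ and that $\h_\bbP$ need not be injective: parallel vectors give the same hyperplane, so $e_\infty$ together with every non-neutral loose edge all map to $h_\infty$, and all neutral loose edges map to the degenerate hyperplane. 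First I would observe that these coincidences are exactly the parallel classes and loops of $\M_\infty(\Ups)$ --- a non-neutral loose edge is parallel to $e_\infty$, a neutral loose edge is a matroid loop --- so $\h_\bbP$ induces a bijection of ground sets once one indexes the arrangement by $E_\infty$ (equivalently, passes to simple matroids), and it then suffices to check that the rank functions agree.

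For the rank comparison: for $\cS \subseteq E_\infty$, point--hyperplane duality in $\bbP^n(\K)$ gives $\bigcap_{e\in\cS}\h_\bbP(e) = \bbP\bigl((\operatorname{span}\{\z(e): e\in\cS\})^{\perp}\bigr)$, and therefore
$$ \rk_{\M(\cA_\bbP[\Ups])}(\cS) \;=\; \codim \bigcap_{e\in\cS}\h_\bbP(e) \;=\; \dim\operatorname{span}\{\z(e): e\in\cS\} \;=\; \rk_\Ups(\cS) $$
by Theorem~\ref{T:rank}. Hence $\h_\bbP$ carries $\M_\infty(\Ups)$ isomorphically onto $\M(\cA_\bbP[\Ups])$. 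Since $\cL(\cA_\bbP[\Ups])$ is, by construction, the lattice of flats of $\M(\cA_\bbP[\Ups])$ and $\Lat\M_\infty(\Ups)$ is the lattice of flats of $\M_\infty(\Ups)$, the matroid isomorphism immediately yields the lattice isomorphism $\cL(\cA_\bbP[\Ups]) \cong \Lat\M_\infty(\Ups)$; concretely a flat $F$ of $\M_\infty(\Ups)$ corresponds to the subspace $\bigcap_{e\in F}\h_\bbP(e)$, with inclusion of flats matching reverse inclusion of subspaces.

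For the affine semilattice I would use the standard fact that an intersection of projective hyperplanes meets the affine chart $\bbA^n(\K)$ if and only if it is not contained in $h_\infty$. Since $h_\infty = \h_\bbP(e_\infty)$, an intersection $\bigcap\cS$ belongs to $\cL(\cA[\Ups])$ --- i.e.\ is a nonempty affine flat --- exactly when the corresponding flat of $\M_\infty(\Ups)$ does not contain $e_\infty$, which by the remark following Theorem~\ref{T:closed} holds exactly when that flat is hyperbalanced; and by Theorem~\ref{T:closed} the hyperbalanced flats of $\M_\infty(\Ups)$ are precisely the elements of $\Latb\Ups$. Thus $\cL(\cA[\Ups])$ is the set of flats of the geometric lattice $\cL(\cA_\bbP[\Ups])$ not lying above the atom $h_\infty$, while $\Latb\Ups$ is the set of flats of $\Lat\M_\infty(\Ups)$ not lying above the atom generated by $e_\infty$; since the matroid isomorphism matches these two atoms (because $\h_\bbP(e_\infty) = h_\infty$), it restricts to the desired geometric-semilattice isomorphism $\cL(\cA[\Ups]) \cong \Latb\Ups$, matching the description of geometric semilattices recalled in Section~\ref{sec:projhyp}.

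The argument is largely bookkeeping once Theorem~\ref{T:rank} is in hand. The step I expect to be the main obstacle --- though it is not deep --- is getting the affine/projective boundary exactly right: pinning down the equivalences among ``$\bigcap\cS = \eset$ in $\bbA^n(\K)$'', ``$\bigcap\cS \subseteq h_\infty$ in $\bbP^n(\K)$'', ``the matroid flat contains $e_\infty$'', and ``the flat is hyperbalanced'', while simultaneously keeping track of the non-injectivity of $\h_\bbP$ on the parallel class of $e_\infty$ (the non-neutral loose edges and the phantom hyperplanes) and on the matroid loops (the neutral loose edges and the degenerate hyperplane), so that each of the three asserted isomorphisms is literally correct.
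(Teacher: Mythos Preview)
Your proposal is correct and follows exactly the paper's approach: the paper's entire proof is the single sentence ``This follows from Theorem~\ref{T:rank} by vector-space duality,'' and your writeup is a faithful and careful unpacking of precisely that duality argument, including the identification of the affine semilattice with the flats not above $e_\infty$.
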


\begin{proof}
This follows from Theorem \ref{T:rank} by vector-space duality.
\end{proof}

\subsection{Regions and polynomials}\

Consider a hyperplane arrangement $\cA$ in $\bbA^d(\K)$.  
The \emph{characteristic polynomial} of $\cA$ is 
\begin{align*}
p(\cA;\lambda) &:= \sum_{{\cS \subseteq \cA:}\,{\bigcap\cS \neq \eset}} (-1)^{|\cS|} \lambda^{\dim\bigcap\cS} 
= \sum_{s \in \cL(\cA)} \mu(\hat0,s) \lambda^{\dim s} ,
\end{align*}
where $\hat0 = \bbA^d(\K)$, the bottom element of $\cL(\cA)$, and $\mu$ is the M\"obius function.

We define the \emph{chromatic polynomial} of $\Ups$:
\begin{align*}
\chi_\Ups(\lambda) &:= \sum_{S \subseteq E:} (-1)^{|S|} \lambda^{n-\rk_\Ups S} 
= \sum_{S \in \Lat\Ups} \mu(\eset,S) \lambda^{n-\rk_\Ups S} ,
\end{align*}
which is interpreted as $0$ if $\Ups$ has any neutral loose edges.  A variant is
$\chi_{\Ups_\infty}(\lambda)$, which is $\chi_{\Ups\cup\{e_\infty\}}(\lambda)$ where $e_\infty$ is interpreted as a non-neutral loose edge.
The \emph{balanced chromatic polynomial} of $\Ups$ is
\begin{align*}
\chib_\Ups(\lambda) &:= \sum_{{S \subseteq E:}\,\text{hyperbalanced}} (-1)^{|S|} \lambda^{n-\rk_\Ups S} 
= \sum_{S \in \Latb\Ups} \mu(\eset,S) \lambda^{n-\rk_\Ups S} ,
\end{align*}
which also is $0$ if $\Ups$ has neutral loose edges.  
(Note that a graph has a chromatic polynomial, while an arrangement (or matroid) has a characteristic polynomial.  One expects a chromatic polynomial to count colorings, as is the case with signed graphs and gain graphs; we hope to present such an interpretation separately.)

\begin{lem}\label{L:polys}
Assuming that $\Ups$ has no neutral loose edges, we have
$p(\cA[\Ups];\lambda) = \chib_\Ups(\lambda)$
and 
$p(\cA_\infty[\Ups];\lambda) = \chi_{\Ups_\infty}(\lambda)$.
\end{lem}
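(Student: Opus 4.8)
The plan is to recognize both asserted identities as instances of the standard principle that the characteristic polynomial of a hyperplane arrangement equals the characteristic polynomial of its intersection (semi)lattice, and then to transport that polynomial across the isomorphisms of Theorem~\ref{T:hyperplanes}. I would work throughout with the Whitney--M\"obius (second) forms of all four polynomials: $p(\cA;\lambda)=\sum_{s\in\cL(\cA)}\mu(\hat0,s)\lambda^{\dim s}$, the displayed $\chib_\Ups(\lambda)=\sum_{S\in\Latb\Ups}\mu(\eset,S)\lambda^{n-\rk_\Ups S}$, and the analogous sums over $\cL(\cA_\infty[\Ups])$ and over $\Lat\M_\infty(\Ups)$; the last of these is exactly $\chi_{\Ups_\infty}$ since $\M(\Ups\cup\{e_\infty\})=\M_\infty(\Ups)$ and $e_\infty$ behaves as a non-neutral loose edge, so the chromatic-polynomial definition applies verbatim. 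Theorem~\ref{T:hyperplanes} supplies order isomorphisms $\cL(\cA[\Ups])\cong\Latb\Ups$ via $\h$ and $\cL(\cA_\infty[\Ups])=\cL(\cA_\bbP[\Ups])\cong\Lat\M_\infty(\Ups)$ via $\h_\bbP$, so it suffices to match summands term by term under these bijections.

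Two things must be checked for the matching. First, the M\"obius factors agree: the M\"obius function is a poset invariant (and the M\"obius function of a geometric semilattice coincides with that of the ambient geometric lattice on the intervals that occur, since every sub-flat of a hyperbalanced flat is hyperbalanced), so $\mu(\hat0,s)=\mu(\eset,S)$ once the bottom elements are seen to correspond. The bottom of $\cL(\cA[\Ups])$ is the whole ambient space $\bbA^n(\K)$ (the empty intersection), which under $\h$ maps to $\bcl_\Ups(\eset)=E^{00}$; the hypothesis that $\Ups$ has no neutral loose edges forces $E^{00}=\eset$, so the bottom corresponds to the rank-$0$ flat $\eset\in\Latb\Ups$ that indexes the lowest term of $\chib_\Ups$, and the same hypothesis is precisely what keeps $\chib_\Ups$ and $\chi_{\Ups_\infty}$ from being the degenerate $0$. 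The identical remark applies to $\cL(\cA_\infty[\Ups])$ and $\clos_\infty(\eset)$. Second, the dimension exponents agree: under $\h$, a flat $S\in\Latb\Ups$ corresponds to the affine flat $s=\bigcap_{e\in S}\h(e)$, and because $\h(e)$ has homogeneous coefficient vector $\z(e)$ (with the $x_0$-slot furnishing the constant term), consistency of this linear system---guaranteed by hyperbalance---gives $\dim s=n-\rk\{\z(e):e\in S\}=n-\rk_\Ups S$ by Theorem~\ref{T:rank}; likewise under $\h_\bbP$ a flat $S\in\Lat\M_\infty(\Ups)$ gives the projective flat $\bigcap_{e\in S}\h_\bbP(e)$ of projective dimension $n-\rk\{\z(e):e\in S\}=n-\rk_\Ups S$. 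With both the $\mu$-factors and the $\lambda$-exponents identified, the two sums coincide term for term, which is the lemma. (As a cross-check one may instead compare the first, subset-sum forms in the affine case: edges biject with the hyperplanes of $\cA[\Ups]$, so $|S|=|\cS|$, and $\bigcap\cS\ne\eset$ iff $S$ is hyperbalanced iff $e_\infty\notin\clos_\infty(S)$, which is exactly when $S$ contributes to $\chib_\Ups$.)

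The step I expect to require the most care is the dimension bookkeeping in the second identity---verifying that for $\cA_\infty[\Ups]$, the projectivization obtained by adjoining the ideal hyperplane $h_\infty=\h_\bbP(e_\infty)$, the exponent comes out to $n-\rk_{\M_\infty}S$ rather than $n+1-\rk_{\M_\infty}S$. This hinges on using projective dimension in $\bbP^n(\K)$ (so that the bottom $\bbP^n$ has dimension $n$) together with the identification $e_\infty\leftrightarrow h_\infty$, both of which are already built into Theorem~\ref{T:hyperplanes}; the only residual work is to say explicitly that the codimension of a projective flat equals the corresponding matroid rank, which is Theorem~\ref{T:rank} read through vector--hyperplane duality. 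Everything else is routine transport along the isomorphisms of Theorem~\ref{T:hyperplanes}.
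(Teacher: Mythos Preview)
Your proposal is correct and follows essentially the same route as the paper, which simply states that the lemma is a consequence of Theorem~\ref{T:hyperplanes} together with the semilattice (M\"obius) expressions for the polynomials. You have unpacked that one-line argument in detail: transporting the M\"obius function along the poset isomorphisms, matching $\dim s$ with $n-\rk_\Ups S$ via Theorem~\ref{T:rank}, and using the no-neutral-loose-edge hypothesis to ensure $E^{00}=\eset$ so that the bottom elements correspond; your flagged concern about the dimension bookkeeping for $\cA_\infty[\Ups]$ is exactly the only place requiring care, and you resolve it the same way the paper implicitly does, through the duality built into Theorem~\ref{T:hyperplanes}.
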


\begin{proof}
This is a consequence of Theorem \ref{T:hyperplanes} and the semilattice expressions for the polynomials.
\end{proof}

We are finally ready to count the regions of a real hyperplane arrangement described by a gain signed graph.

\begin{thm} \label{T:regions}
Consider the arrangements $\cA[\Ups]$ in $\bbA^n(\bbR)$ and $\cA_\infty[\Ups]$ in $\bbR^{1+n}$.

The number of regions of $\cA[\Ups]$ equals $(-1)^n \chib_\Ups(-1)$.  The number of bounded regions equals $(-1)^n \chib_\Ups(1)$.

The number of regions of $\cA_\infty[\Ups]$ equals $(-1)^{n+1} \chi_{\Ups_\infty}(-1)$.
\end{thm}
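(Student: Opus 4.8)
The plan is to read off all three counts from Zaslavsky's face-count theorem \cite{FUTA}, fed with the characteristic-polynomial identities already established in Lemma \ref{L:polys}. Recall Zaslavsky's theorem in the form needed: for a real hyperplane arrangement $\cA$ in a space of dimension $d$ (affine or linear), the number of regions of the complement equals $(-1)^d p(\cA;-1)$, and, in the affine case, the number of bounded regions equals $(-1)^d p(\cA;1)$, where $p(\cA;\lambda)$ denotes the characteristic polynomial of the intersection semilattice $\cL(\cA)$. So, with Lemma \ref{L:polys} in hand, the proof reduces to recording the ambient dimension in each case and substituting.

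First I would dispose of the degenerate case. If $\Ups$ has a neutral loose edge $e$, then $\z(e)=\0$, so $\h(e)$ is the whole ambient space; hence $\cA[\Ups]$ and $\cA_\infty[\Ups]$ have empty complement and therefore $0$ regions and $0$ bounded regions, while $\chib_\Ups$ and $\chi_{\Ups_\infty}$ vanish by definition, so every asserted equation reads $0=0$. We may therefore assume $\Ups$ has no neutral loose edge, which is precisely the hypothesis of Lemma \ref{L:polys}; the remaining non-neutral loose edges of $\Ups$ yield only phantom hyperplanes in the affine picture, affecting neither the regions nor $\cL(\cA[\Ups])$, consistent with their not contributing to $\chib_\Ups$.

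Next, $\cA[\Ups]$ lies in $\bbA^n(\bbR)$, so Zaslavsky's theorem with $d=n$ gives that $\cA[\Ups]$ has $(-1)^n p(\cA[\Ups];-1)$ regions and $(-1)^n p(\cA[\Ups];1)$ bounded regions; substituting $p(\cA[\Ups];\lambda)=\chib_\Ups(\lambda)$ from Lemma \ref{L:polys} yields the first two formulas. Likewise, $\cA_\infty[\Ups]$ is a central real arrangement in $\bbR^{1+n}$, so Zaslavsky's theorem with $d=1+n$ gives that it has $(-1)^{1+n}p(\cA_\infty[\Ups];-1)$ regions, and the second identity of Lemma \ref{L:polys} turns this into $(-1)^{n+1}\chi_{\Ups_\infty}(-1)$, the third formula. (Being central, $\cA_\infty[\Ups]$ has no bounded region, so there is nothing further to check there.)

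There is essentially no obstacle once Lemma \ref{L:polys} is available; the one point that calls for care is that the bounded-region half of Zaslavsky's theorem is the delicate one. It counts literally bounded regions only when $\cA[\Ups]$ is essential (equivalently $\rk_\Ups(E)=n$, which fails, e.g., when $\Ups$ has an isolated vertex); in general ``bounded'' must be taken in the sense of \cite{FUTA} appropriate to a non-essential arrangement. I would invoke the theorem in that proper form, so that the claimed value $(-1)^n\chib_\Ups(1)$ is exactly Zaslavsky's formula with $d=n$ and $p=\chib_\Ups$, with no further computation needed.
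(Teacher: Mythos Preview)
Your proof is correct and follows essentially the same route as the paper: apply Zaslavsky's region-counting theorems from \cite{FUTA} in the appropriate ambient dimensions and then substitute using Lemma~\ref{L:polys}. You give more detail than the paper (the degenerate neutral-loose-edge case and the caveat about ``bounded'' for non-essential arrangements), but the argument is the same.
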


\begin{proof}
Special cases of \cite[Theorem A]{FUTA}, which applies to all real affine hyperplane arrangements, are that $\cA_\infty[\Ups]$ has $(-1)^{n+1} p(\cA_\infty[\Ups];-1)$ regions and $\cA[\Ups]$ has $(-1)^n p(\cA[\Ups];-1)$ regions.  
A special case of \cite[Theorem C]{FUTA} is that $\cA[\Ups]$ has $(-1)^n p(\cA[\Ups];1)$ bounded regions.  Applying Lemma \ref{L:polys} gives the theorem.
\end{proof}

\sectionpage
\section{Abstract abelian gains}\label{sec:abstract}

The theory we developed for an additive group of a field largely applies to any abelian gain group, except of course for the vector representation.  We demonstrate that here by presenting a purely combinatorial proof that $\M(\Ups)$ and $\M_\infty(\Ups)$ are matroids.  All the analysis in the preceding sections applies without change except that $\fG^n$ may not be a vector space.

We had hoped to treat general groups, not necessarily abelian, in which the additive definitions become multiplicative and multipliers become exponents, but the failure of commutativity led to the failure of so many properties from Section \ref{sec:tech} that we were unable even to establish that neutrality of sign circuits was independent of the method of calculating the gain.  Thus, we leave open the problem of dealing with nonabelian gain groups.

An \emph{abelian gain signed graph} is a triple $\Ups = (\G,\s, \phi)$ where $\G=(V,E)$ is a graph, the \emph{signature} or sign function $\s$ gives each edge an element of the sign set $\{+1,-1\}$, and the \emph{gain function} $\phi: \vec{E} \to \fG$ is an oriented labelling of edges from an abelian group $\fG$, which we write additively so that, for instance, $\phi(e\inv) = -\phi(e)$.  All the formulas and results of Section \ref{sec:tech} remain valid if we replace the group $\K^+$ by $\fG$.

We define rank exactly as in Equation \eqref{E:rank}.  
The task is to prove this is a matroid rank function without using a representation.  Instead, we use the fact that $\bfF(\S)$ is known to be a matroid.  (A side effect of this proof is that Theorem \ref{T:rank} becomes a proof that $\z$ is a module representation of $\M(\Ups)$ even when $\fG$ is not the additive group of a field.)  As before, we can treat the extra point $e_\infty$ as if it were a non-neutral half edge, so we are simultaneously proving that $\M_\infty(\Ups)$ is a matroid.

\begin{thm}\label{T:matroid}
The function $\rk_\Ups$ is a matroid rank function on $E(\Ups)$ and $E_\infty(\Ups)$.
\end{thm}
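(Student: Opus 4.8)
The plan is to verify directly that $\rk_\Ups$ satisfies the three axioms of a matroid rank function: (R1) $0 \le \rk_\Ups(S) \le |S|$; (R2) $S \subseteq T \Rightarrow \rk_\Ups(S) \le \rk_\Ups(T)$; and (R3) $\rk_\Ups(S \cup T) + \rk_\Ups(S \cap T) \le \rk_\Ups(S) + \rk_\Ups(T)$. The only external input is that $\rk_\S = n - b_\S$ is the rank function of the matroid $\bfF(\S)$, together with the gain-additivity facts of Section~\ref{sec:tech} (in particular Theorem~\ref{L:hyperbalneutral}). Since adjoining $e_\infty$ is the same as adjoining a non-neutral loose edge, it suffices to prove this for $\M(\Ups)$ with $\Ups$ arbitrary; $\M_\infty(\Ups)$ then follows at once. (One could instead invoke the general theory of elementary lifts, with $\bfF(\S)$ as base matroid and the neutral sign circuits as the distinguished linear subclass of circuits; the direct verification below makes no such appeal.)

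The structural fact driving the easy axioms is that hyperfrustration passes to supersets: if $S \subseteq T$ and $S$ contains a non-neutral sign circuit, so does $T$. From this, (R1): if $S$ is hyperfrustrated it contains a circuit of $\bfF(\S)$, so $\rk_\S(S) \le |S| - 1$ and $\rk_\Ups(S) = \rk_\S(S) + 1 \le |S|$, while if $S$ is hyperbalanced $\rk_\Ups(S) = \rk_\S(S) \le |S|$; nonnegativity is immediate. For (R2), write $\rk_\Ups = \rk_\S + \delta_\Ups$: monotonicity of $\rk_\S$ together with the fact that for $S \subseteq T$ one cannot have $\delta_\Ups(S) = 1$ and $\delta_\Ups(T) = 0$ settles every case.

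For (R3), again write $\rk_\Ups = \rk_\S + \delta_\Ups$ and use submodularity of $\rk_\S$; it then suffices to prove
$$
\delta_\Ups(S \cup T) + \delta_\Ups(S \cap T) - \delta_\Ups(S) - \delta_\Ups(T) \;\le\; \Delta_\S(S,T),
$$
where $\Delta_\S(S,T) := \rk_\S(S) + \rk_\S(T) - \rk_\S(S \cup T) - \rk_\S(S \cap T) = b_\S(S \cup T) + b_\S(S \cap T) - b_\S(S) - b_\S(T) \ge 0$. Since $S \cap T \subseteq S$, hyperbalance of $S$ (or of $T$) forces hyperbalance of $S \cap T$; since $S \subseteq S \cup T$, hyperfrustration of $S$ (or of $T$) forces hyperfrustration of $S \cup T$. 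A short check of the four cases for $(\delta_\Ups(S),\delta_\Ups(T))$ shows the left-hand side is $\le 0$ in every case \emph{except} when $S$ and $T$ are both hyperbalanced while $S \cup T$ is hyperfrustrated, where it equals exactly $1$ (and there $\delta_\Ups(S \cap T) = 0$). Thus everything reduces to the \textbf{Key Lemma}: if $S$ and $T$ are hyperbalanced but $S \cup T$ is hyperfrustrated, then $\Delta_\S(S,T) \ge 1$; equivalently, the union of a modular pair of hyperbalanced edge sets is hyperbalanced.

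The Key Lemma is the main obstacle, and I would attack it by a gauge argument. After switching gains (via Theorem~\ref{L:hyperbalneutral}) so that every edge of $S$ is neutral, a gain potential for $S$ is precisely a vertex function that is constant on each sign-balanced component of $\S|S$ and zero on each sign-unbalanced one, while $T$ still carries some gain potential $p$. To neutralize all of $S \cup T$ one needs a single switching whose associated potential is admissible for both $S$ and $T$, i.e. one must write $p = h_S - h_T$ with $h_S$ admissible for $S$ and $h_T$ admissible for $T$; a solution makes $S \cup T$ all-neutral, hence hyperbalanced, contrary to hypothesis. The obstruction to solving this system lives on the components of $\S|(S \cap T)$ and on the incidence pattern by which the sign-balanced components of $\S|S$, $\S|T$ and $\S|(S \cap T)$ overlap, and the modular-pair identity $b_\S(S) + b_\S(T) = b_\S(S \cup T) + b_\S(S \cap T)$ is exactly what forces that pattern to be acyclic enough for the gluing to go through — so hyperfrustration of $S \cup T$ can occur only when that identity fails. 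Making this counting rigorous, and (for a general abelian gain group rather than a field of characteristic $\neq 2$) handling the $2$-torsion that enters around negative circles, is where the real work lies. Once the Key Lemma is established, (R3) and hence the theorem follow; as the text observes, the same argument simultaneously shows that $\z$ represents $\M(\Ups)$ over an arbitrary abelian group.
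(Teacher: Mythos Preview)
Your reduction of global submodularity to the Key Lemma is correct, but the Key Lemma itself is left as a sketch --- you explicitly flag the component-gluing combinatorics and the 2-torsion issue as ``where the real work lies,'' and you do not carry them out. That is a genuine gap: as stated, your Key Lemma (a modular pair of hyperbalanced sets has hyperbalanced union) is essentially as hard as the theorem.

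The paper avoids this difficulty by proving submodularity in its \emph{local} form R3$'$ (if $\rk(S\cup\{e\})=\rk(S\cup\{f\})=\rk S$ then $\rk(S\cup\{e,f\})=\rk S$) rather than the global inequality you chose. The only nontrivial case is $S$ hyperbalanced. Switch so every edge of $S$ is neutral; the rank hypothesis together with Lemma~\ref{L:addedgebf} forces $S\cup\{e\}$ and $S\cup\{f\}$ to be hyperbalanced and $e,f\in\clos_\S S$, so each of $e,f$ lies in a neutral sign circuit all of whose other edges already have gain~$0$. Reading off the circuit-gain formula gives $\phi(e)=\phi(f)=0$, so $S\cup\{e,f\}$ is all-neutral, hence hyperbalanced, and R3$'$ for $\rk_\Ups$ reduces to R3$'$ for $\rk_\S$. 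This is exactly your Key Lemma in the degenerate situation where $S$ and $T$ each differ from $S\cap T$ by a single edge --- precisely the case in which your ``write $p=h_S-h_T$'' gluing problem becomes trivial (one new edge, one circuit equation, no component bookkeeping). Choosing R3$'$ over R3 is what turns the argument from a project into a few lines; if you want to salvage your route, the cleanest fix is simply to switch to R3$'$ at the outset.
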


\begin{proof}
We establish the fundamental properties of a matroid rank function on $E$, which are:
\begin{enumerate}[\quad R1.]
\item \emph{Normalization}:  $\rk(\eset) = 0$.
\item \emph{Unit Increase}:  If $e \notin S \subset E$, then $\rk S \leq \rk(S \cup \{e\}) \leq \rk S + 1$.
\item \emph{Submodularity}:  If $e,f \notin S \subset E$, then 
$$\rk(S \cup \{e\}) - \rk S \geq \rk(S \cup \{e,f\}) - \rk(S \cup \{f\}).$$
\end{enumerate}

R1 is obvious.

For R2, what needs to be proved is that
$$
\rk_\S S + \delta_\Ups(S) \leq \rk_\S(S \cup \{e\}) + \delta_\Ups(S \cup \{e\}) \leq \rk_\S S + \delta_\Ups(S) + 1.
$$
The left-hand inequality is true because $\rk_\S$ and $\delta_\Ups$ are weakly increasing.  
The right-hand inequality is true by property R2 of $\rk_\S$ if $\delta_\Ups(S) = \delta_\Ups(S \cup \{e\})$, so assume $S$ is hyperbalanced and $S \cup \{e\}$ is hyperfrustrated.  Then $\rk_\S(S \cup \{e\}) = \rk_\S S$ by Lemma \ref{L:addedgebf}.  That proves R2.

\begin{lem}\label{L:addedgebf}
Suppose $S \subset E$ is hyperbalanced, $e \in E$, and $S \cup \{e\}$ is hyperfrustrated.  Then $e \in \clos_\S S$, $rk_\S(S \cup \{e\}) = \rk_\S S$, and $rk_\Ups(S \cup \{e\}) = \rk_\Ups S + 1$.
\end{lem}

\begin{proof}
There is a non-neutral sign circuit $C \subseteq S \cup \{e\}$ that contains $e$.  Since $C$ exists, $e \in \clos_\S S$, so $\rk_\S(S \cup \{e\}) = \rk_\S S$.  Then $\rk_\Ups(S \cup \{e\})$ follows from Definition \ref{D:rank}.
\end{proof}

To prove R3 we use a simpler equivalent form \cite[Theorem 1.4.14]{Oxley}:
\begin{enumerate}[\quad R1.]
\item[R3$'$.] If $e,f \notin S \subset E$ and $\rk(S \cup \{e\}) = \rk(S \cup \{f\}) = \rk S$, then $\rk(S \cup \{e,f\}) = \rk S.$
\end{enumerate}
Applying this to $\rk_\Ups$, if all the sets in this formula are hyperbalanced, or all are hyperfrustrated, then $\delta_\Ups$ drops out and we have a known property of $\rk_\S$.  If $S$ is hyperbalanced, by Theorem \ref{L:hyperbalneutral} we may assume every edge in $S$ is neutral.  Lemma \ref{L:addedgebf} implies that $e, f \in \clos_\S S$ and also that $S \cup \{e\}$ and $S \cup \{f\}$ are hyperbalanced.  Thus, there exist sign circuits $C_e$ and $C_f$ such that $e \in C_e \subseteq S \cup \{e\}$ and $f \in C_f \subseteq S \cup \{f\}$, both of which are neutral.  Therefore $e$ and $f$ are neutral edges, so $S \cup \{e,f\}$ is hyperbalanced and R3$'$ is proved.
\end{proof}

As we proved the matroid properties subsequent to Section \ref{sec:rank} without reference to the vector representation, they are all true in general with suitably adapted notation and the same proofs.

\sectionpage
\section*{Data statement}

There are no associated data.


\end{document}